\newcommand{\be}{\begin{equation}}
\newcommand{\fonedown}{f_{_{1}}^}
\newcommand{\fzerodown}{f_{_{0}}^}
\newcommand{\wtil}{\widetilde}
\newcommand{\inner}{\innerr_{L^2(h)}}
\newcommand{\ee}{\end{equation}}
\newcommand{\bea}{\begin{eqnarray}}
\newcommand{\eea}{\end{eqnarray}}
\newcommand{\bean}{\begin{eqnarray*}}
\newcommand{\eean}{\end{eqnarray*}}
\newcommand{\brray}{\begin{array}}
\newcommand{\erray}{\end{array}}
\newcommand{\ben}{\begin{equation}{nonumber}}
\newcommand{\een}{\end{equation}{nonumber}}
\newtheorem{dfn}{Definition}[section]
\newtheorem{thm}[dfn]{Theorem}
\newtheorem{lmma}[dfn]{Lemma}
\newtheorem{ppsn}[dfn]{Proposition}
\newtheorem{crlre}[dfn]{Corollary}
\newtheorem{xmpl}[dfn]{Example}
\newtheorem{rmrk}[dfn]{Remark}
\newcommand{\bdfn}{\begin{dfn}}
\newcommand{\bthm}{\begin{thm}}
\newcommand{\ota}{\ot_{alg}}
\newcommand{\blr}{\begin{list}{$($\roman{cnt1}$)$} {\usecounter{cnt1}
        \setlength{\topsep}{0pt} \setlength{\itemsep}{0pt}}}
\newcommand{\bla}{\begin{list}{$($\alph{cnt2}$)$} {\usecounter{cnt2}
       \setlength{\topsep}{0pt} \setlength{\itemsep}{0pt}}}
\newcommand{\bln}{\begin{list}{$($\arabic{cnt3}$)$} {\usecounter{cnt3}
                \setlength{\topsep}{0pt} \setlength{\itemsep}{0pt}}}
\newcommand{\el}{\end{list}}
\newcommand{\blmma}{\begin{lmma}}
\newcommand{\bppsn}{\begin{ppsn}}
\newcommand{\bcrlre}{\begin{crlre}}
\newcommand{\bxmpl}{\begin{xmpl}}
\newcommand{\brmrk}{\begin{rmrk}}
\newcommand{\edfn}{\end{dfn}}
\newcommand{\ethm}{\end{thm}}
\newcommand{\elmma}{\end{lmma}}
\newcommand{\till}{\widetilde{\cll}}
\newcommand{\eppsn}{\end{ppsn}}
\newcommand{\ecrlre}{\end{crlre}}
\newcommand{\exmpl}{\end{xmpl}}
\newcommand{\ermrk}{\end{rmrk}}
\newcommand{\IC}{\mathbb{C}}
\newcommand{\IE}{{I\! \! E}}
\newcommand{\IN}{{I\! \! N}}
\newcommand{\IP}{{I\! \! P}}
\newcommand{\IQ}{\mathbb{Q}}
\newcommand{\IR}{\mathbb{R}}
\newcommand{\IT}{\mathbb{T}}
\newcommand{\IZ}{\mathbb{Z}}
\newcommand{\innerl}{\left\langle}
\newcommand{\innerr}{\right\rangle}
\newcommand{\cla}{{\cal A}}
\newcommand{\clb}{{\cal B}}
\newcommand{\clc}{{\cal C}}
\newcommand{\cld}{{\cal D}}
\newcommand{\clf}{{\cal F}}
\newcommand{\clg}{{\cal G}}
\newcommand{\clh}{{\cal H}}
\newcommand{\clj}{{\cal J}}
\newcommand{\clk}{{\cal K}}
\newcommand{\cll}{{\cal L}}
\newcommand{\clm}{{\cal M}}
\newcommand{\clp}{{\cal P}}
\newcommand{\clq}{{\cal Q}}
\newcommand{\clv}{{\cal V}}
\newcommand{\clw}{{\cal W}}
\newcommand{\clx}{{\cal X}}
\def\wA{\widetilde{A}}
\def\wH{\widetilde{H}}
\def\a*{{\cal A}_{h,*}}
\def\B{{\cal B}(h)}
\def\B1{{\cal B}_1(h)}
\def\b{{\cal B}^{\rm s.a.}(h)}
\def\b1{{\cal B}^{\rm s.a.}_1(h)}
\newcommand{\ot}{\otimes}
\begin{document}
\begin{center}
\Large{\bf{Quantum Brownian Motion on noncommutative manifolds:}}\\
\Large{\bf {construction, deformation and exit times.}}\\
\vspace{0.15in}
{\large Biswarup Das{ \footnote{Indian Statistical Institute}}}\\
{\large Debashish Goswami {\footnote {Indian Statistical Institute}}}
\end{center}

\vspace{0.15in}

\begin{abstract}
We begin with a review and analytical construction of quantum Gaussian process (and quantum Brownian motions)  in the sense of \cite{schurmann1},\cite{franz1} and others, and then formulate and study in details (with a number of interesting examples) a definition of quantum Brownian motions on those noncommutative manifolds (a la Connes) which are quantum homogeneous spaces of their quantum isometry groups in the sense of \cite{goswami}. We prove that bi-invariant quantum Brownian motion can be `deformed' in a suitable sense. Moreover, we propose a noncommutative analogue of the well-known asymptotics of the exit time of classical Brownian motion. We explicitly analyze such asymptotics for a specific example on noncommutative two-torus $\cla_\theta$, which seems to  behave like a one-dimensional manifold, perhaps reminiscent of the fact that  $\cla_\theta$ is a noncommutative model of  the (locally one-dimensional) `leaf-space' of the Kronecker foliation.
\end{abstract}
\section{Introduction}
There is a very interesting confluence of Riemannian geometry and probability theory in the domain of (classical) stochatistic geometry. The  role of the Brownian motion on a Riemannian manifold cannot be over-estimated in this context; in fact, classical stochastic geometry is almost synonimous with the analysis of Brownian motion on manifolds. Since the inception of the quantum or noncommutative  analogues of Riemannian geometry and the theory of stochastic processes few dacades ago, in the name of noncommutative geometry ( a la Connes) and quantum probability respectively, it has been a natural problem to explore the possibility of interaction and confluence of them.  However, there is not really much work in this direction yet. In \cite{dgkbs}, some case-studies have been made but no general theory was really formulated. The aim of the present paper is to formulate at least some general principle of quantum stochastic geometry using a quantum analogue of Brownian motion on homogeneous spaces. 

The first problem in this context is a suitable noncommutative generalization of Brownian motion, or somewhat more generaly, quantum diffusion or Gaussian processes on manifolds. In the theory of Hudson-Parthasarathy quantum stochastic analysis, a quantum stochastic flow is thought of as  (quantum) diffusion or Gaussian if its quantum stochastic flow equation does not have any `Poisson' or `number' coefficients (see 
\cite{krp}, \cite{dgkbs} and references therein for details). An important question in this context is to characterize the quantum dynamical semigroups which arise as the vacuum expectation semigrooups of quantum Gaussian processes or quantum Brownian motions. In the classical case, such criteria formulated in terms of the `locality' of the generator are quite well-known. However, there is no such intrinsic characterization in the general noncommutative framework, except  a few partial results, e.g. \cite[p.156-160]{dgkbs}, valid only for type I algebras. 

On the other hand, in the algebraic theory of quantum Levy processes  a la Schuermann et al, there are simple and easily verifiable necessary and sufficient conditions for a quantum Levy process on a  bialgebra to be of Gaussian type. This means, in some sense, we have a better understanding of quantum Gaussian processes on quantum groups. On the other hand, for any Riemannian manifold $M$, the group of Riemannian isometries $ISO(M)$ is a Lie group, and Gaussian processes or Brownian motions on the group of isometries induces similar processes on the manifold. For a compact Riemannian manifold the canonical Brownian motion genearted by the (Hodge) Laplacian arises in this way from a bi-invariant Brownian motion on $ISO(M)$. Moreover,  whenever $ISO(M)$ acts transitively on $M$, i.e. when $M$ is a homogeneous space for $ISO(M)$, any covariant Brownian motion does arise from a bi-invariant Brownian motion on $ISO(M)$. All these facts suggest that an extension of the framework of Schuermann et al to quantum homogeneous spaces is called for, and this is indeed one of the objectives of the present article. We also treat these concepts from an analytical viewpoint, realizing quantum Gaussian processes and quantum Brownian motions as bounded operator valued quantum stochastic flows. We then make use of the quantum isometry groups (recently developed by the second author and his collaborators, see, e.g. \cite{goswami,jyoti,dg-banica,jyoti-dg}) of  noncommutative manifolds described by spectral triples and define (and study) quantum Gaussian process or quantum Borwnian motion on those noncommutative manifolds which which are `quantum homogeneous spaces' for their quantum isometry groups.

For constructing interesting noncommutative examples, we investigate the problem of `deforming' quantum Gaussian processes in the framwork of Rieffel (\cite{rieffel}), and prove in particular that any bi-invariant quantum Gaussian process can indeed be deformed. This has helped us to explicitly describe all the Gaussian generators for certain interesting noncommutative manifolds. 
Finally, using our formulation of quantum Brownian motion on noncommutative manifolds,  we propose an analogue of  the classical results about the asymptotics of exit time of Brownian motion from a ball of small volume (see, for example,\cite{pinsky}). We carry it out explicitly for noncommutative two-torus, and obtain quite remarkable results. The asymptotic behaviour in fact differs sharply from the commutative torus, and resembles the asymptotics of a one-dimensional manifold, which is perhaps in agreement with the fact that the noncommutative two-torus is a model for the `leaf space' of the Kronecker foliation, and this `leaf space' is locally (i.e. restricted to a foliation chart) is one dimensional.

\section{Preliminaries}
\subsection{Brownian Motion on Classical Manifolds and Lie-groups}\label{Classical Brownian motion}
Let $M$ be a compact Riemannian manifold of dimension $d,$ equipped with the Riemannian metric $\innerl\cdot,\cdot\innerr.$ Let $Exp_x:T_xM\rightarrow M$ denote the Riemannian exponential map, given by $Exp_x(v)=\gamma(1),$ where $v\in T_xM$ and\\ $\gamma:[0,1]\rightarrow M$ is the geodesic such that $\gamma(0)=x,\gamma^\prime(0)=v.$ The Laplace-Beltrami operator $\Delta$ on $M$ is defined by:
\begin{equation}
\Delta f(x):=\sum_{i=1}^{d}\frac{d^2}{dt^2}f\left(Exp_x(tY_i)\right)|_{_{t=0}}~,
\end{equation}
where $f\in C^2(M)$ and $\{Y_i\}_{i=1}^d$ is a set of complete orthonormal basis of $T_xM.$ This definition is independent of the choice of orthonormal basis of $T_xM.$ If $x_1,x_2,...x_d$ be a local chart at $x,$ then writing 
$\partial_i$ for $\frac{\partial}{\partial x_i},$ $\Delta$ can be written as:
\begin{equation}
\Delta f(x)=\sum_{i,j=1}^d g^{ij}(x)\partial_j\partial_kf(x)-\sum_{i,j,k=1}^dg^{jk}(x)\Gamma^i_{jk}(x)\partial_if(x),
\end{equation}
where $(g^{jk})=(g_{jk})^{-1},$ $g_{jk}(x):=\innerl\partial_j,\partial_k\innerr_x,$ and $\Gamma^i_{jk}$ are the Christoffel symbols.
\bdfn
The Hodge Laplacian on $C^\infty(M)$ is the elliptic differential operator defined in terms of local coordinates $(x_1,x_2,...x_n)$ as:
\begin{equation*}
\Delta_0f=-\frac{1}{\sqrt{det(g)}}\sum_{i,j=1}^{n}\frac{\partial}{\partial x_j}(g^{ij}\sqrt{det(g)}\frac{\partial}{\partial x_i}f),
\end{equation*}
where $f\in C^\infty(M)$ and $g\equiv((g_{ij})).$
\edfn
It may be noted that the Hodge Laplacian on $M$ and the Laplace-Beltrami operator both has similar second order terms and in case $M=\IR^d,$ they coincide. 

It is well known that a standard $d$-dimensional Brownian Motion on $\IR^d$ has the Hodge Laplacian as its generator. An $M$ valued Markov process $X_t^m:(\Sigma,\clf,P)\rightarrow M$ will be called a diffusion process starting at $m\in M$ if $X_0^m=m$ and the generator of the process, say $L,$ when restricted to $C_c^\infty(M)$ will be a second order elliptic differential operator i.e. 
\begin{equation*}
Lf(x)=\sum_{i,j=1}^da_{ij}(x)\partial_i\partial_jf(x)+\sum_{i=1}^db_i(x)\partial_if(x),
\end{equation*}
where $((a_{ij}(\cdot)))$ is a nonsingular positive definite matrix. We will sometimes use the term {\it Gaussian process} for such a Markov process. The diffusion process will be called a Riemannian Brownian motion, if $L$ restricted to $C_c^\infty(M)$ is the Hodge Laplacian restricted to the $C^\infty(M).$ 
\begin{rmrk}
It may be noted that the standard text books e.g. \cite{stroock,mingliao} refer to a Markov process as a Riemmanian Brownian motion if its generator is a Laplace-Belatrami operator. We differ from this usual convention. However our convention will agree with the usual convention in context of symmetric spaces as will be explained later.
\end{rmrk}
The Markov semigroup associated with standard Brownian motion,  given by \\$(T_tf)(m)=\IE(f(X_t^m))$ is called the heat-semigroup. The Brownian Motion gives a ``stochastic dilation" of the heat semigroup.

\paragraph{}
Diffusion processes on classical manifolds are important objects of study as many geometrical invariants can be obtained by analyzing the exit time of the motion from suitably chosen bounded domains. For example,
\begin{ppsn}\cite{pinsky}\label{pinsky}
Consider a hypersurface $M\subseteq\IR^d$ with the Brownian motion process $X_t^m$ starting at $m.$ Let $T_\varepsilon=inf\{t>0:$ $\|X_t^m-m\|=\varepsilon\}$ be the exit time of the motion form an extrinsic ball of radius $\varepsilon$ around $m.$ Then we have $$\IE_m(T_\varepsilon)=\varepsilon^2/2(d-1)+\varepsilon^4H^2/ 8(d+1)+O(\varepsilon^5),$$ where $H$ is the mean curvature of $M.$ 
\end{ppsn}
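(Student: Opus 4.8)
The plan is to reduce the claim to a singularly perturbed Dirichlet problem on a small domain and to expand its solution at the base point $m$. First, by Dynkin's formula the function $u_\varepsilon(x):=\IE_x(T_\varepsilon)$ is the unique classical solution of $Lu_\varepsilon=-1$ on the extrinsic ball $D_\varepsilon:=\{x\in M:\|x-m\|<\varepsilon\}$ with $u_\varepsilon=0$ on $\partial D_\varepsilon$, where $L$ is the generator of the Brownian motion (finiteness of $T_\varepsilon$ and the required regularity being standard, since $L$ is uniformly elliptic on the relatively compact $D_\varepsilon$); hence it suffices to find the asymptotics of $u_\varepsilon(m)$ as $\varepsilon\to0$.

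Next I would introduce coordinates adapted to the geometry. Since $M$ is a hypersurface, write it near $m$ as a graph $x=(y,\phi(y))$, $y\in\IR^{d-1}$, over the tangent hyperplane $T_mM$, with $\phi(0)=0$, $\nabla\phi(0)=0$; rotating the $y$-coordinates to diagonalize the Hessian of $\phi$ at $0$ gives $\phi(y)=\tfrac{1}{2}\sum_{i=1}^{d-1}\kappa_iy_i^2+O(|y|^3)$, where $\kappa_1,\dots,\kappa_{d-1}$ are the principal curvatures at $m$ and $H$ the associated mean curvature. From the induced metric $g_{ij}=\delta_{ij}+(\partial_i\phi)(\partial_j\phi)$ one gets $g^{ij}=\delta_{ij}-\kappa_i\kappa_jy_iy_j+O(|y|^3)$ and $\sqrt{\det g}=1+\tfrac{1}{2}\sum_i\kappa_i^2y_i^2+O(|y|^3)$, so $L=\Delta_0+L_1$ with $\Delta_0:=\sum_{i=1}^{d-1}\partial_i^2$ and $L_1$ a second-order operator whose second-order coefficients are $O(|y|^2)$ and whose first-order coefficients are $O(|y|)$. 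Likewise, from $\|x-m\|^2=|y|^2+\phi(y)^2$ one sees that $\partial D_\varepsilon$ is the coordinate sphere $\{|y|=\varepsilon\}$ deformed radially by $O(\varepsilon^3)$, the relevant leading piece being even in the angular variable and quadratic in the $\kappa_i$.

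Then I would solve the Dirichlet problem perturbatively. Rescaling $y=\varepsilon w$ carries $D_\varepsilon$ to a domain $\Omega_\varepsilon$ with boundary $\{|w|=1+O(\varepsilon^2)\}$ and turns $Lu_\varepsilon=-1$ into $(\Delta_0+\varepsilon^2A_2+\varepsilon^3A_3+\cdots)v=-\varepsilon^2$ on $\Omega_\varepsilon$, where $v(w):=u_\varepsilon(\varepsilon w)$, $A_2$ is a fixed operator with quadratic coefficients in $w$ assembled from the $\kappa_i\kappa_j$-terms of $g^{ij}$, $\sqrt{\det g}$ and the first-order part of $L$, and $A_3$ (from the cubic part of $\phi$) has coefficients odd in $w$; straightening $\Omega_\varepsilon$ to the unit ball $B_1$ by a radial diffeomorphism only adds further $O(\varepsilon^2)$ (and angularly-odd $O(\varepsilon^3)$) perturbations. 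Expanding $v=\varepsilon^2v_0+\varepsilon^4v_1+\cdots$, the order-$\varepsilon^2$ equation $\Delta_0v_0=-1$ in $B_1$, $v_0|_{\partial B_1}=0$, gives $v_0(w)=\dfrac{1-|w|^2}{2(d-1)}$ and $v_0(0)=\dfrac{1}{2(d-1)}$, whence the leading term; the order-$\varepsilon^4$ equation $\Delta_0v_1=-A_2v_0$ in $B_1$, with boundary values supplied by the $O(\varepsilon^2)$ part of the domain deformation, determines $v_1$, and since the Green and Poisson kernels of $B_1$ based at its centre are rotationally invariant, $v_1(0)$ sees only the spherical averages of $A_2v_0$ and of those boundary values; the elementary spherical integrals collapse the $\kappa_i\kappa_j$-combinations into $H^2$ times $\dfrac{1}{8(d+1)}$. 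Every angularly-odd contribution --- from $A_3$ and from the $O(\varepsilon^3)$ boundary deformation --- has zero spherical average and so does not affect the value at $m$; this, together with $\nabla\phi(0)=0$ (which rules out an $\varepsilon^3$ term), makes the $O(\varepsilon^5)$ correction vanish, so the error is in fact $O(\varepsilon^6)$ and a fortiori the stated $O(\varepsilon^5)$. A routine remainder bound --- Schauder estimates or the maximum principle on the fixed domain $B_1$ for operators with coefficients smooth in $\varepsilon$ --- makes all of this rigorous and yields $u_\varepsilon(m)=\dfrac{\varepsilon^2}{2(d-1)}+\dfrac{\varepsilon^4H^2}{8(d+1)}+O(\varepsilon^5)$.

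The delicate part is the second-order ($\varepsilon^4$) bookkeeping: one must simultaneously track the first-order (Christoffel/drift) part of $L$, the deformation of the extrinsic ball away from the coordinate ball, and the straightening diffeomorphism, and then verify that the quadratic curvature terms they produce genuinely recombine into the invariant $H^2$ with exactly the constant $1/8(d+1)$, since an error in any single contribution would shift this coefficient. The parity argument at $m$, responsible for the clean error term, also needs care: it relies on using the curvature-diagonalizing chart centred precisely at $m$ and on the rotational symmetry of the kernels of $B_1$ at its centre.
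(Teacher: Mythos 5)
The paper itself contains no proof of this proposition: it is quoted from Pinsky's paper and only used later, so there is no internal argument to compare yours with, and your proposal has to stand on its own as a proof of the quoted asymptotics. Your reduction is the standard and correct route: Dynkin's formula turning $\IE_x(T_\varepsilon)$ into the solution of $Lu_\varepsilon=-1$ with zero boundary data on the extrinsic ball, graph coordinates over $T_mM$ with the Hessian of $\phi$ diagonalized, the rescaling $y=\varepsilon w$, and the observation that at the centre of the unit ball only rotationally averaged data survive; the leading term $\varepsilon^2/2(d-1)$ and the parity argument disposing of the odd-order contributions are fine.

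The genuine gap is at the only place where the proposition has real content: the $\varepsilon^4$ coefficient. You assert that ``the elementary spherical integrals collapse the $\kappa_i\kappa_j$-combinations into $H^2$ times $1/8(d+1)$'', but that is precisely the statement to be proved, and it is not a formality. The separate contributions --- from $g^{ij}=\delta_{ij}-\kappa_i\kappa_j y_iy_j+O(|y|^3)$, from the drift produced by $\sqrt{\det g}$, from the radial shift of the extrinsic sphere, $|w|=1-\tfrac{\varepsilon^2}{8}\bigl(\sum_i\kappa_iw_i^2\bigr)^2+O(\varepsilon^3)$, and from the straightening diffeomorphism --- each generate spherical averages of both $(\sum_i\kappa_i)^2$-type and $\sum_i\kappa_i^2$-type, and the formula holds only because the $\sum_i\kappa_i^2$ (equivalently, scalar-curvature) terms cancel in the total, leaving the single invariant $H^2$ with the exact constant $1/8(d+1)$ (and with $H$ normalized as the trace $\sum_i\kappa_i$ of the second fundamental form; a different normalization changes the constant). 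Until you write $A_2$ and the boundary correction explicitly, apply the Green/Poisson representation at the centre, and actually carry out the averages of $w_i^2$ and $w_i^2w_j^2$ over the sphere, the argument assumes the conclusion at its decisive step rather than deriving it --- as you yourself flag in the closing paragraph. A useful check once you do the bookkeeping: for the unit circle in $\IR^2$ one has $\IE_m(T_\varepsilon)=2\arcsin^2(\varepsilon/2)=\varepsilon^2/2+\varepsilon^4/24+O(\varepsilon^6)$, which matches $d=2$, $H=1$. Your stronger claim that the remainder is $O(\varepsilon^6)$ is plausible by the same parity reasoning but likewise unverified; it is not needed for the stated $O(\varepsilon^5)$.
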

\begin{ppsn}\cite{Liao}
Let $M$ be an $n$-dimensional Riemannian manifold with the distance function $d(\cdot,\cdot),$ and $X_t^x$ be the Brownian motion starting at $x\in M.$ Let $\rho_t:=d(x,X_t^x)$ (known as the radial part of $X_t^x$). Let $T_\epsilon$ be the first exit time of $X_t^x$ form a ball of radius $\epsilon$ around $x,$ $\epsilon$ being fixed. Then $$\IE(\rho_{t\wedge T_\epsilon}^2)=nt-\frac{1}{6}S(x)t^2+o(t^2),$$ where $S(x)$ is the scalar curvature at $x.$
\end{ppsn}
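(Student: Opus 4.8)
The plan is to obtain the asymptotics by Taylor expanding, to second order in $t$, the function $\phi(t):=\IE\big(\rho_{t\wedge T_\epsilon}^2\big)=\IE\big(d(x,X_{t\wedge T_\epsilon}^x)^2\big)$ by means of the generator of the Brownian motion. First I would reduce to the case $\epsilon<\mathrm{inj}(x)$, the injectivity radius at $x$: for $0<\epsilon'<\epsilon$, on the event $\{\sup_{s\le t}d(x,X_s^x)<\epsilon'\}$ one has $t\wedge T_\epsilon=t=t\wedge T_{\epsilon'}$, while the complementary event has probability $\IP(T_{\epsilon'}\le t)=o(t^N)$ for every $N$ as $t\to0$ (a Brownian particle does not travel a fixed positive distance in infinitesimal time — quantitatively one invokes an off-diagonal Gaussian heat-kernel bound on the complete manifold, or compares locally with Euclidean Brownian motion and chains), and on that complement $\rho_{t\wedge T_\epsilon}^2$ and $\rho_{t\wedge T_{\epsilon'}}^2$ are both $\le\epsilon^2$; hence $\phi$ is unchanged modulo $o(t^N)$ by shrinking $\epsilon$. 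So assume $\epsilon<\mathrm{inj}(x)$ and set $f:=d(x,\cdot)^2$, which is then smooth on a neighbourhood of $\overline{B(x,\epsilon)}$ with $f(x)=0$.

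Next comes the stochastic step. Let $L=\tfrac12\Delta$ be the generator of the Brownian motion ($\Delta$ the Laplace--Beltrami operator, in the normalisation for which Euclidean Brownian motion has generator $\tfrac12\sum_i\partial_i^2$); note $f$, $Lf$, $L^2f$ are smooth, hence bounded, on $\overline{B(x,\epsilon)}$. Applying Dynkin's formula to $f$ with the bounded stopping time $t\wedge T_\epsilon$, and then again to $Lf$ inside the resulting time integral — in each case absorbing the contribution of $\{T_\epsilon\le s\}$, which is $O(\IP(T_\epsilon\le s))=o(s^N)$, into the error — gives
\[
\phi(t)=f(x)+\int_0^t\IE\!\big[(Lf)(X_s^x)\,\one_{\{s<T_\epsilon\}}\big]\,ds=f(x)+\int_0^t\Big((Lf)(x)+s\,(L^2f)(x)+o(s)\Big)\,ds,
\]
so that $\phi(t)=t\,(Lf)(x)+\tfrac{t^2}{2}\,(L^2f)(x)+o(t^2)$; equivalently, $t\mapsto\phi(t)$ is twice differentiable at $0^+$ with $\phi'(0^+)=(Lf)(x)$ and $\phi''(0^+)=(L^2f)(x)$. (One may also argue through the Itô equation for the radial process, but it is cleaner to work throughout with the smooth function $\rho^2$ than with $\rho$, which is singular at $x$.)

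It remains to compute $(Lf)(x)$ and $(L^2f)(x)$, and this geometric identity is the heart of the matter. In geodesic normal coordinates $(y^1,\dots,y^n)$ centred at $x$ one has $f(y)=\sum_i(y^i)^2$ exactly, and inserting the standard second-order expansions $g_{ij}(y)=\delta_{ij}-\tfrac13R_{ikjl}(x)y^ky^l+O(|y|^3)$, $\sqrt{\det g(y)}=1-\tfrac16\mathrm{Ric}_{kl}(x)y^ky^l+O(|y|^3)$ into $\Delta f=\tfrac1{\sqrt{\det g}}\,\partial_i\big(\sqrt{\det g}\,g^{ij}\partial_jf\big)$ — the cubic cross term vanishing by the antisymmetry of the curvature tensor — yields $\Delta f(y)=2n-\tfrac23\mathrm{Ric}_{kl}(x)y^ky^l+O(|y|^3)$. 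Hence $\Delta f(x)=2n$, and applying $\Delta$ once more and evaluating at the origin (where only the quadratic term contributes) gives $\Delta(\Delta f)(x)=-\tfrac23\cdot2\sum_k\mathrm{Ric}_{kk}(x)=-\tfrac43S(x)$; the same identities also follow from the Laplacian comparison expansion $\Delta\,d(x,\cdot)=\tfrac{n-1}{\rho}-\tfrac13\mathrm{Ric}(\partial_\rho,\partial_\rho)\rho+O(\rho^2)$ (valid for $0<\rho<\mathrm{inj}(x)$) together with $\Delta(d(x,\cdot)^2)=2\rho\,\Delta\,d(x,\cdot)+2$. Therefore $(Lf)(x)=\tfrac12\cdot2n=n$ and $(L^2f)(x)=\tfrac14\,\Delta(\Delta f)(x)=-\tfrac13S(x)$, and substituting into the expansion for $\phi$ proves $\IE(\rho_{t\wedge T_\epsilon}^2)=nt-\tfrac16S(x)t^2+o(t^2)$.

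The \emph{main obstacle} is precisely this last step: one must push the Taylor expansion of the metric in normal coordinates to second order for the scalar curvature to surface in $\Delta^2(d(x,\cdot)^2)(x)$, and keep careful track of the curvature symmetries so that the would-be odd-degree contributions drop out. By contrast the stochastic part is routine, the only point deserving comment being the a priori estimate $\IP(T_\epsilon\le t)=o(t^N)$ that legitimises discarding the exit event; everything else is just Dynkin's formula for a smooth function and a bounded stopping time, iterated.
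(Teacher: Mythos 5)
The paper contains no proof of this proposition: it is quoted directly from \cite{Liao} (Liao--Zheng), so there is nothing internal to compare your argument against, and I assess it on its own terms. Your proof is correct, and it takes a genuinely different route from the original one in \cite{Liao}, where the authors analyse the radial process $\rho_t$ itself through its It\^o/semimartingale decomposition; you instead sidestep the singularity of $\rho$ at $x$ by applying Dynkin's formula twice to the smooth function $f=d(x,\cdot)^2$ on a ball inside the injectivity radius, which reduces the whole statement to the two pointwise identities $\Delta f(x)=2n$ and $\Delta^2 f(x)=-\tfrac{4}{3}S(x)$, obtained from the second-order normal-coordinate expansion of the metric (and cross-checked via the Laplacian comparison expansion). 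The auxiliary facts you invoke are standard and used correctly: the reduction to $\epsilon<\mathrm{inj}(x)$ costs only $O(\epsilon^2\,\IP(T_{\epsilon'}\le t))$, the estimate $\IP(T_{\epsilon'}\le t)=o(t^N)$ is a purely local exit-time bound (and since only the stopped process enters, no completeness or explosion issue arises), the antisymmetry argument killing the cubic term is right, and the pointwise $o(s)$ errors integrate to $o(t^2)$. The one thing you should state explicitly is the normalization: your computation takes the generator to be $\tfrac12\Delta_{LB}$, which is the convention of \cite{Liao} and the one for which $nt-\tfrac16 S(x)t^2$ is the correct expansion; it is not the convention this paper adopts for Riemannian Brownian motion (generator equal to the Laplacian itself, without the factor $\tfrac12$), under which the same argument would give $2nt-\tfrac23 S(x)t^2+o(t^2)$.
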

We shall need a slightly modified version of the asymptotics described by Proposition \ref{pinsky}, using the expression obtained in \cite{gray}, of the volume of a small extrinsic ball as described below:

Let $V_m(\epsilon)$ denote the ball of radius $\epsilon$ around $m\in M.$ Let $n$ be the intrinsic dimension of the manifold. Then we have
\begin{equation}
V_m(\epsilon)=\frac{\alpha_n\epsilon^n}{n}\left(1-K_1\epsilon^2+K_2\epsilon^4+O(\epsilon^6)\right)_m,
\end{equation}
where $\alpha_n:=2\Gamma(\frac{1}{2})^n\Gamma(\frac{n}{2})^{-1}$ and $K_1,K_2$ are constants depending on the manifold. 

The intrinsic dimension $n$ of the hypersurface $M$ is obtained from $\IE(\tau_\epsilon)$ as the unique integer $n$ satisfying 
$\lim_{\epsilon\rightarrow0}\frac{\IE(\tau_{_{\epsilon}})}{V_{_{\epsilon}}^{\frac{2}{m}}}=
\begin{cases}
\infty~\mbox{if $m$ is just less than $n$}\\
\neq0~\mbox{if $m\neq n$}\\
=0~\mbox{if $m>n.$}
\end{cases}
$

Observe that $\frac{V(\epsilon)^{\frac{2}{n}}}{\epsilon^2}\rightarrow (\frac{\alpha_n}{n})^{\frac{2}{n}}$ and 
$\frac{V(\epsilon)^{\frac{4}{n}}}{\epsilon^4}\rightarrow(\frac{\alpha_n}{n})^{\frac{4}{n}}$ as $\epsilon\rightarrow0^+.$ So the asymptotic expression of Proposition \ref{pinsky} can be recast as 
\begin{equation*}
\IE(\tau_\epsilon)=\frac{1}{2(d-1)}(\frac{V(\epsilon)n}{\alpha_n})^{\frac{2}{n}}+\frac{H^2}{8(d+1)}(\frac{V(\epsilon)n}{\alpha_n})^{\frac{4}{n}}+O(V(\epsilon)^{\frac{5}{n}}).
\end{equation*}
In particular, we get the extrinsic dimension $d$ and the mean curvature $H$ by the following formulae:
\begin{equation}\label{intrinsic dimension}
d=\frac{1}{2}(1+\lim_{\epsilon\rightarrow0}\frac{1}{\IE(\tau_\epsilon)}(\frac{nV(\epsilon)}{\alpha_n})^{\frac{2}{n}}),
\end{equation}
\begin{equation}\label{formula for mean curvature}
H^2=8(d+1)(\frac{\alpha_n}{n})^{\frac{4}{n}}\lim_{\epsilon\rightarrow0}\frac{\IE(\tau_\epsilon)-\frac{1}{2(d-1)}(\frac{nV(\epsilon)}{\alpha_n})^{\frac{2}{n}}}{V(\epsilon)^{\frac{4}{n}}}.
\end{equation}

\paragraph{}
If there is a Lie group $G$ which has a left (right) action on $M,$ then it is natural to study the diffusion processes $X_t\equiv\{X_t^m,m\in M\}$ which are left (right) invariant in the sense that\\ $g\cdot X^m_t=X_t^{g\cdot m}~(X_t^m\cdot g=X^{m\cdot g}_t)$ almost everywhere for all $g\in G,m\in M.$ In particular, if $M=G,$  we shall call $X_t^e$ (where $e$ is the identity element of $G$) the cannonical left (right) invariant diffusion  process, and we will usually drop the adjective left or right. For such a diffusion  process, the generator $L=\sum_iA_iX_i+\frac{1}{2}\sum_{i,j}B_{ij}X_iX_j,$ where $(B_{ij})_{i,j}$ is a non-negetive definite matrix and $\{X_1,...X_d\}$ is a basis of the Lie-algebra $\clg.$ The diffusion  process defined above is called bi-invariant if it is both left and right invariant. We also note that such processes constitute a special class of the so-called {\it Levy process} on groups \cite{mingliao} i.e. a stochastic process which has almost surely cadlag paths, left (right) independent increment and left (right) stationary increments (see \cite{mingliao} for details).
\begin{ppsn}(\cite{ito})
A necessary and sufficient condition for a Diffusion process Motion on a Lie group $G$ to be bi-invariant is the following: 
$$A_jC_{kj}^l=0,~B_{ij}C_{kj}^l+B_{jl}C_{kj}^i=0~~(1\leq i,k,l\leq d),$$ where $C^l_{kj}$ are the Cartan coefficients of $G$.
\end{ppsn}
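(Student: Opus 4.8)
The plan is to translate bi-invariance of the (left-invariant) diffusion process into an algebraic invariance property of its generator inside the universal enveloping algebra $U(\clg)$, and then to extract the two displayed families of equations by a direct commutator computation. To begin with, a Markov process on $G$ is left-invariant exactly when its Markov generator commutes with all left translations, and --- classically --- the differential operators on $G$ with that property are precisely the left-invariant ones, i.e.\ the image of $U(\clg)$ under the identification of $\clg$ with the left-invariant vector fields. For a \emph{diffusion} this forces $L=\sum_j A_jX_j+\tfrac12\sum_{i,j}B_{ij}X_iX_j$ with $(B_{ij})$ symmetric non-negative definite (no jump/`number' part present), and conversely, by Hunt's theorem, any such $L$ generates a unique left-invariant diffusion on $G$. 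Thus we may fix a left-invariant $L$ of this form and only have to decide when the associated process is also right-invariant.

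Right-invariance of the process is equivalent to $L$ commuting with every right translation $R_g$. Since $L$ already commutes with every left translation $L_g$, it commutes with all $R_g$ if and only if it commutes with every inner automorphism $c_g=L_g\circ R_{g^{-1}}$; as $c_g$ is an automorphism of $G$ fixing $e$ with differential $\mathrm{Ad}(g)$ at $e$, this is the same as saying that $L$ is fixed by the induced $\mathrm{Ad}(G)$-action on $U(\clg)$ (which replaces each $X_i$ by $\mathrm{Ad}(g)X_i$). Assuming $G$ connected and using $\mathrm{Ad}(\exp(tX))=\exp(t\,\mathrm{ad}(X))$, this $\mathrm{Ad}(G)$-invariance is in turn equivalent to the infinitesimal condition
\[
[X_k,L]=0 \quad\text{in } U(\clg),\qquad k=1,\dots,d.
\]

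Finally, expand the commutator using $[X_k,X_j]=\sum_l C^l_{kj}X_l$ and the derivation identity $[X_k,X_iX_j]=[X_k,X_i]X_j+X_i[X_k,X_j]$:
\[
[X_k,L]=\sum_{l}\Big(\sum_j A_jC^l_{kj}\Big)X_l+\tfrac12\sum_{i,j,l}B_{ij}\big(C^l_{ki}X_lX_j+C^l_{kj}X_iX_l\big),
\]
which is the sum of a homogeneous degree-$1$ and a homogeneous degree-$2$ element. Collecting, in the degree-$2$ part, the coefficient of $X_aX_b$ and using $B_{ij}=B_{ji}$, one gets $\tfrac12\sum_j(B_{aj}C^b_{kj}+B_{bj}C^a_{kj})$, manifestly symmetric in $a\leftrightarrow b$; thus the degree-$2$ part is zero in $U(\clg)$ exactly when all these coefficients vanish, i.e.\ when $\sum_j(B_{ij}C^l_{kj}+B_{jl}C^i_{kj})=0$ for every $i,k,l$. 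When that holds, $[X_k,L]$ collapses to its degree-$1$ term $\sum_l\big(\sum_j A_jC^l_{kj}\big)X_l$, which vanishes iff $\sum_j A_jC^l_{kj}=0$ for every $k,l$. Conversely, these two families of equations clearly force $[X_k,L]=0$ for every $k$. Combined with the previous two paragraphs, this establishes the proposition.

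The one genuinely substantive step is the second --- identifying the probabilistic notion of bi-invariance with $\mathrm{Ad}(G)$-invariance of the generator. It rests on uniqueness of the process attached to its left-invariant generator (Hunt), on the classical fact that a left-invariant differential operator commutes with right translations iff it is $\mathrm{Ad}(G)$-fixed, and on connectedness of $G$ to descend from the group-level invariance to the Lie-algebra-level one. After that reduction, only the commutator bookkeeping remains, and its sole delicate point is that the degree-$1$ and degree-$2$ components of $[X_k,L]$ decouple --- which is exactly what the symmetry of $(B_{ij})$ guarantees.
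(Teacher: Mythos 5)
Your proposal is correct, and it should be said up front that the paper itself gives no argument for this proposition: it is quoted verbatim from It\^o's 1950 paper, so there is no internal proof to compare against. Your route --- (1) left invariance identifies the generator with an element $L=\sum_j A_jX_j+\tfrac12\sum_{i,j}B_{ij}X_iX_j$ of $U(\mathfrak g)$ (which the paper itself already assumes in the sentence preceding the proposition, with Hunt's theorem supplying existence/uniqueness), (2) right invariance of a left-invariant operator is equivalent to invariance under the inner automorphisms, i.e.\ under the $\mathrm{Ad}(G)$-action on $U(\mathfrak g)$, (3) for connected $G$ this reduces to $[X_k,L]=0$ for all $k$, and (4) expanding the commutator and comparing coefficients yields exactly $\sum_j A_jC^l_{kj}=0$ and $\sum_j\bigl(B_{ij}C^l_{kj}+B_{jl}C^i_{kj}\bigr)=0$ --- is essentially the classical It\^o/Hunt/Liao argument, so you have in effect supplied the missing proof rather than found a new one. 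Two small points deserve to be made explicit. First, the passage from $\mathrm{Ad}(G)$-invariance to the infinitesimal condition genuinely needs $G$ connected (otherwise the displayed equations only give invariance under the identity component), which is the setting of It\^o's paper though the statement here just says ``Lie group''. Second, the coefficient-comparison step silently uses the Poincar\'e--Birkhoff--Witt theorem: the monomials $X_aX_b$ are not linearly independent in $U(\mathfrak g)$, and one must first observe, as you do, that the quadratic coefficient array $\tfrac12\sum_j(B_{aj}C^b_{kj}+B_{bj}C^a_{kj})$ is symmetric in $(a,b)$ (this is where symmetry of $B$ enters), so that rewriting in a PBW basis produces no spill-over into the degree-one part; only then does vanishing of $[X_k,L]$ force each family of coefficients to vanish separately. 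With these two remarks spelled out, your argument is complete.
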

If $M$ is a symmetric space (i.e. the isometry group $G$ acts transitively on $M$), it is interesting to study the diffusion processes on $M$ which are covariant i.e. 
$\alpha_g\circ L=L\circ\alpha_g$ for all $g\in G,$ where $L$ is the generator of the diffusion process and $\alpha:G\times M\rightarrow M$ is the action of $G$ on $M.$ 
\begin{ppsn}\label{ming2}\cite{mingliao}
Let $G$ be a Lie group and let $K$ be a compact subgroup. If $g_t$ is a right $K$ invariant left Levy process in $G$ with $g_0=e$, then its one point motion from $o=eK$ in $M=G/K$ is a $G$ invariant Feller process in $M.$ Conversely, if $x_t$ is a $G$ invariant Feller process in $M$ with $x_0=o$, then there is a right $K$ invariant left Levy process $g_t$ in $G$ with $g_0=e$ such that its one-point motion in $M$ from $o$ is identical to the process $x_t$ in distribution.
\end{ppsn}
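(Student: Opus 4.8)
The assertion is a correspondence between two families of processes, and I would prove the two implications separately; the common idea is to pass to transition semigroups and to use the fact that functions on $M=G/K$ are exactly the functions on $G$ that are constant along the fibres of the quotient map $\pi\colon G\to M$. For the forward implication, the plan is: let $P_t$ be the transition semigroup of $g_t$ on $C_0(G)$; since $g_t$ is a left L\'evy process, $P_t$ is left-invariant, and the right-$K$-invariance of $g_t$ guarantees that $P_t$ maps the subspace of right-$K$-invariant functions into itself, so that $P_t$ descends to a semigroup $Q_t$ on $C_0(M)$ characterized by $P_t(\phi\circ\pi)=(Q_t\phi)\circ\pi$. One then checks routinely that $Q_t$ is a Feller semigroup (positivity, contractivity, conservativeness and strong continuity all descend from $P_t$) and that it is $G$-invariant, the latter following from the left-invariance of $P_t$ because left translations on $G$ induce the $G$-action on $M$ through $\pi$. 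Finally $x_t:=\pi(g_t)$ has c\`adl\`ag paths (continuous image of a c\`adl\`ag path), is Markov with semigroup $Q_t$ (by the descent identity together with the stationary independent increments and the Markov property of $g_t$), and starts at $\pi(e)=o$; hence it is the desired $G$-invariant Feller process and is by construction the one-point motion of $g_t$ from $o$.

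For the converse I would lift at the infinitesimal level. Let $Q_t$ be the Feller semigroup of the given $G$-invariant process, with generator $L$. Using that $\pi\colon G\to M$ is a principal bundle with the \emph{compact} structure group $K$, define a left-invariant operator $\widehat{L}$ on $C_0(G)$ by first setting $\widehat{L}(\phi\circ\pi)=(L\phi)\circ\pi$ on the right-$K$-invariant functions and then extending to all functions by averaging over $K$ against its normalized Haar measure; this averaging, which simultaneously makes the definition consistent and forces $\widehat{L}$ to commute with right translations by $K$, is precisely where compactness of $K$ is used. One then verifies that $\widehat{L}$ is densely defined and satisfies the positive maximum principle, so that, by a Courr\`ege/Hunt-type representation (equivalently, by Hille--Yosida together with left-invariance), $\widehat{L}$ generates a left-invariant Feller semigroup $P_t$ on $C_0(G)$. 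A left-invariant Feller semigroup is automatically stochastically continuous and arises from a weakly continuous convolution semigroup of probability measures, so there is a left L\'evy process $g_t$ in $G$ with $g_0=e$ and transition semigroup $P_t$, and it is right-$K$-invariant because $P_t$ is. Since $\pi(g_t)$ then has semigroup $Q_t$ and starts at $o$, it agrees in law with $x_t$ as a process.

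The hard part will be the lifting step of the converse: exhibiting $\widehat{L}$ and showing that it genuinely generates a Feller semigroup on $G$ --- equivalently, that the associated L\'evy--Khintchine data on $G$ are legitimate. Two technical points need care. First, a naive pointwise lift of functions or operators would require a section $M\to G$, which in general neither exists globally nor can be chosen continuous; the $K$-averaging device avoids this, but one must still check that the averaged objects stay in $C_0(G)$ and in the domain of a generator. Second, one must verify the positive maximum principle (equivalently, dissipativity) for $\widehat{L}$ before Hille--Yosida or the Hunt representation can be invoked. Everything else --- the descent of the Feller property, the matching of $G$-invariance upstairs and downstairs, and the passage between convolution semigroups and L\'evy processes --- is standard once this semigroup dictionary is in place. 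A pathwise alternative, constructing $g_t$ directly as a measurable lift of the canonical process along $\pi\colon G\to M$, is conceivable but runs into the same measurability and regularity difficulties.
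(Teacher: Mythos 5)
The paper does not actually prove this proposition: it is quoted verbatim from Liao's book \cite{mingliao} and used as a black box, so the only meaningful comparison is with Liao's own argument. Your forward implication is fine and is essentially the standard (and Liao's) argument: the transition semigroup of a left L\'evy process is left-invariant, right-$K$-invariance lets it preserve the subalgebra of fibrewise-constant functions, and the descended semigroup drives $\pi(g_t)$, whose Markov property follows from the independent stationary increments.

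The converse, however, contains a genuine gap, and it is precisely at the step you label as the hard part: the proposed lift $\widehat{L}f:=(\text{lift of }L)(\mathrm{Av}_K f)$, i.e.\ fibre-averaging followed by the tautological lift, is not merely unverified --- it fails. It does not satisfy the positive maximum principle once $L$ has a nonlocal part: take $G=\IR\times K$, $M=\IR$, $L\phi(x)=\phi(x+1)-\phi(x)$ (a translation-invariant compound Poisson generator), and $f\geq 0$ with a strict global maximum $1$ at $(0,k_0)$ but small fibre average $\beta$ at $x=0$, while $f\equiv 0.9$ on the fibre over $x=1$; then $\widehat{L}f(0,k_0)=0.9-\beta>0$. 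So $\widehat{L}$ is not closable to a Feller generator and the Courr\`ege/Hille--Yosida route collapses. There is a structural reason behind this: the canonical right-$K$-invariant lifts of the laws $\mu_t$ of $x_t$ form a convolution semigroup converging to the Haar measure of $K$, not to $\delta_e$ (indeed no right-$K$-invariant measures can converge weakly to $\delta_e$ for nontrivial $K$), so the ``canonical'' lift describes a process that instantly randomizes over the fibre rather than one with $g_0=e$; the increment laws of the process you must produce are $K$-conjugation-invariant, not right-translation-invariant, measures projecting onto $\mu_t$. Producing them is the real content of the converse, and Liao does it by lifting the Hunt/L\'evy--Khintchine characteristics of the $G$-invariant generator on $G/K$ through the reductive splitting $\mathfrak{g}=\mathfrak{k}\oplus\mathfrak{p}$ with $Ad(K)\mathfrak{p}\subseteq\mathfrak{p}$ (choosing coordinate functions and lifting drift, diffusion and L\'evy measure $K$-equivariantly), which is exactly the step your averaging device was meant to replace and cannot.
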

Suppose that $G$ is compact. The proof of Proposition \ref{ming2}, as in \cite{mingliao} then  implies that any covariant diffusion process $x_t$ on $M$ can be realized as restriction of a corresponding right $K$ invariant diffusion process on $G.$

\subsection{Quantum Stochastic Calculus}\label{Quantum stochastics}
We refer the reader to \cite{krp} and \cite{dgkbs} for the basics of Hudson-Parthasarathy (H-P for short) formalism and Evans-Hudson (E-H) formalism of quantum stochastic calculus which we briefly review here.\\Let $\clv,\clw$ be vector spaces and $\clv_0\subseteq\clv,~\clw_0\subseteq\clw$ be vector subspaces. We will denote by $Lin(\clv_0,\clw_0)$ the space of all linear maps with domain $\clv_0$ and range lying inside $\clw_0.$\\ For 
$T\in Lin(\clv_0\ot\clw_0,\clv\ot\clw),~\xi,\eta\in\clw_0,$ denote by $\innerl\xi,T_\eta\innerr$ the map from $\clv_0$ to $\clv$ defined by $\innerl u,\innerl\xi,T_\eta\innerr v\innerr=\innerl u\ot\xi,T(v\ot\eta)\innerr,$ for $u,v\in\clv_0.$ Furthermore for $L\in Lin(\clv_0,\clv\ot\clw),$ denote by $\innerl\xi,L\innerr$ the operator defined by $\innerl\innerl\xi,L\innerr u,v\innerr=\innerl L(u),v\ot\xi\innerr$ and define $\innerl L,\xi\innerr:=\innerl\xi,L\innerr^*,$ whenever it exists. For a Hilbert space $\clh,$ $\Gamma(\clh)$ will denote the symmetric fock space over $\clh$ and for $f\in\clh,$ $e(f)$ will denote the exponential vector on $f$ (see \cite{dgkbs}).
\subsubsection{Hudson-Parthasarathy equation}
Let $h$ and $k_0$ be Hilbert spaces with subspaces $\clv_0\subseteq h $ and $\clw_0\subseteq k_0.$  Consider a quadruple of operators $(A,R,S,T),$ such that $D(R),D(S)$ and $D(A)$ are subspaces of $h,$ $D(T)$ is a subspace of $h\ot k_0.$ Suppose $A\in Lin(D(A),h),~R\in Lin(D(R),h\ot k_0),$ 
$S\in Lin(D(S),h\ot k_0)$ and $T\in Lin(D(T),h\ot k_0).$ Furthermore assume that 
$$\clv_0\subseteq\bigcap_{\xi,\eta\in\clw_0}\left(D(\innerl\xi,R\innerr)\cap D(\innerl S,\eta\innerr)\cap D(\innerl\xi,T_\eta\innerr)\cap D(A)\right).$$ Let $\Gamma:=\Gamma(L^2(\IR_+,k_0)).$ A family of operators $(V_t)_{t\geq0}\in Lin(h\ot\Gamma,h\ot\Gamma)$ is called a solution of an H-P equation with initial Hilbert space $h,$ noise space $k_0$ and initial condition $V_0=id,$ if it satisfies an equation of the form
$$\innerl ue(f),V_tve(g)\innerr=\innerl ue(f),ve(g)\innerr+\int_0^t\innerl ue(f),V_s\circ\left(A+\innerl f(s),S\innerr+\innerl R,g(s)\innerr+\innerl f(s),T_{g(s)}\innerr\right)ve(g)\innerr ds,$$ for $u,v\in\clv_0,$ $f,g$ being step functions taking values in $\clw_0.$ We will denote the above equation symbolically by 
\begin{equation}
\begin{split}
dV_t=V_t\circ(a^\dagger_S(dt)+&a_R(dt)+\Lambda_T(dt)+Adt),\\ 
&V_0=id.
\end{split}
\end{equation}
\subsubsection{Evans-Hudson equation}
Let $\cla\subseteq B(h)$ be a $C^*$ or von-Neumann algebra such that there exists a dense (in the appropriate topology) $\ast$-subalgebra $\cla_0.$ Suppose $\cll$ is a densely defined map on $\cla$ such that $\cla_0\subseteq D(\cll).$ Assume the following:
\begin{enumerate}
\item There exists a $\ast$-representation $\pi:\cla\rightarrow\cla\ot B(k_0)$ (normal in case $\cla$ is a von-Neumann algebra);
\item a $\pi$-derivation $\delta:\cla_0\rightarrow\cla\ot k_0,$ such that :
$$\delta(x)^*\delta(y)=\cll(x^*y)-x^*\cll(y)-\cll(x^*)y,$$ for all $x,y\in\cla_0.$
\end{enumerate}
Let $\sigma:=\pi-id_{B(h\ot k_0)}.$ Then a family of $\ast$-homomorphism $j_t:\cla\rightarrow\cla^{\prime\prime}\ot B(\Gamma),t\geq0$ is said to satisfy an E-H equation with initial condition $j_0=id,$ if 
\begin{equation}
\begin{split}
\innerl ue(f),j_t(x)ve(g)\innerr=\innerl ue(f),xve(g)\innerr
&+\int_0^t\langle ue(f),j_s\circ(\cll(x)+\innerl f(s),\delta(x)\innerr
+\innerl\delta(x^*),g(s)\innerr\\
&+\innerl f(s),\sigma(x)_{g(s)}\innerr)ve(g)\rangle ds,
\end{split}
\end{equation}
 for all $x\in\cla_0$ $u,v\in h,$ $f,g$ being step functions. We will write the above equation symbolically as
\begin{equation}
\begin{split}
dj_t=j_t\circ(a^\dagger_\delta(dt)+&a_{\delta^\dagger}(dt)+\Lambda_\sigma(dt)+\cll dt)\\
&j_0=id. 
\end{split}
\end{equation}
We will call  any of the two equations described above a quantum stochastic differential equation (QSDE for short). It is well known that solutions of such QSDE are cocycles (see \cite{dgkbs,krp}). If the solution of an H-P equation is unitary, then the solution will be called the H-P dilation of the vacuum semigroup $T_t(x):=\innerl e(0),V_t(x\ot1_{\Gamma})V_t^*{e(0)}\innerr$ and $j_t$ is called an E-H dilation of the vacuum semigroup $T_t(x):=\innerl e(0),j_t(x)e(0)\innerr.$\\
\bdfn
A semigroup $(T_t)_{t\geq0}:\cla\rightarrow\cla$ is called a quantum dynamical semigroup (QDS for short) if for each $t,$ $T_t$ is a contractive completely positive map on $\cla$ (normal in case $\cla$ is a von-Neumann algebra). The semigroup is callled conservative if $T_t(1)=1$ for all $t\geq0.$
\edfn
Typical examples of QDS are the Markov semigroups associated with a Markov process as well as the vacuum semigroups described above.
\bthm\label{THE THEOREM}
Let $R:D(R)\rightarrow h\ot k_0$ be a densely defined closed operator with $D(R)\subseteq h,$ for Hilbert spaces $h,k_0.$ Suppose there exists a dense subspace $\clw_0\subseteq k_0,$ such that $u\ot\xi\in D(R^*)$ for $u\in D(R),\xi\in\clw_0.$ Let $H$ be a densely defined self adjoint operator on $h$ such that $iH-\frac{1}{2}R^*R~(=G)$ as well as $-iH-\frac{1}{2}R^*R~(=G^*)$ generate $C_0$ semigroups in $h.$ Furthermore, suppose that both of $D(G)$ and $D(G^*)$ are contained in $D(R).$  Then the QSDE:
\begin{equation}
\begin{split}
dU_t=U_t\circ(a^\dagger_R(dt)&-a_R(dt)+(iH-\frac{1}{2}R^*R)dt)\\
&U_0=id; 
\end{split}
\end{equation}
has a unique solution which is unitary.
\ethm
\begin{proof}
Let  $Z=$
$\begin{pmatrix}
iH-\frac{1}{2}R^*R&R^*\\
R&0
\end{pmatrix}.$ Suppose $H=u|H|,~R=v|R|$ be the polar decomposition of $H$ and $R$ respectively.\\ 
Put $A^{(n)}:=iu(1+\frac{|H|}{n})^{-1}|H|-\frac{1}{2}R^{(n)*}R^{(n)},$ where $R^{(n)}:=R(1+\frac{|R|}{n})^{-1},$ and  
$Z^{(n)}=
\begin{pmatrix}
A^{(n)}&R^{(n)*}\\
R^{(n)}&0
\end{pmatrix}.
$
Then it can be verified that all the conditions of Theorem 7.2.1 in page 174 of \cite{dgkbs} hold. Thus the above equation has a contractive solution $U_t,t\geq0.$ Now observe that in the notation of Theorem 7.2.3 in page 179 of \cite{dgkbs}, $\cll^\gamma_\eta(I)=0,$ for all $\gamma,\eta\in\IC\oplus\clw_0.$ We will prove that $\beta_\lambda=\{0\}.$ Formally define $L(x)=R^*(x\ot1_{k_0})R+xG+G^*x,$ where $G=iH-\frac{1}{2}R^*R.$ Then the conditions ${\bf (Ai)}$ and ${\bf (Aii)}$ in page 39 of \cite{dgkbs} hold. So by Theorem 3.2.13 in page 46 of \cite{dgkbs}, there is a minimal semigroup say $(\wtil{T_t})_{t\geq0}$ on $B(h),$ whose form generator $\wtil{\cll^{min}}$ on a certain dense subspace is  of the form $R^*(x\ot1_{k_0})R+xG+G^*x.$ We prove that $\wtil{T_t}$ is conservative:\\
Let $\cld\subseteq h$ be the subspace such that for $x\in\cld,$ $R^*(x\ot1_{k_0})R+xG+G^*x\in B(h),$ i.e. $L(x)\in B(h)$ for $x\in\cld.$ Note that $1:=1_{B(h)}\in\cld.$ Let $(\wtil{T}_{\ast,t})_{t\geq0}$ be the predual semigroup of $(\wtil{T_t})_{t\geq0}.$ It is known (see chapter 3 of \cite{dgkbs}) that for $\sigma\in B_1(h)~(B_1(h)$ is the space of trace class operators on $h),$ the linear span of operators $\rho$ of the form $\rho=(1-G)^{-1}\sigma(1-G)^{-1},$ denoted by $\clb$ say, belongs to $D(\wtil{\cll^{min}_\ast}),$ $\wtil{\cll^{min}_\ast}$ being the generator of $(\wtil{T}_{\ast,t})_{t\geq0}.$ Moreover we have $\wtil{\cll^{min}_\ast}(\rho)=R\rho R^*+G\rho+\rho G^*$ for $\rho\in\clb,$ and $\clb$ is a core for $\wtil{\cll^{min}_\ast}.$    Now for $a\in \cld,\rho\in \clb,$ $tr(L(a)\rho)=tr(a\wtil{\cll^{min}_\ast}(\rho)).$ Since $\clb$ is a core for $\wtil{\cll^{min}_\ast},$ we have $tr(L(a)\rho)=tr(a\wtil{\cll^{min}_\ast}(\rho))$ for all $\rho\in D(\wtil{\cll^{min}_\ast}).$ Observe that for $\rho\in D(\wtil{\cll^{min}_\ast}),$
\begin{equation}
\begin{split}
tr\left(\frac{\wtil{T}_t(a)-a}{t}\rho\right)&=
tr\left(a\left(\frac{\wtil{T}_{\ast,t}(\rho)-\rho}{t}\right)\right)=
tr\left(a\wtil{\cll^{min}_\ast}\left(t^{-1}\int_0^t\wtil{T}_{\ast,s}(\rho)ds\right)\right)\\
&=tr\left(L(a)\left(t^{-1}\int_0^t\wtil{T}_{\ast,s}(\rho)ds\right)\right);
\end{split}
\end{equation}
which proves that $a\in D(\wtil{\cll^{min}})$ and by continuity, $L(a)=\wtil{\cll^{min}}(a),$ for all $a\in\cld.$ Now $L(1)=0$ which implies that $\wtil{\cll^{min}}(1)=0,$ i.e. $(\wtil{T_t})_{t\geq0}$ is conservative. Thus by condition $(v)$ in page 48 of Theorem 3.2.16 of \cite{dgkbs}, $\beta_\lambda=\{0\}.$ The same set of arguments hold for 
$G=-iH-\frac{1}{2}R^*R,$ which implies that $\wtil{\beta}_\lambda=\{0\}$ (in the notation of Theorem 7.2.3 of \cite{dgkbs}). Moreover $\wtil{\cll}^\gamma_\eta(I)=0.$ Thus all the conditions of Theorem 7.2.3 in page 179 of \cite{dgkbs} hold, which proves that the solution is unitary. The uniqueness follows from (\cite[Theorem  p. ]{mohari}).
\end{proof}

\subsection{Compact Quantum Group}\label{CQG}
We shall refer the reader to \cite{CQG} and the references therein for the basics of Compact Quantum Group, which we briefly review here.  Suppose $\cla,\clb$ are $\ast$-algebras. If both $\cla$ and $\clb$ are $C^*$ algebras, then  $\cla\ot\clb,$ will mean the injective tensor product, otherwise they will denote the algebraic tensor product.
\bdfn 
A compact quantum group (CQG for short) is a unital $C^\ast$-algebra $\clq\subseteq B(k)$ equipped with a unital $\ast$-homomorphism (called coproduct) $\Delta:\clq\rightarrow\clq\ot\clq$ such that $\Delta(\clq)(\clq\ot1)$ as well as $\Delta(\clq)(1\ot\clq)$ are dense in $\clq\ot\clq.$
\edfn

Given a CQG $\clq,$ there exists a unique state $h$ on $\clq$ called the Haar state (see \cite{CQG}) satisfying 
$(h\ot id)\Delta(a)=(id\ot h)\Delta(a)=h(a){\bf 1_{\clq}}.$ Moreover, we have a dense $\ast$-algebra $\clq_0\subseteq\clq$ which is a Hopf$\ast$-algebra, equipped with counit $\epsilon:\clq_0\rightarrow\IC$ and antipode $\kappa:\clq_0\rightarrow\clq_0,$ satisfying $(\epsilon\ot id)\Delta=(id\ot\epsilon)\Delta=id.$ Throughout the discussion, we will use Sweedler's notation for CQG i.e. $\Delta(a)=a_{(1)}\ot a_{(2)},$ for all $a$ in $\clq_0.$

For a map $X\in B(\clh_1\ot\clh_2),$ we will use the notation $X_{(12)}$ to denote the operator $X\ot I_{\clh_3}$ and the notation $X_{(13)}$ to denote the operator $\Sigma_{23}X_{(12)}\Sigma_{23},$ where $\Sigma_{23}\in U(\clh_1\ot\clh_2\ot\clh_3)$ is the flip between $\clh_2$ and $\clh_3.$ 
\bdfn 
A map $U:\clh\rightarrow\clh\ot\clq,$ where $\clh$ is a hilbert space is called an unitary (co)representation of the CQG $\clq$ on the Hilbert space $\clh,$ if  $\wtil{U}\in\clm(\clk(\clh)\ot \clq)$ defined by $\wtil{U}(\xi\ot b):=U(\xi)(1\ot b),$ for $\xi\in\clh$ and $b\in\clq,$ is an unitary operator which further satisfies $(id_\clh\ot\Delta)\wtil{U}=\wtil{U}_{(12)}\wtil{U}_{(13)}.$
\edfn
If dimension of $dim\clh=n<\infty,$ we may alternatively represent $U$ by the $\clq$-valued $n\times n$ invertible matrix $((\innerl Ue_i,e_j\innerr))_{i,j},$ where $\{e_k\}_{k=1}^n$ is an orthonormal basis of $\clh.$ We will call $n$ the dimension of the representation $U.$

By G.N.S construction using $h,$ let $\clq\subseteq B(L^2(h)).$ Then $\Delta$ viewed as $\Delta:L^2(h)\rightarrow L^2(h)\ot\clq$ becomes an unitary representaion (say $U$) such that $\Delta(x)=U(x\ot 1)U^*.$ Moreover, $\clq_0$ is the linear span of the matrix coefficients of all finite dimensional unitary inequivalent corepresentations (see \cite{jyoti,CQG}). Furthermore,
$L^2(h)=\oplus_{\pi}\clh_\pi$ and $\clq_0(\subseteq L^2(h))=\oplus^{alg}_{\pi}\clh_\pi$ where $\clh_\pi$ is a finite dimensional vector space of dimension $d_\pi^2$ obtained from the decomposition of $\Delta$ (viewed as $U$) into finite dimensional irreducibles $\pi$ of dimension $d_\pi$ by the Peter-Weyl theory for CQG\cite{CQG}.

\subsubsection{Action of a compact quantum group on a $C^\ast$-algebra}\label{3-spaces}
We say that a CQG $(\clq,\Delta)$ (co)-acts on a unital $C^\ast$-algebra $\clb,$ if there is a unital $C^*$ homomorphism (called an action) $\alpha:\clb\rightarrow\clb\ot\clq,$ satisfying the following: 
\begin{enumerate}
\item $(\alpha\ot id)\circ\alpha=(id\ot\Delta)\circ\alpha,$
\item the linear span of $\alpha(\clb)(1\ot\clq)$ is dense in $\clb\ot\clq.$ 
\end{enumerate}
It has been shown in \cite{podles} that (2) is equivalent to the existence of a dense $\ast$-subalgebra $\clb_0\subseteq\clb$ such that $\alpha(\clb_0)\subseteq\clb_0\ota\clq_0.$
We say that an action $\alpha$ is faithful, if there is no proper Woronowicz $C^*$-subalgebra (see \cite{jyoti},\cite{CQG}) $\clq_1$ of $\clq$ such that $\alpha$ is a $C^*$ action of $\clq_1$ on $\clb.$ 
We refer the reader to \cite{jyoti} and the references therein for details of $C^*$ action.

For a CQG $(\clq,\Delta),$ denote by $Irr_{_{\clq}},$ the set of inequivalent, unitary irrereducible representations of $\clq$ and let $u^\gamma$ be a co-representation of $\clq$ of dimension $d_\gamma,$ for $\gamma\in Irr_{_{\clq}}.$ We will call a vector subspace $V\subseteq\clb$ a subspace correponding to $u^\gamma$ if 
\begin{itemize}
\item $dimV=d_\gamma,$ 
\item $\alpha(e_i)=\sum_{k=1}^{d_\gamma} e_k\ot u^\gamma_{ki},$ for some orthonormal basis $\{e_j\}_{j=1}^{d_\gamma}$ of $V.$ 
\end{itemize}
\begin{ppsn}\cite{podles}\label{spectral subspaces of action}
Let $\alpha$ be an action of a CQG $(\clq,\Delta)$ on a $C^\ast$-algebra $\clb.$ Then there exists vector subspaces 
$\{W_{\gamma}\}_{\gamma\in Irr_{_{\clq}}}$ of $\clb$ such that 
\begin{enumerate}
\item $\clb=\overline{\oplus_{\gamma\in Irr_{_{\clq}}}W_\gamma}$
\item For each $\gamma\in Irr_{_{\clq}},$ there exists a set $I_\gamma$ and vector subspaces $W_{\gamma i},~i\in I_\gamma,$ such that 
\begin{enumerate}
\item[a.] $W_\gamma=\oplus_{i\in I_\gamma}W_{\gamma i}.$
\item[b.] $W_{\gamma i}$ corresponds to $u^\gamma$ for each $i\in I_\gamma.$ 
\end{enumerate}
\item Each vector subspace $V\subseteq\clb$ corresponding to $u^\gamma$ is contained in $W_\gamma.$
\item The cardinal number of $I_\gamma$ doesn't depend on the choice of $\{W_{\gamma i}\}_{i\in I_\gamma}.$ It is denoted by $c_\gamma$ and called the multiplicity of $u^\gamma$ in the spectrum of $\alpha.$
\end{enumerate}
\end{ppsn}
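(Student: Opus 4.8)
The plan is to reduce the assertion to the purely Hopf-algebraic Peter--Weyl decomposition of comodules, the passage from $\clb$ to its algebraic core being supplied by the result of \cite{podles} recalled above. Since the linear span of $\alpha(\clb)(1\ot\clq)$ is dense in $\clb\ot\clq$, that result gives a dense $\ast$-subalgebra $\clb_0\seq\clb$ --- canonically, the space of algebraic vectors, i.e. those $b\in\clb$ for which $\mathrm{span}\{(\id\ot\psi)\alpha(b):\psi\in\clq^\ast\}$ is finite-dimensional --- with $\alpha(\clb_0)\seq\clb_0\ota\clq_0$, so that $(\clb_0,\alpha|_{\clb_0})$ is a right $\clq_0$-comodule, and $\clb_0$ is the largest dense subalgebra enjoying this comodule property. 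Recall also that, by the Peter--Weyl theory for CQG, $\clq_0$ is a cosemisimple Hopf $\ast$-algebra whose simple right comodules are, up to isomorphism, precisely the carrier spaces $V^\gamma$ of the irreducibles $u^\gamma$, $\gamma\in Irr_{_{\clq}}$.

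Granting this, I would first produce the $W_\gamma$. Cosemisimplicity forces a decomposition $\clb_0=\bigoplus_{\gamma\in Irr_{_{\clq}}}W_\gamma$ into isotypic components, where $W_\gamma$ is by definition the linear span of all simple subcomodules of $\clb_0$ isomorphic to $V^\gamma$; this $W_\gamma$ is intrinsic to the comodule $\clb_0$. Using the standard isotypic isomorphism $W_\gamma\cong V^\gamma\ot\mathrm{Hom}_{\clq_0}(V^\gamma,\clb_0)$ (with the trivial coaction on the multiplicity space), any linear basis $\{T_i:i\in I_\gamma\}$ of $\mathrm{Hom}_{\clq_0}(V^\gamma,\clb_0)$ yields $W_\gamma=\bigoplus_{i\in I_\gamma}W_{\gamma i}$ with $W_{\gamma i}:=T_i(V^\gamma)$ a simple subcomodule isomorphic to $V^\gamma$; transporting a basis of $V^\gamma$ through $T_i$ exhibits $W_{\gamma i}$ as a subspace corresponding to $u^\gamma$, which is (2). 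Since $\clb_0$ is dense, $\clb=\overline{\bigoplus_\gamma W_\gamma}$, which is (1). For (4), $W_\gamma$ does not depend on any choice, and the number of summands in any decomposition of $W_\gamma$ into simple type-$\gamma$ subcomodules equals $c_\gamma:=\dim_\IC\mathrm{Hom}_{\clq_0}(V^\gamma,W_\gamma)$, a comodule invariant.

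For (3), let $V\seq\clb$ be any subspace corresponding to $u^\gamma$, with basis $\{e_i\}$ satisfying $\alpha(e_i)=\sum_k e_k\ot u^\gamma_{ki}$. The key step --- and the only genuinely analytic point --- is to show $V\seq\clb_0$. Since the $u^\gamma_{ki}$ lie in $\clq_0$, applying $\id\ot\psi$ for $\psi\in\clq^\ast$ gives $(\id\ot\psi)\alpha(e_i)=\sum_k\psi(u^\gamma_{ki})e_k\in V$, so every element of $V$ is an algebraic vector and hence lies in $\clb_0$ by the characterization above. Alternatively one may introduce the spectral projection $E_\gamma(b):=d_\gamma(\id\ot h)\big((1\ot\chi_\gamma^\ast)\alpha(b)\big)$ with $\chi_\gamma=\sum_i u^\gamma_{ii}$, and use $(\alpha\ot\id)\alpha=(\id\ot\Delta)\alpha$, invariance of the Haar state $h$ and the orthogonality relations for characters to check that $E_\gamma$ is bounded, that $E_{\gamma'}|_{\clb_0}$ is the projection onto $W_{\gamma'}$ along the other isotypic components, and that $E_{\gamma'}(e_i)=\delta_{\gamma\gamma'}e_i$, whence $e_i=E_\gamma(e_i)\in\clb_0$. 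Once $V\seq\clb_0$, it is a simple subcomodule of type $\gamma$, so $V\seq W_\gamma$ by the very definition of the isotypic component, completing (3).

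The main obstacle is precisely this interface between the $C^\ast$-algebraic setting and the algebraic Peter--Weyl theorem: one must rule out the possibility that passing to the completion creates finite-dimensional subspaces corresponding to some $u^\gamma$ that are not already inside $\clb_0$, and one must justify that the isotypic decomposition of the dense subalgebra $\clb_0$ is all that is needed. Both are controlled by the spectral projections $E_\gamma$ (equivalently, by Podle\'s's identification of $\clb_0$ with the algebraic vectors), so in the write-up I would cite \cite{podles} for the existence of the canonical $\clb_0$ and of the $E_\gamma$ together with their basic properties, after which the remaining work is just the routine cosemisimple-comodule bookkeeping described above.
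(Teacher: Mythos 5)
The paper offers no proof of this proposition at all: it is imported verbatim from Podle\'s (\cite{podles}), so there is no in-paper argument to compare yours against. Your reconstruction follows what is essentially the standard (and Podle\'s's own) route --- the dense spectral subalgebra $\clb_0$, cosemisimplicity of $\clq_0$ and the isotypic decomposition of the comodule, plus Haar-state spectral projections to handle (3) --- and in outline it is sound. Two points, however, need repair before it is a proof rather than a sketch.

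First, the formula $E_\gamma(b)=d_\gamma(\id\ot h)\bigl((1\ot\chi_\gamma^{*})\alpha(b)\bigr)$ together with the claims that $E_{\gamma'}|_{\clb_0}$ is the idempotent onto $W_{\gamma'}$ and that $E_{\gamma'}(e_i)=\delta_{\gamma\gamma'}e_i$ is correct only when the Haar state is a trace (Kac case). For a general CQG the orthogonality relations involve a positive invertible matrix $F_\gamma$ (Woronowicz's modular data), e.g. $h\bigl(u^\gamma_{ki}(u^\gamma_{jl})^{*}\bigr)=\delta_{kj}(F_\gamma)_{il}/\mathrm{Tr}(F_\gamma)$, so the naive character $\chi_\gamma$ must be replaced by its $F_\gamma$-twisted version. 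The argument survives this correction: slicing $\alpha(e_i)=\sum_k e_k\ot u^\gamma_{ki}$ with the functionals $h\bigl(\cdot\,(u^\gamma_{jl})^{*}\bigr)$ still lands in $\clb_0$ and, by invertibility of $F_\gamma$, recovers a basis of $V$ inside the $\gamma$-isotypic part, which gives (3); but as literally written the verification fails outside the Kac case, and the proposition is stated for arbitrary CQG.

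Second, your primary argument for (3) leans on the identification of $\clb_0$ with the set of algebraic vectors (and on $\clb_0$ being the largest subspace with $\alpha(\clb_0)\subseteq\clb_0\ota\clq_0$). That identification is precisely the nontrivial analytic interface you yourself flag: since the counit of a CQG is in general unbounded, you cannot conclude that $b$ lies in $\mathrm{span}\{(\id\ot\psi)\alpha(b):\psi\in\clq^{*}\}$ by ``evaluating at the identity''; one has to recover $b$ (or the $e_i$) through Haar-state slices against matrix coefficients, using faithfulness of $h$ on $\clq_0$ and injectivity of $\alpha$. Invoking it ``by the characterization above'' and then citing \cite{podles} for it is circular in this context, because the statement being proved \emph{is} Podle\'s's theorem. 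The clean fix is to drop the first argument and run only the second one, with the $F_\gamma$-corrected projections; then the whole proof is self-contained modulo Peter--Weyl theory for $\clq_0$ and the existence and density of the Podle\'s subalgebra, which the paper anyway quotes separately.
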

\bdfn
A CQG $(\clq^\prime,\Delta^\prime)$ is called a quantum subgroup of another CQG $(\clq,\Delta)$ if there is a Woronowicz $C^*$-ideal $\clj$ of $\clq$ such that 
$(\clq^\prime,\Delta^\prime)\cong(\clq,\Delta)/\clj.$
\edfn
\bdfn\cite{podles}\label{podles}
Suppose a CQG $(\clq,\Delta)$ acts on a $C^*$-algebra $\clb.$ Then $\clb$ is called 
\begin{enumerate}
\item {\bf A quotient of $(\clq,\Delta)$ by a quantum subgroup $(S,\Delta|_{_S})$} if:
\begin{enumerate}
\item[a)] $\clb$ is $C^*$-isomorphic to the algebra $\clc:=\{x\in\clq:(\pi\ot id)\Delta(x)=1\ot x\},$
\item[b)] the action $\alpha$ is given by $\alpha:=\Delta|_{\clc},$
\end{enumerate}
where $\pi$ is the CQG morphism from $\clq$ to $S.$ 
\item {\bf Embeddable}, if there exists a faithful $C^*$-homomorphism $\psi:\clb\rightarrow\clq$ such that\\ $\Delta\circ\psi=(\psi\ot id)\circ\alpha.$
\item {\bf Homogeneous} if the multiplicity of the trivial representation of $\clq$ in the spectrum of $\alpha$\\ (see \cite{podles}) be $1$. 
\end{enumerate}
\edfn
Henceforth, we will refer to $\clb$ as a quantum space. It can be easily shown that a quantum space is homogeneous if and only if the corresponding action is ergodic (i.e. $\alpha(x)=x\ot I$ implies $x$ is a scalar multiple of the identity of $\clb$.

\begin{ppsn}\cite{podles}
Let $\alpha$ be the action of a CQG $(\clq,\Delta)$ on a $C^*$-algebra $\clb.$ Then 
\begin{enumerate}
\item[a)] $(\clb,\alpha)$ is quotient $\Rightarrow(\clb,\alpha)$ is embeddable $\Rightarrow(\clb,\alpha)$ is homogeneous.
\item[b)] In the classical case, $(\clb,\alpha)$ is quotient $\Leftarrow\Rightarrow(\clb,\alpha)$ is embeddable $\Leftarrow\Rightarrow(\clb,\alpha)$ is homogeneous.   
\end{enumerate}
\end{ppsn}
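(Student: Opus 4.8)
The plan is to prove the chain in part a) directly from the definitions, and then in part b) to supply the single reverse implication (homogeneous $\Rightarrow$ quotient) that closes the cycle in the commutative setting. Throughout I would exploit the recorded equivalence that a quantum space is homogeneous if and only if its action is ergodic, so that ``homogeneous'' may be read as ``$\alpha(x)=x\otimes I$ forces $x$ scalar''.

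For \emph{quotient} $\Rightarrow$ \emph{embeddable}: if $(\clb,\alpha)$ is a quotient by a quantum subgroup $(S,\Delta|_{S})$, then by definition $\clb\cong\clc=\{x\in\clq:(\pi\otimes id)\Delta(x)=1\otimes x\}$ with $\alpha=\Delta|_{\clc}$. I would take $\psi$ to be the composite of this $C^*$-isomorphism with the inclusion $\clc\hookrightarrow\clq$, which is automatically faithful. The intertwining $\Delta\circ\psi=(\psi\otimes id)\circ\alpha$ then holds by construction: on $\clc$ both sides equal $\Delta$, since $\alpha=\Delta|_{\clc}$ and $\psi$ is an inclusion, so $(\psi\otimes id)\Delta(x)=\Delta(x)=\Delta(\psi(x))$. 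Thus this implication is purely a matter of unwinding definitions. For \emph{embeddable} $\Rightarrow$ \emph{homogeneous} I would verify ergodicity: suppose $\alpha(x)=x\otimes 1$; applying $\psi\otimes id$ and using the intertwining gives $\Delta(\psi(x))=\psi(x)\otimes 1$. Applying $id\otimes h$ (the Haar state) to both sides, the invariance property of $h$ makes the left-hand side $h(\psi(x))\,1$ while the right-hand side is $\psi(x)$; hence $\psi(x)=h(\psi(x))\,1$ is a scalar multiple of $1_{\clq}$. Since $\psi$ is a faithful (hence injective) unital $C^*$-homomorphism, $x$ is a scalar multiple of $1_{\clb}$, which is exactly ergodicity, i.e. homogeneity.

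For part b), since a) already yields quotient $\Rightarrow$ embeddable $\Rightarrow$ homogeneous in full generality, it suffices in the commutative case to prove homogeneous $\Rightarrow$ quotient, which closes the cycle of equivalences. Here $\clq=C(G)$ for a compact group $G$, $\clb=C(X)$ for a compact Hausdorff $X$, and $\alpha$ dualizes to a continuous $G$-action on $X$. Homogeneity, read as ergodicity, means that the only $G$-invariant functions in $C(X)$ are constants. I would argue that this forces transitivity: since $G$ is compact every orbit $G\cdot x$ is compact, hence closed, and the orbit space $X/G$ is compact Hausdorff; were there two distinct orbits, Urysohn's lemma applied on $X/G$ would produce a non-constant invariant continuous function, contradicting ergodicity. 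Transitivity then identifies $X$ with $G/H$ for $H$ the stabilizer of a base point (the continuous bijection $G/H\rightarrow X$ is a homeomorphism by compactness), and $C(G/H)=\{f\in C(G):f(gh)=f(g)\ \forall h\in H\}$ is precisely the quotient algebra $\clc$ with $S=C(H)$ and $\pi$ the restriction map. Hence $(\clb,\alpha)$ is a quotient.

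The main obstacle is this reverse implication in the classical case, specifically the passage from ergodicity to transitivity: this is where compactness of $G$ is genuinely used, both to guarantee that orbits are closed and to make $X/G$ a well-behaved Hausdorff quotient on which Urysohn's lemma applies, and it is a step for which the operator-algebraic arguments of part a) offer no analogue. Some care is also needed to match the left/right convention in the definition of $\clc$ with the handedness of the $G$-action, so that $\clc$ is identified with $C(G/H)$ rather than $C(H\backslash G)$; this bookkeeping is routine but should be stated explicitly.
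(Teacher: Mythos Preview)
The paper does not supply its own proof of this proposition: it is quoted from Podle\'s \cite{podles} and immediately followed by ``We refer the reader to \cite{podles} for more discussions on these three types of quantum spaces.'' So there is nothing in the present paper to compare against; your task reduces to whether the argument stands on its own.

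It does. Part a) is handled cleanly: for quotient $\Rightarrow$ embeddable the inclusion $\clc\hookrightarrow\clq$ is the obvious $\psi$, and the intertwining is immediate from $\alpha=\Delta|_{\clc}$ (one should perhaps remark that $\Delta(\clc)\subseteq\clc\ot\clq$, which follows from coassociativity and the defining relation $(\pi\ot id)\Delta(x)=1\ot x$, so that $\alpha$ really lands where it should; but this is implicit in the definition). For embeddable $\Rightarrow$ homogeneous your Haar-state computation is the standard one and is correct.

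For part b) your reduction to ``homogeneous $\Rightarrow$ quotient'' in the commutative case is the right move, and the ergodicity $\Rightarrow$ transitivity step is sound. One small comment: rather than invoking that $X/G$ is Hausdorff (true, but itself requiring an argument), it is slightly quicker to stay in $X$: two distinct orbits are disjoint compact sets in the normal space $X$, so Urysohn gives $f$ separating them, and the Haar average $F(x)=\int_G f(x\cdot g)\,dg$ is a continuous $G$-invariant function with $F\equiv 0$ on one orbit and $F\equiv 1$ on the other. Your remark about matching the handedness is well taken: with $\alpha:\clb\to\clb\ot\clq$ the underlying action is a right action, the stabilizer quotient is $H\backslash G$, and this matches the paper's convention $\clc=\{x:(\pi\ot id)\Delta(x)=1\ot x\}$ (as used later in Section~\ref{notun section}).
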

We refer the reader to \cite{podles} for more discussions on these three types of quantum spaces.

\bdfn
For a linear map $\clp:\clq_0\rightarrow\clb,$ where $\clb$ is a $\ast$-algebra, define $\wtil{\clp}:\clq_0\rightarrow\clq_0\ot\clb$ by $\wtil{\clp}:=(id\ot\clp)\circ\Delta.$ For two such maps $\clp_1,\clp_2,$ define $\clp_1\ast\clp_2:=m_\clb\circ(\clp_1\ot\clp_2)\circ\Delta,$ where $m_\clb$ denotes the multiplication in $\clb.$
\edfn 
It follows  that $(id_{\clq_0}\ot m_\clb)\circ(\wtil{\clp_1}\ot id_\clb)\circ\wtil{\clp_2}=\wtil{\clp_1\ast\clp_2}.$ Observe that 
$(id\ot\widetilde{\clp})\Delta=\Delta\circ\widetilde{\clp}.$

\subsubsection{Rieffel Deformation}\label{deformation defined by reiffel}
Let $\theta=((\theta_{kl}))$ be a skew symmetric matrix of order $n.$ We denote by $C^*(\IT^n_\theta)$ the universal $C^*$-algebra generated by $n$ unitaries $(U_1,U_2,...U_n)$ satisfying $U_kU_l=e^{2\pi\theta_{kl}}U_lU_k,$ for $k\neq l.$ If $\theta_{kl}=\theta_0$ for 
$k<l,$ where $\theta_0\in\IR,$ we will denote the corresponding universal $C^*$-algebra by $C^*(\IT^n_{\theta_0})$ and $\clw$ will denote the $\ast$-subalgebra generated by  unitaries $U_1,U_2,...U_n.$

\paragraph{}
Let $\cla$ be a unital $C^*$-algebra on which there is a strongly continuous $\ast$-automorphic action $\sigma$ of $\IT^n.$ Denote by $\tau$ the natural action of $\IT^n$ on $C^*(\IT^n_\theta)$ given on the generators $U_i's$ by $\tau(\overline{z})U_i=z_iU_i,$ where $\overline{z}=(z_1,z_2,...z_n)\in\IT^n.$ Let $\tau^{-1}$ denote the inverse action $s\rightarrow\tau_{-s}.$
We refer the reader to \cite{rieffel} for the original approach of Rieffel using twisted convolution.
\bdfn
The fixed point algebra of $\cla\ot C^*(\IT^n_\theta),$ under the action $(\sigma\times\tau^{-1}),$ i.e.
$(\cla\ot C^*(\IT^n_\theta))^{\sigma\times\tau^{-1}},$ is called the Rieffel deformation of $\cla$ under the action $\sigma$ of $\IT^n,$ and is denoted by $\cla_\theta.$
\edfn
There is a natural isomorphism between $(\cla_\theta)_{-\theta}$ and $\cla,$ given by the identification of $\cla$ with the subalgebra of $(\cla\ot C^*(\IT^{n}_\theta)\ot C^*(\IT^n_\theta))^{(\sigma\ot id)\times \tau^{-1}}$ generated by elements of the form $a_{\overline{p}}\ot U^{\overline{p}}\ot( U^{\prime})^{\overline{p}},$ where $\overline{p}=(p_1,p_2,...p_n)\in\IZ^n,$ $U^{\overline{p}}:=U_1^{p_1}U_2^{p_2}...U_n^{p_n},$ $(U^\prime)^{\overline{p}}:=(U^\prime_1)^{p_1}(U^\prime_2)^{p_2}....(U_n^\prime)^{p_n},$ $U_1^\prime,U_2^\prime,...U_n^\prime$ being the generators of $C^*(\IT^n_{-\theta})$ and $a_{\overline{p}}$ belongs to the spectral subspace of the action $\sigma$ corresponding to the character $\overline{p}.$ 

Let $\clq$ be a CQG with coproduct $\Delta$ and assume that there exists a surjective CQG morphism $\pi:\clq\rightarrow C(\IT^n)$ which identifies $C(\IT^n)$ as a quantum subgroup of $\clq.$ For $s\in\IT^n,$ let $\Omega(s)$ denote the state defined by $\Omega(s):=ev_s\circ\pi,$ where $ev_s$ denotes evaluation at $s.$ Define an action of $\IT^{2n}$ on $\clq$ by $(s,u)\rightarrow \chi_{(s,u)},$ where $\chi_{(s,u)}:=(\Omega(s)\ot id)\circ\Delta\circ(id\ot\Omega(-u))\circ\Delta.$ It has been shown in \cite{wang} that the Rieffel deformation $\clq_{\widetilde{\theta},-\widetilde{\theta}}$ of $\clq$ with respect to 
$\widetilde{\theta}:=\begin{pmatrix}
 0&\theta\\
-\theta&0 
 \end{pmatrix}
$ can be given a unique CQG structure such that the such that the Hopf$\ast$ algebra of $\clq_{\theta,-\theta}$ is isomorphic as a coalgebra with the cannonical Hopf$\ast$ algebra of $\clq.$ 

\subsubsection{Quantum Isometry group}\label{QISO} We begin by defining spectral triple (also called spectral data). We shall refer the reader to \cite{connes} and \cite{jyoti} for details.
\bdfn
An odd spectral triple or spectral data is a triple $(\cla^\infty,H,\cld)$ where $H$
is a separable Hilbert space, $\cla^\infty$ is a $*$-subalgebra of $B(H)$,(not necessarily norm closed) and $\cld$ is a self adjoint (typically unbounded) operator such that for all $a$ in $\cla^\infty,$ the operator $[\cld,a]$ has a bounded extension. Such a spectral triple is also called an odd spectral triple. If in addition, we have $\gamma$ in $B(H)$ satisfying $\gamma=\gamma^∗=\gamma^{−1},~\cld\gamma=−\gamma\cld$ and $[a,\gamma]=0$ for all $a$ in $\cla^\infty,$ then we say that the quadruplet $(\cla^\infty,H,\cld,\gamma)$ is an even spectral triple. The operator $\cld$ is called the Dirac operator corresponding to the spectral triple.
\edfn
Since in the classical case, the Dirac operator has compact resolvent if the manifold is compact, we say that the spectral triple is of compact type if $\cla^\infty$ is unital and $\cld$ has compact resolvent. A spectral triple $(\cla^\infty,H,\cld)$ will be called $\Theta$ summable if $e^{-t\cld^2}$ is a trace class operator ($t\geq0$). Next we discuss the notion of Hilbert space of $k$-forms in non-commutative geometry.
\begin{ppsn}Given an algebra $\clb,$ there is a (unique upto isomorphism)$\clb-\clb$ bimodule $\Omega^1(\clb)$ and a derivation $\delta:\clb\rightarrow\Omega^1(\clb),$ satisfying the following properties:
\begin{enumerate}
 \item $\Omega^1(\clb)$ is spanned as a vector space by elements of the form $a\delta(b)$ with $a$, $b$ belonging to $\clb$; and
\item for any $\clb-\clb$ bimodule $E$ and a derivation $d:\clb\rightarrow E,$ there is an unique $\clb-\clb$ linear map $\eta:\Omega^1(\clb)\rightarrow E$ such that $d=\eta\circ\delta.$
\end{enumerate}
\end{ppsn}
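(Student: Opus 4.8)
The plan is to carry out the standard construction of the universal first-order differential calculus (the noncommutative analogue of Kähler differentials); I will treat the unital case explicitly and reduce the general case to it at the end. First I would let $m:\clb\ot\clb\to\clb$ be the multiplication map $m(a\ot b)=ab$, equip $\clb\ot\clb$ with the obvious $\clb$-$\clb$ bimodule structure $c\cdot(a\ot b)\cdot d:=(ca)\ot(bd)$, and set $\Omega^1(\clb):=\ker m$, which is a sub-bimodule because $m$ is a bimodule map. I would then define $\delta:\clb\to\Omega^1(\clb)$ by $\delta(b):=1\ot b-b\ot1$; one checks immediately that $m(\delta(b))=b-b=0$, so $\delta$ indeed maps into $\ker m$, and the short computation $a\,\delta(b)+\delta(a)\,b=(a\ot b-ab\ot1)+(1\ot ab-a\ot b)=1\ot ab-ab\ot1=\delta(ab)$ shows $\delta$ is a derivation.

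Next I would verify property (1): if $\omega=\sum_i a_i\ot b_i\in\ker m$, then $\sum_i a_ib_i=0$, whence
\[\omega=\sum_i a_i\ot b_i-\Big(\sum_i a_ib_i\Big)\ot1=\sum_i a_i\bigl(1\ot b_i-b_i\ot1\bigr)=\sum_i a_i\,\delta(b_i),\]
so $\Omega^1(\clb)$ is spanned by the elements $a\,\delta(b)$. For the \emph{universal property} (2), given a $\clb$-$\clb$ bimodule $E$ and a derivation $d:\clb\to E$, one first notes $d(1)=0$ (from $d(1)=d(1\cdot1)=2d(1)$). I would define $\tilde\eta:\clb\ot\clb\to E$ on the full tensor product by $\tilde\eta(a\ot b):=a\,d(b)$ — manifestly well defined and linear — and then put $\eta:=\tilde\eta|_{\Omega^1(\clb)}$. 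Using $d(1)=0$ and the identity above, $\eta\bigl(\sum_i a_i\delta(b_i)\bigr)=\sum_i a_i\,d(b_i)$; a direct check on $a\,\delta(b)=a\ot b-ab\ot1$ shows $\eta$ is both left and right $\clb$-linear, and $\eta(\delta(b))=1\cdot d(b)-b\cdot d(1)=d(b)$, i.e. $d=\eta\circ\delta$. Uniqueness of $\eta$ is forced: any bimodule map $\eta'$ with $d=\eta'\circ\delta$ must satisfy $\eta'(a\,\delta(b))=a\,\eta'(\delta(b))=a\,d(b)$, and such elements span $\Omega^1(\clb)$.

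Finally, uniqueness of the pair $(\Omega^1(\clb),\delta)$ up to isomorphism is the usual formal argument: if $(\Omega',\delta')$ satisfies the same property, applying the universal property in both directions produces bimodule maps $\Omega^1(\clb)\to\Omega'$ and $\Omega'\to\Omega^1(\clb)$ intertwining $\delta$ with $\delta'$, and by the uniqueness clause their composites are the respective identity maps. I do not expect a genuine obstacle here; the only points requiring (routine) care are the well-definedness of $\eta$, which is why one defines $\tilde\eta$ on all of $\clb\ot\clb$ before restricting, and the non-unital case, which is handled by passing to the unitization $\clb^+=\clb\oplus\IC$, taking $\Omega^1(\clb):=\ker(\,\clb^+\ot\clb\to\clb\,)$ with $\delta(b)=1\ot b-b\ot1$ (now $1\in\clb^+$), and re-running the same verifications for derivations valued in $\clb$-bimodules.
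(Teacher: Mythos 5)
Your proof is correct: the construction $\Omega^1(\clb)=\ker\bigl(m:\clb\ot\clb\to\clb\bigr)$ with $\delta(b)=1\ot b-b\ot1$, the spanning identity $\sum_i a_i\ot b_i=\sum_i a_i\delta(b_i)$ on $\ker m$, the factorization $\eta(a\delta(b))=a\,d(b)$ (with right-linearity checked on $\ker m$, where it does hold even though it fails on all of $\clb\ot\clb$), and the formal uniqueness argument are all sound. The paper itself offers no proof of this proposition, citing it as a standard fact about universal $1$-forms; your argument is exactly the standard universal-calculus construction that the cited sources use, so there is nothing to compare beyond noting that the paper works in the unital setting, where your main argument applies directly.
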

The bimodule $\Omega^1(\clb)$ is called the space of universal 1-forms an $\clb$ and $\delta$ is called the universal derivation. Given a $\Theta$-summable spectral triple, $(\cla^\infty,H,\cld),$ it is possible to define an inner product structure on $\Omega^0(\cla^\infty)\equiv\cla^\infty$ and $\Omega^1(\cla^\infty).$ The corresponding Hilbert spaces are are denoted by $\clh^0_\cld$ and $\clh^1_\cld$ and are called the Hilbert spaces of zero and one forms respectively (see \cite{jyoti}).
 
\paragraph{}
We now define quantum isometry group.
Let $(\cla^\infty,H,\cld)$ be a $\Theta$-summable spectral triple which is admissible in the sense that it satisfies the regularity conditions (i)-(v) as given in \cite[pages 9-10]{goswami}.\\ Let $\cll:=-d_{\cld}^*d_{\cld },$ which is a densely defined self-adjoint operator on $\clh_0$ and is called the Laplacian of the spectral triple.
We will denote by $\IQ^{\prime,\cll}$ the category whose objects are triplets $(S,\Delta,\alpha)$ where $(S,\Delta)$ is a CQG acting smoothly and isometrically on the given noncommutative manifold, with $\alpha$ being the corresponding action.
\begin{ppsn}\label{DG}\cite{goswami}
For any admissible spectral triple $(\cla^\infty,H,\cld),$ the category $\IQ^{\prime,\cll}$ has a universal object denoted by $(QISO^\cll,\alpha_0).$ Moreover, $QISO^\cll$ has a coproduct $\Delta_0$ such that $(QISO^\cll,\Delta_0)$ is a CQG and $(QISO^\cll,\Delta_0,\alpha_0)$ is a universal object in the category $\IQ^{\prime,\cll}.$ The action $\alpha_0$ is faithful.
\end{ppsn}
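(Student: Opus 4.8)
The plan is to construct $QISO^\cll$ explicitly, as a quotient of a free product of Wang's universal quantum groups $A_u(Q)$, along the lines of \cite{goswami}. First I would exploit the hypotheses: since the spectral triple is of compact type and $\Theta$-summable, $\cld$ has compact resolvent, so the Laplacian $\cll=-d_\cld^*d_\cld$ is a non-positive self-adjoint operator on $\clh_0$ with compact resolvent, and $\clh_0=\overline{\bigoplus_i V_i}$ with each eigenspace $V_i$ finite-dimensional, $d_i:=\dim V_i$. The admissibility (regularity) conditions guarantee that each $V_i\subseteq\cla^\infty$, that $\cla_0:=\bigoplus_i^{\mathrm{alg}}V_i$ is a norm-dense $\ast$-subalgebra of $\cla:=\overline{\cla^\infty}$, and that $\cla_0$ sits densely in $\clh_0$. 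The crucial first observation is that, by the very definition of a smooth isometric action, any object $(S,\Delta,\alpha)$ of $\IQ^{\prime,\cll}$ gives rise to a unitary corepresentation $U$ of $S$ on $\clh_0$ implementing $\alpha$ and commuting with $\cll\ot 1$; therefore $U$ preserves each $V_i$, and $U_i:=U|_{V_i}$ is a $d_i$-dimensional unitary corepresentation of $S$.

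Next I would write down the universal candidate. Fixing for each $i$ an orthonormal basis $\{e^{(i)}_1,\dots,e^{(i)}_{d_i}\}$ of $V_i$ with respect to the inner product for which the canonical functional is standard on $V_i$, and letting $Q_i>0$ be the positive matrix governing the conjugate corepresentation, the universal property of $A_u(Q_i)$ produces from every such $U_i$ a unique CQG morphism $A_u(Q_i)\to S$. Let $\clu$ be the free product of the family $\{A_u(Q_i)\}_i$, with canonical Hopf $\ast$-algebra $\clu_0$ and matrix generators $u^{(i)}_{jk}$, and define a linear map $\wtil\alpha:\cla_0\to\cla_0\ota\clu_0$ by $\wtil\alpha(e^{(i)}_k):=\sum_{j=1}^{d_i}e^{(i)}_j\ot u^{(i)}_{jk}$. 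This $\wtil\alpha$ is coassociative and ``$\cll$-equivariant'' by construction, but a priori neither multiplicative nor $\ast$-preserving on $\cla_0$.

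The core step is the passage to a quotient. I would let $\cli_0$ be the intersection of all Hopf $\ast$-ideals $\clj\subseteq\clu_0$ for which $(\mathrm{id}\ot q_\clj)\circ\wtil\alpha$ is a $\ast$-homomorphism on $\cla_0$, $q_\clj$ the quotient map, and check that $\cli_0$ itself has this property — this holds because the defects $\wtil\alpha(ab)-\wtil\alpha(a)\wtil\alpha(b)$ and $\wtil\alpha(a^*)-\wtil\alpha(a)^*$, expanded in the bases $\{e^{(i)}_j\}$, translate into \emph{linear} relations among the coefficients in $\clu_0$. Setting $\clq_0:=\clu_0/\cli_0$ (a Hopf $\ast$-algebra), I would complete it in the largest $C^\ast$-seminorm (finite on the generators, which are entries of unitary matrices) to obtain $QISO^\cll$, show that the coproduct of $\clu$ descends and extends continuously so that $(QISO^\cll,\Delta_0)$ is a CQG with $\clq_0$ as its canonical dense Hopf $\ast$-algebra, and verify that the descended map $\alpha_0:\cla_0\to\cla_0\ota\clq_0$ extends to a unital $C^\ast$-homomorphism $\cla\to\cla\ot QISO^\cll$ which (i) satisfies the Podles density condition — the span of $\alpha_0(\cla_0)(1\ot\clq_0)$ is all of $\cla_0\ota\clq_0$, using invertibility of the $u^{(i)}$ — (ii) is smooth, preserving $\cla^\infty$ since it is the identity on the eigenspace data, and (iii) is isometric, since each $V_i$ is preserved. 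Then $(QISO^\cll,\Delta_0,\alpha_0)$ is an object of $\IQ^{\prime,\cll}$.

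Finally, universality and faithfulness are formal. Given any $(S,\Delta,\alpha)$ in $\IQ^{\prime,\cll}$, the morphisms $A_u(Q_i)\to S$ constructed above assemble into a CQG morphism $\clu\to S$; because $\alpha$ is already a $\ast$-homomorphism, all the defect relations hold in $S$, so this morphism annihilates $\cli_0$ and factors through a unique CQG morphism $QISO^\cll\to S$, which intertwines $\alpha_0$ and $\alpha$ by construction — this gives the universal property. Uniqueness of the morphism, and faithfulness of $\alpha_0$, both follow from the fact that $QISO^\cll$ is generated as a $C^\ast$-algebra by the $u^{(i)}_{jk}$, i.e. by the ``matrix coefficients of $\alpha_0$'', so no proper Woronowicz $C^\ast$-subalgebra carries the action. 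I expect the main obstacle to be the third step: proving that the purely algebraic $\clq_0$ admits a $C^\ast$-completion that is a genuine CQG (the maximal seminorm being a norm after an appropriate further quotient, the coproduct surviving completion) and — the most delicate point — that the induced $\alpha_0$ really is a smooth coaction satisfying Podles nondegeneracy. This is precisely where the admissibility conditions on the spectral triple (finite-dimensionality and smoothness of the eigenspaces, density of $\cla_0$ in both $\cla$ and $\clh_0$) enter, together with the correct choice of inner product on each $V_i$ so that the relevant corepresentations are \emph{unitary} and not merely invertible — which is exactly what forces the use of $A_u(Q_i)$ in place of $A_u(d_i)$.
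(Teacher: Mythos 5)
This proposition is not proved in the paper at all --- it is stated with the citation \cite{goswami} as a known result --- so there is no internal argument to compare with; your sketch is essentially the construction of that reference (decomposition of $\clh^0_\cld$ into finite-dimensional eigenspaces of $\cll$, unitary corepresentations on each eigenspace giving morphisms from Wang's universal quantum groups, their free product, passage to the smallest Hopf $\ast$-ideal whose quotient makes the induced map a $\ast$-homomorphism, and then universality and faithfulness from the fact that the resulting $C^*$-algebra is generated by the matrix coefficients of the action). The only phrasing I would tighten is the claim that the multiplicativity and involution defects impose ``linear'' relations: the relevant point is simply that the condition on a Hopf $\ast$-ideal $\clj$ is that it contain a fixed family of elements of $\clu_0$ (the basis coefficients of the defects), so the collection of admissible ideals is nonempty (it contains $\ker\epsilon$, corresponding to the trivial action $a\mapsto a\ot 1$) and is stable under intersection, which is what makes $\cli_0$ well defined and proper.
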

The reader may see \cite{goswami} and \cite{jyoti} for further details of $QISO^\cll.$ We now give some examples of quantum isometry groups. 
\begin{enumerate}
\item {\bf non-commutative $2$-tori:} The non-commutative $2$-tori $C^*(\IT^2_\theta)$ is the universal $C^*$-algebra generated by a pair of unitaries $U,V$ such that $UV=e^{2\pi i\theta }VU$ i.e. Rieffel deformation of $C(\IT^2)$ with respect to 
$\begin{pmatrix}
 0&\theta\\
-\theta&0 
 \end{pmatrix}
.$
The $C^*$ algebra underlying the quantum isometry group of the standard spectral triple on $C^*(\IT^2_\theta)$ (see \cite{connes}) is given by\\ $\oplus_{i=1}^4(C(\IT^2)\oplus C^*(\IT^2_\theta))$ (see \cite{jyoti-dg}). Let $U_{k1},U_{k2}$ be the generators of $C(\IT^2)$ for odd $k$ and $C^*(\IT^2_\theta)$ for even $k,~k=1,2,...8.$ Define 
$$M=
\left(
\begin{array}{llll}
A_1 & A_2 & C_1^* & C_2^*\\
B_1 & B_2 & D_1^* & D_2^*\\
C_1 & C_2 & A_1^* & A_2^*\\
D_1 & D_2 & B_1^* & B_2^*
\end{array}
\right)
,$$ where $A_1=U_{11}+U_{41},$ $A_2=U_{62}+U_{72},$ $B_1=U_{52}+U_{61},$ $B_2=U_{12}+U_{22},$
$C_1=U_{21}+U_{31},$ $C_2=U_{51}+U_{82},$ $D_1=U_{71}+U_{81},$ $D_2=U_{32}+U_{42}.$ Then the coproduct $\Delta$ and the counit $\epsilon$ are given by $\Delta(M_{ij})=\sum_{k=1}^4 M_{ik}\ot M_{kj},$ $\epsilon(M_{ij})=\delta_{ij}.$ The action of the QISO on $C^*(\IT^2_\theta),$ say $\alpha,$ is given by 
\begin{equation*}
\begin{split}
&\alpha(U)=U\ot(U_{11}+U_{41})+V\ot(U_{52}+U_{61})+U^{-1}\ot(U_{21}+U_{31})+V^{-1}\ot(U_{71}+U_{81}),\\
&\alpha(V)=U\ot(U_{62}+U_{72})+V\ot(U_{12}+U_{22})+U^{-1}\ot(U_{51}+U_{82})+V^{-1}\ot(U_{32}+U_{42}).
\end{split}
\end{equation*}

\item {\bf The $\theta$ deformed sphere $S^{2n-1}_\theta$:} The non-commutative manifold $S^{2n-1}_\theta,$ for a skew symmetric matrix $\theta$ is the universal $C^*$-algebra generated by $2n$ elements $\{z^\mu,\overline{z}^\mu\}_{\mu=1,2,..2n},$ satisfying the relations:
\begin{itemize}
\item $(z^\mu)^*=\overline{z}^\mu;$
\item $z^\mu z^\nu=e^{2\pi i\theta_{\mu\nu}}z^\nu z^\mu,$ $\overline{z}^\mu z^\nu=e^{2\pi i\theta_{\nu\mu}}z^\nu\overline{z}^\mu;$
\item $\sum_{\mu=1}^{2n}z^\mu\overline{z}^\mu=1.$ 
\end{itemize}

The quantum isometry group of the spectral triples on $S^n_\theta,$ as described in \cite{connes,connes-dubois} is $O_\theta(n)$ whose CQG structure is described as follows: It is generated by $(a^{\mu}_{\nu},b^{\mu}_{\nu})_{\mu,\nu=1,2,...n},$ satisfying:
\begin{enumerate}
\item $a^{\mu}_{\nu}a^{\tau}_{\rho}=\lambda_{\mu\tau}\lambda_{\rho\nu}a^{\tau}_{\rho}a^{\mu}_{\nu},$ 
$a^{\mu}_{\nu}a^{*\tau}_{\rho}=\lambda^{\tau\mu}\lambda_{\nu\rho}a^{*\tau}_{\rho}a^{\mu}_{\nu},$ \item $a^{\mu}_{\nu}b^{\tau}_{\rho}=\lambda_{\mu\tau}\lambda_{\rho\nu}b^{\tau}_{\rho}a^{\mu}_{\nu},$ $a^{\mu}_{\nu}b^{*\tau}_{\rho}=\lambda^{\tau\mu}\lambda_{\nu\rho}b^{*\tau}_{\rho}a^{\mu}_{\nu},$ \item $b^{\mu}_{\nu}b^{\tau}_{\rho}=\lambda_{\mu\tau}\lambda_{\rho\nu}b^{\tau}_{\rho}b^{\mu}_{\nu},$ $b^{\mu}_{\nu}b^{*\tau}_{\rho}=\lambda^{\tau\mu}\lambda_{\nu\rho}b^{*\tau}_{\rho}b^{\mu}_{\nu},$ \item $\sum_{\mu=1}^n(a^{*\mu}_{\alpha}a^{\mu}_{\beta}+b^{\mu}_{\alpha}b^{*\mu}_{\beta})=\delta_{\alpha\beta}1,$ $\sum_{\mu=1}^n(a^{*\mu}_{\alpha}b^{\mu}_{\beta}+b^{\mu}_{\alpha}a^{*\mu}_{\beta})=0,$ 
\end{enumerate}

The coproduct $\Delta$ is given by
$\Delta(a^{\mu}_{\nu})=\sum_{\lambda=1}^na^{\mu}_{\lambda}\ot a^{\lambda}_{\nu}+\sum_{\lambda=1}^nb^{\mu}_{\lambda}\ot b^{*\lambda}_{\nu},$\\ $\Delta(b^{\mu}_{\nu})=\sum_{\lambda=1}^na^{\mu}_{\lambda}\ot b^{\lambda}_{\nu}+\sum_{\lambda=1}^nb^{\mu}_{\lambda}\ot a^{*\lambda}_{\nu};$  
and the counit $\epsilon$ is given by $\epsilon(a^{\mu}_{\nu})=\delta_{\mu\nu},$ $\epsilon(b^{\mu}_{\nu})=0.$

The action of the QISO on $S^{2n-1}_\theta,$ say $\alpha$ is given by
\begin{equation*}
\alpha(z^\mu)=\sum_\nu(z^\nu\ot a^\mu_\nu +\overline{z}^\nu\ot b^\mu_\nu),~\alpha(\overline{z}^\mu)=\sum_{\nu}(\overline{z}^\nu\ot\overline{a}^\mu_\nu+z^\nu\ot\overline{b}^\mu_\nu).  
\end{equation*}
 
\item {\bf The free sphere $S^{n-1}_{+}$: } The free sphere denoted by $S^{n-1}_{+}$ is defined as the universal $C^*$ algebra generated by elements 
$\{x_{i}\}_{i=1}^{n-1}$ satisfying $x_{i}=x_{i}^*$ and $\sum_{i=1}^{n-1}x^2_i=1.$  Consider the spectral triples as described in Theorem 6.4 in page 13 of \cite{dg-banica}. It has been shown (see \cite{dg-banica}) that the quantum isometry group associated to this spectral triple is the free orthogonal group $C^*(O_+(n))$ which is described as the universal $C^*$-algebra generated by $n^2$ elements $\{x_{ij}\}_{i,j=1}^n$ satisfying 
\begin{enumerate}
\item[a.] $x_{ij}=x_{ij}^*$ for $i,j=1,2,...n;$
\item[b.] $\sum_{k=1}^nx_{ki}x_{kj}=\delta_{ij}{\bf 1},$ $\sum_{k=1}^nx_{ik}x_{jk}=\delta_{ij}{\bf 1}.$
\end{enumerate}
\end{enumerate}
For more examples, we refer the reader to \cite{jyoti}.
 
\subsubsection{Algebraic Theory of Levy processes on involutive bialgebras}
We refer the reader to \cite{franz1} and \cite{schurmann1} for the basics of the algebraic theory of Levy processes on involutive bialgebras, which we briefly review here.
\bdfn
Let $\clb$ be an involutive bialgebra with coproduct $\Delta$. A quantum stochastic process $(l_{st})_{0\leq s\leq t}$ on $\clb$ over some quantum probability space $(\cla,\Phi)$ (i.e. $\cla$ is a unital $\ast$-algebra, $\Phi$ is a positive functional such that $\Phi(1)=1$) is called a Levy process, if the following four conditions are satisfied:
\begin{enumerate}
\item (increment property) We have $l_{rs}*l_{st}=l_{rt}$ for all $0\leq r\leq s\leq t,$  $l_{tt}=1\circ\epsilon$ for all $t\geq0,$ where $l_{rs}*l_{st}:=m_{\cla}\circ(l_{rs}\ot l_{st})\circ\Delta.$
\item (independence of increments) The family $(l_{st})_{0\leq s\leq t}$ is independent, i.e. the quantum random variables $l_{s_1t_1},l_{s_2t_2},....l_{s_nt_n}$ are independent for all $n\in\IN$ and all $0\leq s_1\leq t_1\leq....t_n.$
\item (Stationarity of increments) The marginal distribution $\phi_{st}:=\Phi\circ l_{st}$ of $j_{st}$ depends only on the difference $t-s.$
\item (Weak continuity) The quantum random variables $l_{st}$ converge to $l_{ss}$ in distribution for $t\rightarrow s.$\end{enumerate}
\edfn
Define $l_t:=l_{0t}.$

Due to stationarity of increments, it is meaningful to define the marginal distributions of $(l_{st})_{0\leq s\leq t}$ by $\phi_{t-s}=\Phi\circ l_{st}.$
\begin{lmma}(\cite{franz1}).
The marginal distributions $(\phi_t)_{t\geq0}$ form a convolution semigroup of states on $\clb$ i.e. they satisfy 
\begin{enumerate}
\item $\phi_0=\epsilon,~\phi_t*\phi_s=\phi_{t+s}$ for all $s,t\geq0,$ and $\lim_{t\rightarrow0}\phi_t(b)=\epsilon(b)$ for all $b\in\clb.$
\item $\phi_t(1)=1$ and $\phi_t(b^*b)\geq0$ for all $t\geq0$ and all $b\in\clb.$
\end{enumerate}
\end{lmma}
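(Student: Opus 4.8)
The plan is to derive each assertion directly from the four defining axioms of a L\'evy process, using only that each $l_{st}$ is a unital $\ast$-homomorphism and that $\Phi$ is a state; there are no analytic subtleties, and the argument is purely algebraic apart from a single appeal to weak continuity.

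First I would dispose of the positivity statements in item (2). Since $l_{0t}$ is a unital $\ast$-homomorphism, $l_{0t}(1)=1_\cla$, whence $\phi_t(1)=\Phi(1_\cla)=1$; and $\phi_t(b^*b)=\Phi\bigl(l_{0t}(b)^*\,l_{0t}(b)\bigr)\geq 0$ because $\Phi$ is a positive functional. Next, $\phi_0=\epsilon$ follows from the increment property $l_{tt}=1\circ\epsilon$, which gives $\phi_0=\Phi\circ l_{tt}=\Phi(1_\cla)\,\epsilon=\epsilon$. Finally, $\lim_{t\to 0}\phi_t(b)=\epsilon(b)$ is precisely the weak-continuity axiom applied to $l_{0t}\to l_{00}$ in distribution as $t\to 0$, combined with $\phi_0=\epsilon$.

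The only point that genuinely uses the structure of the process is the convolution identity $\phi_t*\phi_s=\phi_{t+s}$. Here I would apply the increment property with $r=0$: $l_{0,t+s}=l_{0,t}*l_{t,t+s}=m_\cla\circ(l_{0,t}\ot l_{t,t+s})\circ\Delta$. Writing $\Delta(b)=b_{(1)}\ot b_{(2)}$ in Sweedler notation, this reads $l_{0,t+s}(b)=l_{0,t}(b_{(1)})\,l_{t,t+s}(b_{(2)})$, so $\phi_{t+s}(b)=\Phi\bigl(l_{0,t}(b_{(1)})\,l_{t,t+s}(b_{(2)})\bigr)$. Then I would invoke independence of the increments $l_{0,t}$ and $l_{t,t+s}$ in the operational form that $\Phi$ multiplies over the subalgebras they generate, i.e.\ $\Phi\bigl(l_{0,t}(x)\,l_{t,t+s}(y)\bigr)=\Phi\bigl(l_{0,t}(x)\bigr)\,\Phi\bigl(l_{t,t+s}(y)\bigr)$ for all $x,y\in\clb$; applying this with $x=b_{(1)}$, $y=b_{(2)}$ and using stationarity of increments (so that $\Phi\circ l_{0,t}=\phi_t$ and $\Phi\circ l_{t,t+s}=\phi_s$) yields $\phi_{t+s}(b)=\phi_t(b_{(1)})\,\phi_s(b_{(2)})=(\phi_t\ot\phi_s)\bigl(\Delta(b)\bigr)=(\phi_t*\phi_s)(b)$.

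I expect the only real obstacle to be invoking the independence axiom in the correct form — one must know that for tensor (Bose) independence the expectation $\Phi$ factorizes over the two subalgebras generated by the increments — but once that is in place the rest is bookkeeping with the Hopf-algebra axioms. Assembling $\phi_0=\epsilon$, the convolution identity just proved, and the continuity statement then exhibits $(\phi_t)_{t\geq 0}$ as a convolution semigroup of states on $\clb$, which is the assertion of the lemma.
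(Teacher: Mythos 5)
Your proof is correct: the positivity and normalization follow exactly as you say from $l_{0t}$ being a unital $\ast$-homomorphism and $\Phi$ a state, and the convolution identity $\phi_t*\phi_s=\phi_{t+s}$ follows from the increment property with $r=0$ together with the factorization of $\Phi$ over independent increments (tensor independence, which is the notion in force here) and stationarity. The paper states this lemma without proof, citing \cite{franz1}, and your argument is precisely the standard derivation given there, so there is nothing to add.
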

This convolution semigroup characterizes a Levy process on an involutive bialgebra.
\bdfn
A functional $l:\clb\rightarrow\IC$ is called conditionally completely positive (CCP for short) functional if $l(b^*b)\geq0$ whenever $\epsilon(b)=0.$
\edfn 
The generator of the above convolution semigroup of states is a CCP functional on the bialgebra $\clb.$
\begin{ppsn}({\bf Schoenberg correspondence})\cite{franz1}
Let $\clb$ be an involutive bialgebra, $(\phi_t)_{t\geq0}$ a convolution semigroup of linear functionals on $\clb$ and $l$ be its generator, i.e. $l(a)=\frac{d}{dt}|_{_{t=0}}\phi_t(a).$. Then the following are equivalent:
\begin{enumerate}
 \item $(\phi_t)_{t\geq0}$ is a convolution semigroup of states.
\item $l:\clb\rightarrow\IC$ satisfies $l(1)=0,$ and it is hermitian and CCP.
\end{enumerate}
\end{ppsn}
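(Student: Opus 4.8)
The implication $(1)\Rightarrow(2)$ is obtained by differentiating at $t=0$. If every $\phi_t$ is a state, then $\phi_t(1)=1$ gives $l(1)=\frac{d}{dt}\big|_{t=0}\phi_t(1)=0$; hermiticity of $\phi_t$ gives $l(a^*)=\overline{l(a)}$ in the limit; and if $\epsilon(b)=0$ then $\phi_0(b^*b)=\epsilon(b^*b)=|\epsilon(b)|^2=0$ while $\phi_t(b^*b)\ge 0$ for $t>0$, so $l(b^*b)=\lim_{t\to 0^+}t^{-1}\phi_t(b^*b)\ge 0$; thus $l$ is hermitian, CCP and annihilates $1$. There is no difficulty here.

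For $(2)\Rightarrow(1)$ I would proceed in three steps. \emph{Step 1 (reduction to a convergent convolution exponential).} By the fundamental theorem of coalgebras $\clb$ is the directed union of its finite-dimensional subcoalgebras $C$, and $\psi_1\ast\psi_2=(\psi_1\ot\psi_2)\circ\Delta$ ``localizes'' to $C$ when the arguments lie in $C$; so it suffices to prove positivity of $\phi_t|_C$ for each such $C$. On the finite-dimensional space $C^*$ the one-parameter (hence commutative) semigroup law $\phi_{s+t}=\phi_s\ast\phi_t=\phi_t\ast\phi_s$ together with weak continuity yields the linear ODE $\frac{d}{dt}\phi_t=\phi_t\ast l=l\ast\phi_t$ with $\phi_0=\epsilon|_C$, whose unique solution is $\phi_t|_C=\exp_\ast(tl|_C)=\sum_{n\ge 0}\frac{t^n}{n!}l^{\ast n}|_C$. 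Applying this ODE to $1$ and using $\Delta(1)=1\ot 1$ and $l(1)=0$ gives $\frac{d}{dt}\phi_t(1)=\phi_t(1)l(1)=0$, hence $\phi_t(1)=\epsilon(1)=1$ for all $t$.

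\emph{Step 2 (the Schürmann triple).} Put $L(a,b):=l(a^*b)-\epsilon(a^*)l(b)-l(a^*)\epsilon(b)$. Since $\epsilon$ is a $\ast$-homomorphism and $l$ is hermitian, $L$ is a hermitian sesquilinear form; and writing $b':=b-\epsilon(b)1$ (so $\epsilon(b')=0$) one computes, using $l(1)=0$, that $L(b,b)=l((b')^*b')\ge 0$ by the CCP hypothesis, and more generally $\sum_{i,j}\overline{c_i}\,c_j\,L(b_i,b_j)=l((b'')^*b'')\ge 0$ with $b''=\big(\sum_i c_ib_i\big)-\epsilon\big(\sum_i c_ib_i\big)1$; thus $L$ is positive semidefinite. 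Let $D$ be the pre-Hilbert (then Hilbert) space obtained from $(\clb,L)$ by quotienting out the null space and completing, $\eta\colon\clb\to D$ the canonical map, and set $\rho(a)\eta(b):=\eta(ab)-\epsilon(b)\eta(a)$. One checks --- formal consequences of coassociativity and the counit axiom, plus a Cauchy--Schwarz estimate for well-definedness --- that $\rho$ is a $\ast$-representation of $\clb$ on $D$, that $\eta$ is the associated $(\rho,\epsilon)$-cocycle, $\eta(ab)=\rho(a)\eta(b)+\epsilon(b)\eta(a)$, and that $\langle\eta(a),\eta(b)\rangle=l(a^*b)-\epsilon(a^*)l(b)-l(a^*)\epsilon(b)$; i.e. $(\rho,\eta,l)$ is a Schürmann triple.

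\emph{Step 3 (Fock realization and positivity).} On the symmetric Fock space $\Gamma\big(L^2(\IR_+)\ot D\big)$ solve the quantum stochastic flow equation with initial condition $j_0=\epsilon(\cdot)1$ and creation/annihilation/conservation/time coefficients read off from the triple (schematically $dj_t=j_t\circ(a^\dagger_\eta(dt)+a_{\eta^\dagger}(dt)+\Lambda_{\rho-\epsilon\cdot\id}(dt)+l\,dt)$); by the existence theory for such QSDE (the bialgebra analogue of Theorem~\ref{THE THEOREM}, cf. the cited literature on quantum L\'evy processes) one obtains a family of $\ast$-homomorphisms $j_t\colon\clb\to B(\Gamma(L^2(\IR_+)\ot D))$ with $\langle e(0),j_t(a)e(0)\rangle=\phi_t(a)$. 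Then $\phi_t(b^*b)=\langle e(0),j_t(b)^*j_t(b)e(0)\rangle=\|j_t(b)e(0)\|^2\ge 0$, which together with Step 1 shows that each $\phi_t$ is a state. (Alternatively one can bypass Fock space: the hermitian form $F_t(a,b):=\phi_t(a^*b)$ satisfies, by Steps 1--2, a linear differential equation whose generator has Lindblad-type structure --- a Schur-product source built from $F_t$ and the positive form $L$, plus cocycle drift terms --- and a Duhamel/Gr\"onwall argument keeps $F_t$ positive semidefinite, $F_0$ being the rank-one form $\overline{\epsilon(a)}\epsilon(b)$.) The non-formal part, and hence the main obstacle, is precisely Step 3: producing the quantum stochastic realization (existence and $\ast$-homomorphy of the flow and the identification of its vacuum expectation with $\phi_t$), or in the elementary variant the positivity-preservation estimate; Steps 1 and 2 are bookkeeping with coalgebra identities.
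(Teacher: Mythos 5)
The paper does not prove this proposition at all: it is quoted from \cite{franz1} (the result goes back to Sch\"urmann) and is used as a black box, so there is no in-paper argument to compare with. What you wrote is essentially the standard proof from the cited sources. Your $(1)\Rightarrow(2)$ is the routine differentiation argument and is fine. For $(2)\Rightarrow(1)$, Steps 1--2 (localization to finite-dimensional subcoalgebras via the fundamental theorem of coalgebras, the identification $\phi_t=\exp_\ast(tl)$ there together with $\phi_t(1)=1$, and the GNS-type construction of the Sch\"urmann triple from the hermitian CCP functional, including the Cauchy--Schwarz estimate that makes $\rho$ well defined on the quotient) are correct and are exactly the bookkeeping carried out in \cite{franz1} and \cite{schurmann1}. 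Step 3 delegates the genuinely hard part to Sch\"urmann's representation theorem, i.e.\ the existence of the Fock-space L\'evy process with prescribed triple whose vacuum expectations are $\exp_\ast(tl)$; this is legitimate and not circular, since that theorem is proved without assuming positivity of $\phi_t$, and it is the same device the paper itself quotes as equation (\ref{schurmann's equation}) --- indeed this is how positivity of $\phi_t(b^*b)=\|j_t(b)e(0)\|^2$ is obtained in the literature. The only soft spot is your parenthetical ``elementary'' alternative: the assertion that a Duhamel/Gr\"onwall argument preserves positive semidefiniteness of $F_t(a,b)=\phi_t(a^*b)$ is not worked out, and the cocycle drift terms in the stated Lindblad-type decomposition are not positivity-preserving on their own, so as written that variant is a sketch rather than a proof; since it is offered only as an alternative to the Fock-space route, the proposal as a whole stands, with its validity resting on the quoted representation theorem.
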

Next we define Schurmann triple on $\clb.$
\bdfn
Let $\clb$ be a unital $\ast$-algebra equipped with a unital hermitian character $\epsilon.$ A Schurmann triple on $(\clb,\epsilon)$ is a triple $(\rho,\eta,l)$ consisting of 
\begin{enumerate}
 \item a unital $\ast$-representation $\rho:\clb\rightarrow\cll(\cld)$ of $\clb$ on some pre-Hilbert space $D,$
\item a $\rho-\epsilon-1$-cocycle $\eta:\clb\rightarrow \cld,$ i.e. a linear map $\eta:\clb\rightarrow D$ such that $$\eta(ab)=\rho(a)\eta(b)+\eta(a)\epsilon(b)$$ for all $a,b\in\clb,$
\item and a hermitian linear functional $l:\clb\rightarrow\IC$ that satisfies
$$l(ab)=l(a)\epsilon(b)+\epsilon(a)l(b)+\innerl\eta(a^*),\eta(b)\innerr$$ for all $b\in\clb.$
\end{enumerate}
\edfn
A Schurmann triple is called surjective if the cocycle $\eta$ is surjective. Upto unitary equivalence, we have a one-to-one correspondence between Levy processes on $\clb,$ convolution semigroup of states on $\clb$ and surjective Schurmann triples on $\clb.$ Choosing an orthonormal basis $(e_i)_i$ of $\cld,$ we can write $\eta$ as $\eta(\cdot)=\sum_i\eta_i(\cdot)e_i.$ The $\eta_i's$ will be called the `coordinate' of the cocycle $\eta.$

We will denote by $\clv_{_{\cla}},$ the vector space of $\epsilon$-derivations on $\cla_0,$ i.e. for $\clv_{_{\cla}}$ consists of all maps $\eta:\cla_0\rightarrow\IC,$ such that $\eta(ab)=\eta(a)\epsilon(b)+\epsilon(a)\eta(b).$
\begin{lmma}\label{bounded by the noise space}
Let $l$ be the generator of a Gaussian process on $\cla_0.$ Suppose that $(l.\eta.\epsilon)$ be the surjective Schurmann triple associated to $l.$ Let $d:=dim\clv_{_{\cla}}.$ Then there can be atmost $d$ coordinates of $\eta.$
\end{lmma}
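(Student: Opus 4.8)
The plan is to exploit the fact that, for a Gaussian generator, the $\ast$-representation occurring in its (surjective) Schurmann triple is forced to be trivial, and then reduce the statement to an elementary linear-algebra argument about scalar $\epsilon$-derivations.

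\emph{Step 1: the cocycle is an $\cld$-valued $\epsilon$-derivation.} First I would recall the standard characterization of Gaussianity in the algebraic framework: $l$ generates a Gaussian process on $\cla_0$ precisely when the $\ast$-representation $\rho$ appearing in its surjective Schurmann triple $(\rho,\eta,l)$ is trivial, i.e. $\rho(a)=\epsilon(a)\one_{\cld}$ for all $a$ (see \cite{franz1}; equivalently, $\eta$ vanishes on products of elements of $\ker\epsilon$). Granting this, the cocycle identity $\eta(ab)=\rho(a)\eta(b)+\eta(a)\epsilon(b)$ collapses to $\eta(ab)=\epsilon(a)\eta(b)+\eta(a)\epsilon(b)$, so that $\eta$ is a $\cld$-valued $\epsilon$-derivation on $\cla_0$. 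Fixing an orthonormal basis $(e_i)_i$ of $\cld$ and writing $\eta(\cdot)=\sum_i\eta_i(\cdot)e_i$, each coordinate $\eta_i=\innerl\eta(\cdot),e_i\innerr$ is then a scalar-valued $\epsilon$-derivation on $\cla_0$, hence $\eta_i\in\clv_{_{\cla}}$.

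\emph{Step 2: the coordinates are linearly independent, and the count follows.} Next I would show that the family $\{\eta_i\}_i$ is linearly independent in $\clv_{_{\cla}}$. Suppose $\sum_i c_i\eta_i=0$ for finitely many scalars $c_i$, not all zero, and set $v:=\sum_i\overline{c_i}\,e_i\in\cld$. Then for every $a\in\cla_0$ we have $\innerl\eta(a),v\innerr=\sum_i\eta_i(a)c_i=\bigl(\sum_i c_i\eta_i\bigr)(a)=0$, so $v$ is orthogonal to the range of $\eta$. But surjectivity of the Schurmann triple forces $\mathrm{range}(\eta)=\cld$, so $\innerl v,v\innerr=0$ and hence $v=0$, a contradiction. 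Therefore the number of coordinates of $\eta$, namely $\dim\cld$, is at most $\dim\clv_{_{\cla}}=d$, which is the assertion.

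\emph{Where the work lies.} The only non-formal ingredient is the identification of "Gaussian" with triviality of $\rho$; once that is in hand the argument is purely linear-algebraic, and the surjectivity hypothesis on the Schurmann triple is exactly what rules out redundancy among the coordinates. So the main point to be careful about is bookkeeping: making sure that the notion of Gaussian process being used on $\cla_0$ (inherited, e.g., from the quantum isometry group via the action) does correspond to $\rho=\epsilon(\cdot)\one_{\cld}$, which is standard but should be recorded explicitly; beyond that I do not anticipate any genuine obstacle.
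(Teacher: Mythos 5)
Your proof is correct and follows essentially the same route as the paper: the coordinates $\eta_i$ are $\epsilon$-derivations (the Gaussian condition, which you justify via $\rho=\epsilon(\cdot)\one$, i.e.\ condition (7) of Proposition \ref{franz}), and surjectivity of $\eta$ (totality of its range in the noise space) forces the $\eta_i$ to be linearly independent in $\clv_{_{\cla}}$, giving the bound $d$. The only difference is that you spell out explicitly the step the paper leaves as an observation, so there is nothing to add.
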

\begin{proof}
Let $(\eta_i)_i$ be the coordinates of $\eta.$ Observe that $\eta_i$ is an $\epsilon$-derivation for all $i.$ It is enough to prove that $\{\eta_i\}_i$ is a linearly independent set. Suppose that 
$\sum_{i=1}^{k}\lambda_i\eta_i(a)=0,$ for all $a\in\cla_0.$ This implies that 
$\innerl\eta(a),\sum_{i=1}^{k}\lambda_ie_i\innerr=0,$ for all $a\in\cla_0,$ where $(e_i)_i$ is an orthonormal basis for $k_0,$ the associated noise space. Since $\{\eta(a):a\in\cla_0\}$ is total in $k_0,$ we have $\sum_{i=1}^{k}\lambda_ie_i=0$ which implies that $\lambda_i=0$ for $i=1,2,...k.$ Hence proved.
\end{proof}
\paragraph{}
\begin{ppsn}\cite{franz1}\label{franz}
For a ganerator $l$ of a Levy process, the following are equivalent:
\begin{enumerate}
\item $l|_{K^3}=0,$ $K=ker\epsilon,$
\item $l(b^*b)=0$ for all $b\in K^2,$
\item $l(abc)=l(ab)\epsilon(c)-\epsilon(abl(c)+l(bc)\epsilon(a)-\epsilon(bc)l(a)+l(ac)\epsilon(b)-\epsilon(ac)l(b),$
\item $\rho|_{K}=0,$ for any surjective Schurmann triple,
\item $\rho=\epsilon1$ for any surjective Schurmann triple i.e. the process is "Gaussian",
\item $\eta|_{K^2}=0$ for any Schurmann triple,
\item $\eta(ab)=\eta(a)\epsilon(b)+\epsilon(a)\eta(b)$ for any Schurmann triple.
\end{enumerate}
\end{ppsn}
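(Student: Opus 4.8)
The plan is to establish the equivalences by exhibiting a cycle of implications that routes all seven statements through the structural properties of a Schürmann triple, exploiting that $K = \ker\epsilon$ is a two-sided ideal with $\clb = \IC 1 \oplus K$ (since $\epsilon$ is a unital character). First I would record the basic bookkeeping: for $a \in \clb$ write $a = \epsilon(a)1 + a^\circ$ with $a^\circ \in K$; then the cocycle identity $\eta(ab) = \rho(a)\eta(b) + \eta(a)\epsilon(b)$ together with $\eta(1) = 0$ shows $\eta|_{\IC 1} = 0$, and likewise $l(1) = 0$. The key observation driving everything is that on $K$ the cocycle identity simplifies: for $a, b \in K$ we get $\eta(ab) = \rho(a)\eta(b)$, and iterating, $\eta(abc) = \rho(a)\rho(b)\eta(c)$ for $a,b,c \in K$; similarly the functional identity gives $l(abc) = \langle \eta(a^*b^*), \eta(c)\rangle = \langle \rho(b^*)^*\eta(a^*)\,,\, ?\rangle$ — more usefully, $l(b^*b) = \|\eta(b)\|^2$ for $b \in K$ and $l((ab)^*(ab)) = \|\rho(a)\eta(b)\|^2$ for $a,b \in K$.

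The cycle I would run is roughly: (5) $\Rightarrow$ (4) $\Rightarrow$ (6) $\Rightarrow$ (7) $\Rightarrow$ (6) (trivial directions), then (6) $\Rightarrow$ (2): if $\eta|_{K^2} = 0$ then for $b \in K^2$, $l(b^*b) = \|\eta(b)\|^2 = 0$; then (2) $\Rightarrow$ (1): given $l(b^*b) = 0$ for $b \in K^2$, use the Schürmann (functional) identity and a polarization/Cauchy–Schwarz argument on the positive semidefinite form $(a,b) \mapsto l(a^*b)$ restricted to $K^2$ to deduce $l|_{K^4} = 0$, and then a short argument using $\clb = \IC1 \oplus K$ upgrades $K^4$ to $K^3$ (or one argues directly: $l|_{K^3}=0$ follows since for $a,b,c\in K$, $l(abc)$ is controlled by $l$ of squares of elements of $K^2$ via polarization). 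Next (1) $\Rightarrow$ (3) is a direct algebraic expansion: write each of $a,b,c$ as $\epsilon(\cdot)1 + (\cdot)^\circ$, expand $l(abc)$ multilinearly, note every term with all three "circle" parts lies in $l|_{K^3} = 0$, and collect the remaining terms into exactly the stated Leibniz-type formula. Then (3) $\Rightarrow$ (1) by plugging $a,b,c \in K$ into the formula, where all $\epsilon$-terms vanish.

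For the representation statements: (1) $\Rightarrow$ (4), i.e. $\rho|_K = 0$ for a \emph{surjective} Schürmann triple. Here I would use surjectivity crucially: $\eta(K)$ is dense (indeed $\eta(\clb) = \eta(K)$), and for $a \in K$ and $b \in K$, $\rho(a)\eta(b) = \eta(ab)$, so $\rho(a)$ is determined on a total set by its values $\eta(ab)$ with $ab \in K^2$. From (1) (equivalently (2), (6) which we will have shown equivalent to (1)) we get $\eta|_{K^2} = 0$: indeed $\|\eta(ab)\|^2 = l((ab)^*(ab)) = l(b^*a^*ab)$ and $a^*ab \in K^3$... one must be a little careful that $b^* a^* a b \in K^3$ when $a,b\in K$, which it is. Hence $\rho(a)\eta(b) = 0$ for all $b \in K$, and by totality of $\eta(K)$, $\rho(a) = 0$. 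Then (4) $\Rightarrow$ (5): $\rho|_K = 0$ and $\rho(1) = \id$ give $\rho(a) = \epsilon(a)\id$ for all $a$, which is exactly $\rho = \epsilon 1$. Finally (4) $\Rightarrow$ (6): from $\rho|_K = 0$, the simplified cocycle identity $\eta(ab) = \rho(a)\eta(b) = 0$ for $a, b \in K$ gives $\eta|_{K^2} = 0$, and (6) $\Leftrightarrow$ (7) is immediate by splitting into $\IC1 \oplus K$ components.

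The main obstacle is the implication (2) $\Rightarrow$ (1) (equivalently getting from vanishing of $l$ on squares in $K^2$ to vanishing on the whole of $K^3$): this is where one genuinely needs the positive semidefiniteness of the sesquilinear form $\langle a, b\rangle_l := l(a^* b)$ on $K$ — which holds because $l$ is CCP — followed by a Cauchy–Schwarz estimate $|l(a^*b)|^2 \le l(a^*a)\, l(b^*b)$ valid for $a, b \in K$, applied with suitable elements of $K^2$ to force $l|_{K^2 \cdot K^2} = 0$ and then a polarization identity over $K = \IC$-span of its own square... in fact the cleanest route is: (2) says $\eta|_{K^2} = 0$ (via $l(b^*b) = \|\eta(b)\|^2$ and polarization using the $l$-inner product), and then (1) is immediate from $\eta(abc) = \rho(a)\eta(bc)$ together with $l(abc) = \langle \eta(a^*), \eta(bc)\rangle + (\epsilon\text{-terms})$ and $bc \in K^2$. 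So the real content is the passage $l(b^*b) = 0 \text{ on } K^2 \;\Longrightarrow\; \eta|_{K^2} = 0$, which is the Schürmann-triple analogue of "a positive functional vanishing on $x^*x$ vanishes on $x$", and I expect this to be the step requiring the most care, particularly checking that all intermediate products land in the correct powers of $K$.
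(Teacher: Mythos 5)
The paper does not actually prove this proposition; it is quoted verbatim from Franz's thesis (\cite{franz1}), so there is no in-paper argument to compare against. Judged on its own, your proof is correct and is essentially the standard one. The cycle $(5)\Rightarrow(4)\Rightarrow(6)\Leftrightarrow(7)$, $(6)\Rightarrow(2)\Rightarrow(1)\Leftrightarrow(3)$, $(1)\Rightarrow(4)\Rightarrow(5)$ closes, and the individual steps check out: for $b\in K$ one has $l(b^*b)=\|\eta(b)\|^2$, so $(2)$ forces $\eta|_{K^2}=0$ and then $l(abc)=\langle\eta(a^*),\eta(bc)\rangle=0$ for $a,b,c\in K$; the expansion of $a,b,c$ into $\epsilon(\cdot)1+(\cdot)^\circ$ shows that the difference of the two sides of $(3)$ is exactly $l(a^\circ b^\circ c^\circ)$, giving $(1)\Leftrightarrow(3)$; and $\|\eta(ab)\|^2=l(b^*a^*ab)$ with $b^*a^*ab\in K^4\subseteq K^3$ gives $(1)\Rightarrow\eta|_{K^2}=0\Rightarrow\rho|_K=0$ on the span $\eta(K)=\eta(\clb)=D$ of a surjective triple. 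Two small points of hygiene. First, the quantifiers: $(4),(5)$ concern surjective triples while $(6),(7)$ concern arbitrary ones, so your arrow $(4)\Rightarrow(6)$ as written only yields $\eta|_{K^2}=0$ for the surjective triple; this is repaired at no cost by routing through $(2)$ (i.e.\ using that $\|\eta(b)\|^2=l(b^*b)$ for $b\in K$ depends only on $l$, an observation you in effect already use in your $(2)\Rightarrow(1)$ step), and one should also say once that a surjective Schurmann triple for $l$ exists, which is guaranteed by the correspondence quoted in the paper. Second, the exploratory detour through Cauchy--Schwarz, polarization and $l|_{K^4}$ in your $(2)\Rightarrow(1)$ discussion is unnecessary: your own ``cleanest route'' already works verbatim, since $l(b^*b)=0$ for $b\in K^2$ gives $\eta(b)=0$ directly, with no polarization needed. (Also note the statement of item $3$ in the paper has a typographical slip, $\epsilon(abl(c)$ for $\epsilon(ab)l(c)$, which your verification silently and correctly repairs.)
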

A generator $l$ satisfying any of the above conditions is called a {\it Gaussian generator} or the generator of a Gaussian process.
\bdfn
We will call a Gaussian Levy process the algebraic Quantum Brownian Motion (QBM for short) if span of the maps $\{\eta_i\}_i$ is the whole of $\clv_{_{\cla}},$ where $\eta_i$ are the `coordinates' of the cocycle of the unique (upto unitary equivalnece) surjective Schurmann triple associated to $l.$
\edfn
\paragraph{}
It is known \cite{schurmann1} that the following weak  stochastic  equation
\begin{equation}\label{schurmann's equation}
\begin{split}
\innerl l_{t}(x)e(f),e(g)\innerr&=\epsilon(x)\innerl e(f),e(g)\innerr\\
&+\int_0^t d\tau\innerl\{l_{\tau}\ast(l+\innerl g(\tau),\eta\innerr+\innerl\eta,f(\tau)\innerr+\innerl g(\tau),(\rho-\epsilon)_{_{f(\tau)}}\innerr)\}(x)e(f),e(g)\innerr,
\end{split}
\end{equation}
which can be symbolically written as
\begin{equation*}
dl_{t}=l_{t}*(dA^\dagger_t\circ\eta+d\Lambda_t\circ(\rho-\epsilon)+dA_t\circ\eta^\dagger+l dt)
\end{equation*}
with the initial conditions $$l_{0}=\epsilon1$$ has a unique solution $(l_{st})_{0\leq s\leq t}$ such that $l_{st}$ is an algebraic levy process on $\cla_0.$ Then using this algebraic quantum stochastic differential equation, it can be proved that $j_t=\widetilde{l}_t$ satisfies an EH type equation as defined in subsection \ref{Quantum stochastics} with $\delta=\wtil{\eta},~\cll=\widetilde{l},~\sigma=\wtil{\rho-\epsilon}.$ However, it is not clear whether $j_t(x)\in\cla_0\ota B(\Gamma(L^2(\IR_+,k_0))).$ We shall prove later that at least for Gaussian generators, this will be the case i.e. $j_t(x)$ is bounded.
 
\section{Quantum Brownian Motion on non-commutative manifolds}
\subsection{Analytic construction of Quantum Brownian motion}\label{analytic}
Let $(\clq,\Delta)$ be a CQG, $\clq_0$ be the corresponding Hopf-$\ast$ algebra and $h$ be the Haar state on $\clq.$ Let $\clq_0:=\oplus\clh_\pi$ be the decomposition obtained by Peter-Weyl theory as in section \ref{CQG}.
\bthm\label{theorem_semi}
Let $(T_t)_{t\geq0}$ be a QDS on $\clq$ such that it is left covariant in the sense that\\ $(id\ot T_t)\circ\Delta=\Delta\circ T_t.$ Let $\cll$ be the generator of $(T_t)_{t\geq0}.$ Then there exist a CCP functional $l$ on $\clq_0$ such that $\wtil{l}=\cll.$
\ethm
\begin{proof}
The generator $\till$ is CCP in the sense that $\partial\cll(x,y)=\cll(x^*y)-\cll(x^*)y-x^*\cll{(y)}$ is a CP kernel (see \cite{dgkbs}). The left covariance condition implies that for each $t\geq0,$ $T_t$ as well as $\cll$ keep each of the spaces $\clh_\pi$ invariant. Consequently $\cll(\clq_0)\subseteq\clq_0,$ so that it makes sense to define $l=\epsilon\circ\cll.$ Moreover 
for $x,y\in\clq_0,$ $\epsilon\circ\partial\cll(x,y)=l\left((x-\epsilon(x))(y-\epsilon(y))\right),$ so that $l$ is CCP in schurmann's sense. Hence our claim is proved.
\end{proof}
We shall prove the converse of Theorem \ref{theorem_semi} for the Gaussian generators. For this, we need a few preparatory lemmae.
\begin{lmma}\label{kappa_elo}
In Sweedler's notation, $h(a_{(1)}b)a_{(2)}=h(ab_{(1)})\kappa(b_{(2)}).$ 
\end{lmma}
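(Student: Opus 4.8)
The plan is to prove this as a purely algebraic identity on the Hopf $*$-algebra $\clq_0$, using only the counit and antipode axioms of the Hopf structure together with the invariance property $(h\ot\mathrm{id})\Delta(y)=h(y)1$ of the Haar state recorded in Section~\ref{CQG}; no analysis is involved. Throughout I work with Sweedler's notation $\Delta(a)=a_{(1)}\ot a_{(2)}$ and its iterates.

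First I would establish, in $\clq_0\ot\clq_0$, the auxiliary identity
\[
a_{(1)}b\ot a_{(2)}\;=\;(ab_{(1)})_{(1)}\ot(ab_{(1)})_{(2)}\,\kappa(b_{(2)}).
\]
To see this I expand the right-hand side: since $\Delta$ is multiplicative, and by coassociativity, it equals $a_{(1)}b_{(1)}\ot a_{(2)}b_{(2)}\kappa(b_{(3)})$. Applying the antipode axiom $y_{(1)}\kappa(y_{(2)})=\epsilon(y)1$ to the last two $b$-factors and then the counit identity $b_{(1)}\epsilon(b_{(2)})=b$ collapses this to $a_{(1)}b\ot a_{(2)}$. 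The point of the rewriting is that it has traded the ``bare'' right multiplication by $b$ sitting in the first tensor leg for a genuine coproduct $\Delta(ab_{(1)})$, which is what makes the Haar invariance relation applicable.

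Then I would apply $h\ot\mathrm{id}$ to both sides of the auxiliary identity. The left-hand side becomes exactly $h(a_{(1)}b)\,a_{(2)}$, the object we want. On the right-hand side, $\kappa(b_{(2)})$ occupies the second tensor leg, on which $h$ does not act, so it may be pulled out; using $(h\ot\mathrm{id})\Delta(y)=h(y)1$ with $y=ab_{(1)}$ turns $h\big((ab_{(1)})_{(1)}\big)\,(ab_{(1)})_{(2)}$ into $h(ab_{(1)})\,1$, and the right-hand side becomes $h(ab_{(1)})\,\kappa(b_{(2)})$. Comparing the two sides proves the lemma. The computation is mechanical once the auxiliary identity is in hand; the only genuinely non-routine move is recognizing that inserting $\kappa(b_{(3)})$ — equivalently, rewriting $b=b_{(1)}\epsilon(b_{(2)})$ and undoing the counit through the antipode — is precisely what puts the expression into the range of Haar invariance. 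As a sanity check: in the commutative case $\clq=C(G)$ with $h=\int_G\,dg$ and $\kappa(f)(g)=f(g^{-1})$, both sides of the lemma evaluate to $\int_G a(gk)b(g)\,dg$.
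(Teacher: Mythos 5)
Your proof is correct and is essentially the paper's own argument: the same ingredients (multiplicativity of $\Delta$, coassociativity, the antipode and counit axioms, and the Haar invariance $(h\ot\mathrm{id})\Delta(y)=h(y)1_{\clq}$) are used in the same way, the paper simply carrying out the computation directly under $h\ot\mathrm{id}$ starting from $h(ab_{(1)})\kappa(b_{(2)})$, while you first record the tensor-level identity $a_{(1)}b\ot a_{(2)}=\Delta(ab_{(1)})\bigl(1\ot\kappa(b_{(2)})\bigr)$ and then apply the Haar state. The difference is purely organizational.
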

\begin{proof}
\begin{equation}
\begin{split}
 h(ab_{(1)})\kappa(b_{(2)})&=\left((h\ot1)\circ\Delta\right)(ab_{(1)})\kappa(b_{(2)})\\
&=(h\ot id)\left(\Delta(ab_{(1)})(id\ot\kappa)(b_{(2)})\right)\\
&=(h\ot id)\{\Delta(a)\Delta(b_{(1)})(id\ot\kappa(b_{(2)}))\}\\
&=(h\ot id)\left[\Delta(a)\{(id\ot m_{\clq})(\Delta\ot id)(id\ot\kappa)\Delta(b)\}\right]\\
&=(h\ot id)\left[\Delta(a)\{(id\ot m_{\clq})(id\ot id\ot\kappa)(\Delta\ot id)\Delta(b)\}\right]\\
&=(h\ot id)\left[\Delta(a)\{(id\ot m_{\clq})(id\ot id\ot\kappa)(id\ot\Delta)\Delta(b)\}\right]\\
&=(h\ot id)\left[\Delta(a)\{(id\ot m_{\clq}\circ(id\ot\kappa)\Delta)\Delta(b)\}\right]\\
&=(h\ot id)\left[\Delta(a)\{(id\ot\epsilon)\Delta(b)\}\right]=(h\ot id)\left[\Delta(a)(b\ot1)\right]\\
&=h(a_{(1)}b)a_{(2)}.
\end{split}
\end{equation}
\end{proof}
\begin{crlre}\label{adjoint_elo}
For any functional $\clp:\clq_0\rightarrow\IC,$ $h\left(\widetilde{\clp}(a)b\right)=h\left(a(\widetilde{\clp\circ\kappa})(b)\right).$
\end{crlre}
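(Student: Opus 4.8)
The plan is to unwind the definitions of $\widetilde{\clp}$ and $\widetilde{\clp\circ\kappa}$ and reduce everything to the Sweedler identity just proved in Lemma \ref{kappa_elo}. Recall $\widetilde{\clp} = (id\ot\clp)\circ\Delta$, so in Sweedler's notation $\widetilde{\clp}(a) = a_{(1)}\,\clp(a_{(2)})$, and since $\clp$ is scalar-valued this is simply $\clp(a_{(2)})\,a_{(1)}$. Likewise $\widetilde{\clp\circ\kappa}(b) = b_{(1)}\,\clp(\kappa(b_{(2)}))$.

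The key step is the computation
\begin{equation*}
h\bigl(\widetilde{\clp}(a)b\bigr) = h\bigl(a_{(1)}b\bigr)\,\clp(a_{(2)}) = \clp(a_{(2)})\,h(a_{(1)}b).
\end{equation*}
Here I would like to pull $\clp$ out as a scalar and apply Lemma \ref{kappa_elo}, $h(a_{(1)}b)\,a_{(2)} = h(ab_{(1)})\,\kappa(b_{(2)})$. Applying the scalar functional $\clp$ to both sides of that identity (in the leg occupied by $a_{(2)}$ on the left, $\kappa(b_{(2)})$ on the right), which is legitimate since both sides are elements of $\clq_0$, yields
\begin{equation*}
\clp(a_{(2)})\,h(a_{(1)}b) = h(ab_{(1)})\,\clp(\kappa(b_{(2)})) = h\bigl(a\,b_{(1)}\,\clp(\kappa(b_{(2)}))\bigr) = h\bigl(a\,\widetilde{\clp\circ\kappa}(b)\bigr),
\end{equation*}
where in the last equality I used that $\widetilde{\clp\circ\kappa}(b) = b_{(1)}\,(\clp\circ\kappa)(b_{(2)})$. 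Combining the two displays gives the claim.

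The only genuinely delicate point is the justification that one may apply the functional $\clp$ to ``one leg'' of the identity in Lemma \ref{kappa_elo}; this is really the statement that Lemma \ref{kappa_elo} is an identity between well-defined elements of $\clq_0$ with the $a_{(2)}$ (resp.\ $\kappa(b_{(2)})$) appearing linearly, so that $id\ot\clp$ applied appropriately — more precisely, writing $\Delta(a) = \sum_i a_{(1)}^{i}\ot a_{(2)}^{i}$ as an honest finite sum — sends one side to the other. Since $\clq_0$ is a Hopf $\ast$-algebra with everything algebraic, there are no convergence issues and this is routine; I would spell out the finite-sum version in one line to make the leg-wise application unambiguous. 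No other obstacle is expected, and the corollary follows immediately.
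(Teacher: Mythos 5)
Your proof is correct and follows essentially the same route as the paper: both reduce the claim to Lemma \ref{kappa_elo} by applying the scalar functional $\clp$ to the free leg ($a_{(2)}$ on one side, $\kappa(b_{(2)})$ on the other), the paper merely phrasing this as the interchange of $(h\ot id)$ and $(id\ot\clp)$ rather than in explicit Sweedler finite sums. Your remark on justifying the leg-wise application of $\clp$ is exactly the content of that interchange, so nothing further is needed.
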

\begin{proof}
\begin{equation}
\begin{split}
h(\widetilde{\clp}(a)b)&=(h\ot id)\left[(id\ot\clp)\Delta(a)(b\ot1)\right]\\
&=(id\ot\clp)\left[(h\ot id)(\Delta(a)(b\ot1))\right]\\
&=(id\ot\clp)\left[h(ab_{(1)})\kappa(b_{(2)})\right]\\
&=h(ab_{(1)})\clp(\kappa(b_{(2)}))=h(ab_{(1)}\clp(\kappa(b_{(2)})))\\
&=h(a(id\ot\clp)(id\ot\kappa)\Delta(b))=h(a(\widetilde{\clp\circ\kappa})(b)).
\end{split}
\end{equation}
\end{proof}

\begin{lmma}\label{derivation_0}
Let $\eta:\clq_0\rightarrow\IC$ be an $\epsilon$-derivation. Put $\delta:=(id\ot\eta)\circ\Delta.$ Then $h(\delta(a))=0$ for all $a\in\clq_0.$
\end{lmma}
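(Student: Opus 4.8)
The plan is to compute $h(\delta(a))$ directly by applying the Haar state to $\delta(a) = (id \otimes \eta)\circ\Delta(a)$ and exploiting the invariance property of $h$. Writing $\Delta(a) = a_{(1)} \otimes a_{(2)}$ in Sweedler notation, we have $\delta(a) = a_{(1)}\,\eta(a_{(2)})$, so $h(\delta(a)) = h(a_{(1)})\,\eta(a_{(2)})$. The first step is to recognize that this expression equals $(id \otimes \eta)\bigl((h \otimes id)\Delta(a)\bigr)$, and by the left-invariance of the Haar state (Section \ref{CQG}), $(h \otimes id)\Delta(a) = h(a)\mathbf{1}_{\clq}$. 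Therefore $h(\delta(a)) = h(a)\,\eta(\mathbf{1}_{\clq})$.

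The second step is to observe that an $\epsilon$-derivation annihilates the identity: from $\eta(\mathbf 1 \cdot \mathbf 1) = \eta(\mathbf 1)\epsilon(\mathbf 1) + \epsilon(\mathbf 1)\eta(\mathbf 1) = 2\eta(\mathbf 1)$ we get $\eta(\mathbf 1) = 0$. Combining the two steps yields $h(\delta(a)) = h(a)\cdot 0 = 0$ for every $a \in \clq_0$, which is exactly the claim.

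I do not expect any genuine obstacle here; the only point requiring a little care is the justification that one may pull $h$ inside past the $\eta$ in the leg that $\eta$ does not act on — i.e. that $h(a_{(1)})\eta(a_{(2)}) = \eta\bigl(h(a_{(1)})a_{(2)}\bigr)$. Since $\Delta(a)$ lies in the algebraic tensor product $\clq_0 \otimes_{alg} \clq_0$ (the left covariance / Peter--Weyl structure guarantees finiteness of the sum), this is just linearity of $\eta$ applied to a finite sum, so it is legitimate. One could alternatively phrase the whole argument as $h\circ\delta = h\circ(id\otimes\eta)\circ\Delta = \eta\circ(h\otimes id)\circ\Delta = \eta\circ(h(\cdot)\mathbf 1_{\clq}) = h(\cdot)\,\eta(\mathbf 1_{\clq}) = 0$, which makes the structure of the proof transparent in one line.
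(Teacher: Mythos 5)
Your proposal is correct and follows essentially the same route as the paper: both apply the invariance identity $(h\ot id)\circ\Delta(a)=h(a)1_{\clq}$ to reduce $h(\delta(a))$ to $h(a)\eta(1_{\clq})$ and then use $\eta(1_{\clq})=0$. Your explicit derivation of $\eta(1_{\clq})=0$ from the $\epsilon$-derivation property is a small bit of extra care the paper leaves implicit, but the argument is the same.
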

\begin{proof}
\begin{equation}
\begin{split}
h(\delta(a))&=(h\ot id)(id\ot\eta)\circ\Delta(a)\\
&=(id\ot\eta)(h\ot id)\circ\Delta(a)\\
&=\eta(h(a)1_{\clq})\\
&=h(a)\eta(1_{\clq})=0~\mbox{for all $a\in\clq_0,$}
\end{split}
\end{equation}
where we have used the fact that $(h\ot id)\circ\Delta(a)=(id\ot h)\circ\Delta(a)=h(a)1_{\clq}.$
\end{proof}
Let $(l,\eta,\epsilon)$ be the surjective Schurmann triple for $l,$ so that on $\clq_0,$ we have\\ 
$l(a^*b)-\epsilon(a^*)l(b)-l(a^*)\epsilon(b)=\innerl\eta(a),\eta(b)\innerr.$ We recall that $\eta:\clq_0\rightarrow k_0,$ for some Hilbert space $k_0$ so that $\eta(a)=\sum_i\eta_i(a)e_i,$ $(e_i)_i$ being an orthonormal basis for $k_0$ and $\eta_i:\clq_0\rightarrow\IC$ being an $\epsilon$-derivation for each $i.$ Define 
$\theta^i_0:=(id\ot\eta_i)\circ\Delta$ for each $i.$ Observe that\\ $\|\sum_i\theta^i_0(x)^*\theta^i_0(x)\|\leq\|x_{(1)}^*x_{(1)}\|~|\sum_i\overline{\eta_i(x_{(2)})}\eta_i(x_{(2)})|<\|x_{(1)}\|^2\|\eta(x_{(2)})\|^2<\infty,$ so that $\delta:=\sum_i\theta^i_0\ot e_i=(id\ot\eta)\circ\Delta$ is a derivation from $\clq_0$ to $\clq\ot k_0.$ Now $\cll$ is a densely defined operator with $D(\cll)=\clq_0\subseteq L^2(h).$ By Corollary \ref{adjoint_elo}, $h(\cll(a^*)b)=h(a^*\widetilde{l\circ\kappa}(b))$ i.e. $\innerl\cll(a),b\innerr_{L^2(h)}=\innerl a,\widetilde{l\circ\kappa}(b)\innerr_{L^2(h)}.$ Thus $\cll$ has an adjoint which is also densely defined. Thus $\cll$ is $L^2(h)$-closable, and we denote its closure by the same notation 
$\cll.$ Note that a linear map $S:\clq_0\rightarrow\clq_0$ is left covariant i.e. 
$(id\ot S)\Delta=\Delta\circ S$ if and only if $S(\clh_\pi)\subseteq\clh_\pi$ for all $\pi.$ In such a case, we will denote by $S_\pi$ the map $S|_{_{\clh_\pi}}.$

\begin{lmma}\label{christinsen}
Let $l:\clq_0\rightarrow\IC$ be a CCP functional and $(l,\eta,\epsilon)$ be the surjective Schurmann triple associated with it. Then $\cll=\widetilde{l}$ on $\clq_0$ has Christinsen-Evans form i.e. 
$$\cll(x)=R^*(x\ot1_{k_0})R-\frac{1}{2}R^*Rx-x\frac{1}{2}R^*R+i[T,x],$$ for densely defined closable operators $R$ and $T,$ $T^*=T.$
\end{lmma}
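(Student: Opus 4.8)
The plan is to construct $R$ and $T$ by hand inside the GNS space $L^2(h)$ following the Christensen--Evans recipe, and to use the Peter--Weyl decomposition $\clq_0=\oplus_\pi\clh_\pi$ to keep every unbounded-operator manipulation finite-dimensional. Throughout I write $\cll=\widetilde l$, denote by $\xi_h=1\in L^2(h)$ the cyclic vector, and identify $a\in\clq_0$ both with the vector $a\xi_h$ and with the left-multiplication operator $\lambda(a)$.

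\textbf{Step 1 (structure identities).} Since each coordinate $\eta_i$ of $\eta$ is an $\epsilon$-derivation, each $\widetilde{\eta_i}=(\mathrm{id}\ot\eta_i)\circ\Delta$ is an ordinary derivation $\clq_0\to\clq$, so $\delta:=\widetilde\eta=\sum_i\widetilde{\eta_i}\ot e_i$ is a derivation $\clq_0\to\clq\ot k_0$ for the trivial coefficient representation $\pi(x)=x\ot1_{k_0}$, and each $\delta(x)$ is a \emph{finite} sum of elementary tensors. Carrying out the Sweedler-notation computation indicated just before the statement, from the Schurmann relation $l(a^*b)-\epsilon(a^*)l(b)-l(a^*)\epsilon(b)=\innerl\eta(a),\eta(b)\innerr$ one gets the structure identity $\partial\cll(x,y)=\cll(x^*y)-x^*\cll(y)-\cll(x^*)y=\delta(x)^*\delta(y)$ on $\clq_0$. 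From $\eta(1)=0$ we obtain $l(1)=0$, hence $\cll(1)=0$; since $l$ is hermitian, $\cll(x^*)=\cll(x)^*$; and $(h\ot\mathrm{id})\circ\Delta=h(\cdot)1$ forces $h\circ\cll=0$. By left covariance $\cll$ and $\delta$ preserve each finite-dimensional block $\clh_\pi$.

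\textbf{Step 2 (the operator $R$).} Define $R\colon\clq_0\to L^2(h)\ot k_0$ by $R(x):=\delta(x)$ (legitimate, as $\delta(x)$ is a genuine vector in $\clq_0\ot^{alg}k_0$), so $R(1)=0$ and $R\lambda(x)-(\lambda(x)\ot1_{k_0})R=\widehat\delta(x)$ on $\clq_0$, where $\widehat\delta(x)$ is left multiplication by $\delta(x)$. For closability: for $z_k\in\clq_0$, $v_k\in k_0$ the functional $\mu_k:=\innerl v_k,\eta(\cdot)\innerr$ is again an $\epsilon$-derivation, and a short computation based on Corollary \ref{adjoint_elo} rewrites $\innerl R(v),\sum_k z_k\ot v_k\innerr$ as $\innerl v,w\innerr$ for an explicit $w\in\clq_0$ built from maps $\widetilde{(\cdot)}(z_k)$; since $\clq_0\ot^{alg}k_0$ is dense in $L^2(h)\ot k_0$ this shows $R$ is closable, that $R^*$ maps $\clq_0\ot^{alg}k_0$ into $\clq_0$, and that $R$, $R^*$, $R^*R$ and $R^*(\lambda(x)\ot1_{k_0})R$ are all defined on $\clq_0$ and respect the decomposition $\clq_0=\oplus_\pi\clh_\pi$.

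\textbf{Step 3 (splitting off the derivation part) and Step 4 ($T$ self-adjoint).} Put $\beta(x):=R^*(\lambda(x)\ot1_{k_0})R-\tfrac12\{R^*R,\lambda(x)\}$ and $\gamma(x):=\lambda(\cll(x))-\beta(x)$ on $\clq_0$. The elementary Christensen--Evans identity $\partial\beta(x,y)=\widehat\delta(x)^*\widehat\delta(y)$ (direct computation, using $\widehat\delta(x)=R\lambda(x)-(\lambda(x)\ot1_{k_0})R$), together with Step 1 and $\widehat\delta(x)^*\widehat\delta(y)=\lambda(\delta(x)^*\delta(y))$, gives $\partial\gamma=0$, i.e.\ $\gamma$ is an operator-valued derivation. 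Because $R(1)=0$, one has $\gamma(x)\xi_h=\cll(x)\xi_h+\tfrac12 R^*R(x)$, so setting $T(x):=-i\bigl(\cll(x)+\tfrac12R^*R(x)\bigr)$ on $\clq_0$ gives $T(1)=0$ and $\bigl(i[T,\lambda(x)]\bigr)\xi_h=iT(x)=\gamma(x)\xi_h$; hence $\gamma-i[T,\lambda(\cdot)]$ is a derivation killing $\xi_h$, and any such derivation $D$ vanishes since $D(x)(y\xi_h)=D(xy)\xi_h-\lambda(x)D(y)\xi_h=0$. This yields $\cll(x)=R^*(x\ot1_{k_0})R-\tfrac12R^*Rx-\tfrac12xR^*R+i[T,x]$ on $\clq_0$. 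Finally, from $h\circ\cll=0$ and the structure identity, $\innerl x,(\cll+\cll^*+R^*R)y\innerr=h(\cll(x^*y))=0$ on $\clq_0$, so $\cll+\tfrac12R^*R$ is skew-symmetric and $T$ is symmetric; since $T$, $\cll$ and $R^*R$ all leave each finite-dimensional $\clh_\pi$ invariant, $T$ is the algebraic orthogonal direct sum of symmetric operators on finite-dimensional spaces, hence essentially self-adjoint, and its closure is the required $T$ while $R$ is the required densely defined closable operator.

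I expect the genuine work to lie entirely in this last bookkeeping: making the composites $R^*(\lambda(x)\ot1_{k_0})R$, $R^*R$ into honest densely defined operators and upgrading ``$T$ symmetric'' to ``$T$ self-adjoint''. The single device that disposes of it is the Peter--Weyl block decomposition $\clq_0=\oplus_\pi\clh_\pi$, under which every operator in sight is block-diagonal, so every domain, adjoint and self-adjointness question reduces to finite-dimensional linear algebra. Alternatively, once the structure identity and the covariance of $\cll$ from Step 1 are in hand, one may simply invoke the Christensen--Evans structure theorem for generators of covariant quantum dynamical semigroups from \cite{dgkbs}.
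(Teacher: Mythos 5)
Your argument is correct, and it ends up producing exactly the same objects as the paper, but by a genuinely different mechanism. The paper also takes $R=\delta=(\mathrm{id}\ot\eta)\circ\Delta$ on $\clq_0\subseteq L^2(h)$, but it \emph{guesses} $T$ in closed form using the antipode, $T=\frac{1}{2i}\bigl(\cll-\widetilde{l\circ\kappa}\bigr)$, and verifies the Christensen--Evans identity by a direct quadratic-form computation against the Haar state: it writes the three forms $\innerl\Phi(x)y,y^\prime\innerr$, $\innerl\cll(x)y,y^\prime\innerr$ and $\tfrac12\innerl[\cll-\widetilde{l\circ\kappa},x]y,y^\prime\innerr$ out in terms of $h$ and checks that the first minus the second plus the third vanishes, the symmetry of each block $T_\pi$ then being immediate from Corollary \ref{adjoint_elo}. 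You instead run the cohomological Christensen--Evans argument: $\partial\beta=\partial(\lambda\circ\cll)$ forces $\gamma=\lambda\circ\cll-\beta$ to be a derivation, a derivation vanishing on the cyclic vector vanishes, and $T$ is read off from $\gamma(\cdot)\xi_h$, with its symmetry deduced from $h\circ\cll=0$ together with the structure identity rather than from the antipode; in fact your $T=-i\bigl(\cll+\tfrac12R^*R\bigr)$ coincides on $\clq_0$ with the paper's $T$, since $R^*R=-\bigl(\widetilde l+\widetilde{l\circ\kappa}\bigr)$ there (this is, up to a sign slip, the relation the paper records at the start of Theorem \ref{heat-semigroup}). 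What the paper's route buys is the closed-form expression for $T$, which is used again immediately afterwards to identify $iT-\tfrac12R^*R$ with a multiple of $\widetilde{l\circ\kappa}$; what your route buys is that the antipode enters only through the closability of $R$ (via Corollary \ref{adjoint_elo}) and the main identity is obtained conceptually rather than by computation. Both proofs finish identically, using left covariance and the Peter--Weyl blocks $\clq_0=\oplus_\pi\clh_\pi$ to upgrade the symmetric, block-diagonal $T$ to a (essentially) self-adjoint operator. One caveat on your closing aside: the Christensen--Evans structure theorem as available in \cite{dgkbs} concerns bounded (norm or ultraweakly continuous) generators, so it cannot simply be invoked for the unbounded $\cll=\widetilde l$; your constructive argument, like the paper's, is what actually handles the unbounded case.
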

\begin{proof}
Let $R:=\delta:\clq_0\left(\subseteq L^2(h)\right)\rightarrow L^2(h)\ot k_0,$ where $\delta:=(id\ot\eta)\circ\Delta.$\\ For $x\in\clq_0,$ consider the quadratic forms  
\begin{equation}\label{1}
\innerl\Phi(x)y,y^\prime\innerr_{L^2(h)}=h\left(\cll(y^*x^*y^\prime)-\cll(y^*x^*)y^\prime-y^*\cll(x^*y^\prime)+y^*\cll(x^*)y^\prime\right);
\end{equation}
 \begin{equation}\label{2}
\innerl\cll(x)y,y^\prime\innerr_{L^2(h)}=h(y^*\cll(x^*)y^\prime)
\end{equation}
and 
\begin{equation}\label{3}
\frac{1}{2}\innerl[\cll-\widetilde{l\circ\kappa},x]y,y^\prime\innerr_{L^2(h)}=\frac{1}{2}h(\cll(y^*x^*)y^\prime-y^*x^*\cll(y^\prime)-\cll(y^*)x^*y^\prime+y^*\cll(x^*y^\prime)),
\end{equation}
where $\Phi(x)=R^*(x\ot1_{k_0})R-\frac{1}{2}R^*Rx-x\frac{1}{2}R^*R.$ 
Observe that by subtracting (\ref{2}) from (\ref{1}) and adding (\ref{3}) to it, we get zero. So by taking\\ 
$T=\frac{1}{2i}(\cll-\widetilde{l\circ\kappa})$ on $\clq_0,$ we get $\innerl\cll(x)y,y^\prime\inner=\innerl\left(\Phi(x)+i[T,x]\right)y,y^\prime\inner$ for $x\in\clq_0.$ Note that $T$ is covariant, hence we have $T=\oplus_{\pi}T_\pi$ and since each $\clh_\pi$ is finite dimensional and $T_\pi^*=T_\pi$ by corollary \ref{adjoint_elo}, we have that $T$ has a self-adjoint extension on $L^2(h)$ which is the $L^2$-closure of $T$ in $\clq_0.$
\end{proof}
For a set of vectors $\{h_1,h_2,....\}$ in any vector space, we will denote by $\innerl h_i|i=1,2,...\innerr_\IC$ algebraic linear span over $\IC.$ We are now in a position to prove the converse of Theorem \ref{heat-semigroup} for Gaussian generators, which gives a left covariant QDS on $\clq$ and Gaussian generator.

\bthm\label{heat-semigroup}
Given a Gaussian CCP functional $l$ on $\clq_0,$ there is a unique covariant QDS on $\clq$ such that its generator is an extension of $\widetilde{l}.$
\ethm
\begin{proof}
Note that in notation of Lemma \ref{christinsen}, we have $R^*R=\widetilde{l}+\widetilde{l\circ\kappa},$ $T=\frac{1}{2i}(\widetilde{l}-\widetilde{l\circ\kappa})$ and hence $G:=iT-\frac{1}{2}R^*R=-\wtil{l\circ\kappa}.$ Hence $(id\ot G)\circ\Delta=\Delta\circ G.$ So each $G_\pi$ generates a semigroup in $\clh_\pi$ say $T_t^\pi$ which is contractive, since the generator is of the form $iT_\pi-\frac{1}{2}(R^*R)_\pi,$ with $T_\pi^*=T_\pi.$ Take $S_t:=\oplus_\pi T_t^\pi,$ which is a $C_0,$ contractive semigroup in $L^2(h).$ There exists a minimal semigroup $(T_t)_{t\geq0}$ on $B(L^2(h)),$ such that its generator, say $\cll^{min},$ is of the form given in Lemma \ref{christinsen} when restricted to a suitable dense domain (see \cite{dgkbs}). Now following the arguments used in proving Theorem \ref{THE THEOREM}, we can conclude that $\cll^{min}=\widetilde{l}$ on $\clq_0.$ Thus $\cll^{min}(\clh_\pi)\subseteq\clh_\pi.$ Furthermore, each $\clh_\pi$ being finite dimensional, $T_t(x)=e^{t\cll_\pi^{min}}(x)=\sum_n\frac{t^n}{n!}(\cll^{min}_\pi)^n(x),$ which converges in the norm for $x\in\clh_\pi.$ Thus in particular we see that $T_t(\clh_\pi)\subseteq\clh_\pi$ for all $\pi$ and all $t\geq0$ i.e. $(id\ot T_t)\circ\Delta=\Delta\circ T_t.$ 
\end{proof}

\bthm\label{EH elo}
The QDS generated by a Gaussian generator $l$ as in Theorem \ref{heat-semigroup}, always admits E-H dilation which is implemented by unitary cocycles.
\ethm
\begin{proof} We will apply Theorem \ref{THE THEOREM} with $H=T.$ Let $\clv_0=\clq_0$ and $\clw_0=\langle e_i|i=1,2,3...\rangle_\IC,$ where $(e_i)_i$ is an orthonormal basis for $k_0.$ Observe that by Lemma \ref{derivation_0},  $R^*=-\sum_i\theta^0_i\ot\langle e_i|.$ Thus $u\ot\xi\in D(R^*)$ for all $u\in\clv_0$ and $\xi\in\clw_0.$ The proof of Theorem \ref{heat-semigroup} implies that $G:=iT-\frac{1}{2}R^*R$ generates a $C_0$ contractive semigroup in $L^2(h).$ Noting that $G^*$ is an extension of $-\widetilde{l},$ using arguments as in Theorem \ref{heat-semigroup}, we can prove that $G^*$ generates a $C_0$ contractive semigroup in $L^2(h).$ Thus all the conditions of Theorem \ref{THE THEOREM} hold, and we get unitary cocycles $(U_t)_{t\geq0}$ satisfying an H-P equation. Then $j_t:B(L^2(h))\rightarrow B(L^2(h))\ot B(\Gamma)$ defined by $j_t(x):=U_t(x\ot1_\Gamma)U_t^*,$ is a $\ast$-homomorphic EH flow satisfying the stochastic differential equation:
\begin{equation}\label{EH}
\begin{split}
dj_t=&j_t\circ\left(a_{\delta^\dagger}(dt)-a^\dagger_\delta(dt)+\cll dt\right)\\
&j_0=id, 
\end{split}
\end{equation}
on $\clq_0,$ where $\delta(x)=(id\ot\eta)\circ\Delta(x)=[R,x]$ for $x\in\clq_0.$ We need to show that $j_t(\clq_0)\subseteq\clq^{\prime\prime}\ot B(\Gamma)$ i.e. $\innerl e(f),j_t(x)e(g)\innerr\in\clq^{\prime\prime}$ for $f,g\in\Gamma$ and $x\in\clq_0.$

Let $l_t$ be the algebraic Levy process associated with $l,$ satisfying equation (\ref{schurmann's equation}) with $\rho=\epsilon.$ 
For $x\in\clq_0$ and $\xi,\xi^\prime$ belonging to $k_0,$ let 
$T^{\xi,\xi^\prime}_t$ and 
$\phi^{\xi,\xi^\prime}_t$ denote the maps
$\innerl e(\chi_{[0,t]}\xi),j_t(\cdot)_{e(\chi_{[0,t]}\xi^\prime)}\innerr$ and 
$\innerl e(\chi_{[0,t]}\xi),l_t(\cdot){e(\chi_{[0,t]}\xi^\prime)}\innerr$
repectively. We claim that for all $x\in\clq_0,$ $T^{\xi,\xi^\prime}_t(x)=\wtil{\phi^{\xi,\xi^\prime}_t}(x)$ which will be shown towards the end of the proof. Let $\cld$ denote the linear span of elements of the form $e(f)$ where $f$ is a step function taking values in $(e_i)_i.$ By the theorems in \cite{skeide,krpsunder}, $\cld$ is dense in $\Gamma.$ Consider the step functions $f=\sum_i^k a_i\chi_{[t_{i-1},t_i]}$ and $g=\sum_i^kb_i\chi_{[t_{i-1},t_i]},$ where $t_0=0,t_k=t,$ and $a_i,b_i$ belong to $\{e_i:i\in\IN\}.$ Then note that for $x\in\clq_0,$
\begin{equation}
\begin{split}
\innerl e(f),j_t(x)_{e(g)}\innerr&=T_{t_1-t_0}^{a_1,b_1}\circ T_{t_2-t_1}^{a_2,b_2}\circ......T^{a_k,b_k}_{t-t_{k-1}}(x)\\
&=\wtil{\phi_{t_1-t_0}^{a_1,b_1}}\circ\wtil{\phi_{t_2-t_1}^{a_2,b_2}}\circ......\wtil{\phi^{a_k,b_k}_{t-t_{k-1}}}(x)\\
&=\wtil{A}(x)\\
&=\innerl e(f),\wtil{l}_t(x)e(g)\innerr~\in\clq_0,
\end{split}
\end{equation}

where $A(x)=(\phi_{t_1-t_0}^{a_1,b_1}\ast \phi_{t_2-t_1}^{a_2,b_2}\ast......\phi^{a_k,b_k}_{t-t_{k-1}})(x).$  Since $\cld$ is total in $\Gamma,$ this implies that the map
$\innerl e(f),j_t(x)e(g)\innerr\in\clq^{\prime\prime}$ for all $f,g\in\Gamma~x\in\clq_0.$ 

The proof of the theorem will be complete once we show that for $x\in\clq_0,$ $\xi,\nu\in k_0,$ we have  $T^{\xi,\nu}_t(x)=\wtil{\phi^{\xi,\nu}_t}(x).$ This can be achieved as follows:

Fix an $x\in\clq_0.$ From the cocycle property, it follows that $T^{\xi\nu}_t$ is a $C_0$-semigroup on $\clq$ and $\phi_t^{\xi,\nu}$ is a convolution semigroup of states on $\clq_0.$ Since $l_t$ and $j_t$ satisfy equations (\ref{schurmann's equation}) and (\ref{EH}) respectively, it follows that the generator of the convolution semigroup $(\phi^{\xi,\nu}_t)_{t\geq0}$ is $L=l+\innerl\xi,\eta\innerr+\eta^\dagger_\nu$ and the generator of the semigroup $(T_t^{\xi\nu})_t$ is $\widetilde{L}.$ By the fundamental Theorem of coalgebra (see \cite{franz1}), there is a finite dimensional coalgebra say $C_x$ containing $x.$ It follows that $\wtil{L}(C_x)\subseteq C_x.$ Note that $C_x$ being finite dimensional, the map 
$\wtil{L}:C_x\rightarrow C_x$ is bounded with $\|\widetilde{L}\|=M_x$(say), where $M_x$ depends on $x.$ Now 
\begin{equation}
\begin{split}
T^{\xi,\nu}_t(x)&=x+\int_0^tT_s^{\xi,\nu}(\wtil{L}(x))ds\\
&=x+t\wtil{L}(x)+\int_{s_1=0}^t\int_{s_2=0}^{s_1}T_{s_2}^{\xi,\nu}(\wtil{L}^2(x))ds\\
&=x+t\wtil{L}(x)+\frac{t^2}{2!}\wtil{L}^2(x)+\frac{t^3}{3!}\wtil{L}^3(x)+.....
+\int_{s_1=0}^t\int_{s_2=0}^{s_1}\int_{s_3=0}^{s_2}....\int_{s_n=0}^{s_{n-1}}T^{\xi,\nu}_{s_n}(\wtil{L}^n(x))ds;
\end{split}
\end{equation}
Now $\|\int_{s_n=0}^{s_{n-1}}T^{\xi,\nu}_{s_n}(\wtil{L}^n(x))ds\|\leq e^{t\innerl\xi,\nu\innerr}\frac{t^n(M_x)^n\|x\|}{n!}\rightarrow 0$ as $n\rightarrow\infty.$
Thus 
\begin{equation}
\begin{split}
T_t^{\xi,\nu}(x)&=x+t\wtil{L}(x)+\frac{t^2}{2!}\wtil{L}^2(x)+......\\
&=\wtil{\epsilon}(x)+t\wtil{L}(x)+\frac{t^2}{2!}\wtil{(L\ast L)}(x)+\frac{t^3}{3!}\wtil{(L\ast L\ast L)}(x)+....\\
&=\wtil{\phi^{\xi,\nu}_t}(x),~where\\
&=\phi_t^{\xi,\nu}(x)=\left(\epsilon+tL+\frac{t^2}{2!}(L\ast L)+\frac{t^3}{3!}(L\ast L\ast L)+....\right)(x).
\end{split}
\end{equation}
\end{proof}

We will call $j_t$ a Quantum Gaussian process on $\clq.$ If $l$ generates the algebraic QBM (as defined after Proposition \ref{franz}), then we will call $j_t$ the Quantum Brownian motion (QBM for short) on $\clq.$
\begin{rmrk}
If $l=l\circ\kappa,$ we will call the above QBM symmetric. This is because under the given condition, $(T_t)_{t\geq0}$ generated by $\cll$ becomes a symmetric QDS i.e. $h(T_t(x)y)=h(xT_t(y)).$
\end{rmrk}
The following result, which is probably well-known, demonstrates the equivalence of the quantum and classical definitions of Gaussian processes on compact Lie-groups. 
\bthm
Let $G$ be a compact Lie-group. Then a generator of a quantum Gaussian process (QBM) on $\clq=C(G)$ is also the generator of a classical Gaussian process (QBM) and vice-versa.
\ethm
\begin{proof}
Let $l$ be the given generator and let $\cll:=\widetilde{l},$ as before. Observe that the semigroup $(T_t)_{t\geq0}$ associated with the map $\cll$ is covariant with respect to left action of the group. Moreover, $(T_t)_{t\geq0}$ is a Feller semigroup. Thus by Theorem $2.1$ in page $42$ of \cite{mingliao}, we see that $C^\infty(G)\subseteq D(\cll).$ Now on $C(G),$ there is a canonical locally convex topology generated by the seminorms $\|f\|_n:=\sum_{i_1,i_2,...i_k:k\leq n}\|\partial_{i_1}\partial_{i_2}...\partial_{i_k}(f)\|,$ where $\partial_{i_l}$ is the generator of the one-parameter group $L_{exp(tX_{i_l})},$ such that $C^\infty(G)$ is complete and $\clq_0$ is dense in $C^\infty(G)$ in this topology (see \cite{dgkbs} and references therein).  Now as $\cll$ is closable in the norm topology, it is closable in this locally convex topology and hence (by the closed graph theorem) continuous as a map from $\left(C^\infty(G),\{\|\cdot\|_n\}_n\right)\rightarrow (C(G),\|\cdot\|_\infty).$ From this, and using the fact that $\cll$ commutes with $L_g~\forall g,$ it can be shown along the lines of Lemma 8.1.9 in page 193 of \cite{dgkbs} that $\cll(f)\in C^\infty(G)$. Moreover, we can extend the identity 
$\cll(abc)=\cll(ab)c-ab\cll(c)+a\cll(bc)-\cll(a)bc+\cll(ac)b-ac\cll(b)$ for all $a,b,c\in C^\infty(G)$ by continuity. Thus $\cll$ is a local operator. Now by the main theorem in \cite{yosida}, this implies that $\cll$ is a second order elliptic differential operator, and hence generator of a classical Gaussian process. 

On the other hand, given a generator $\cll$ of a classical Gaussian process, $(id\ot\cll)\Delta=\Delta\circ\cll$ implies that in particular, $\cll(\clq_0)\subseteq\clq_0.$ Moreover, it can be verified that $\cll$ satisfies the identity $\cll(abc)=\cll(ab)c-ab\cll(c)+a\cll(bc)-\cll(a)bc+\cll(ac)b-ac\cll(b)$ for $a,b,c\in C^\infty(G)$ and hence in $\clq_0.$ Thus $\cll$ is the generator of a quantum Gaussian process as well. 
\end{proof}

\subsection{Quantum Brownian motion on quantum spaces}\label{notun section}
Suppose $G$ is a compact Lie-group, with Lie-algebra $\mathfrak{g},$ of dimension $d.$ There exists an $Ad(G)$-invariant inner product in $\mathfrak{g}$ which induces a bi-invariant Riemannian metric in $G.$ Suppose $G$ acts transitively on a manifold $M.$ Then as manifolds, $M\cong H\backslash G,$ for some closed subgroup $H\subseteq G$ and as the innerproduct on $\mathfrak{g}$ is in particular $Ad(H)$-invariant, it induces a $G$-invariant Riemannian metric on $M.$  Let $\mathfrak{g}$ and $\mathfrak{h}$ be the Lie-algebras of $G$ and $H$ respectively. It is a well-known fact (see \cite{mingliao}) that $\mathfrak{g}=\mathfrak{h}\oplus\mathfrak{p},$ where $\mathfrak{p}$ is a subspace such that $Ad(H)\mathfrak{p}\subseteq\mathfrak{p}$ and $[\mathfrak{p},\mathfrak{p}]\subseteq\mathfrak{h}.$ Let $\{\clx_i\}_{i=1}^d$ be a basis of $\mathfrak{g}$ such that $\{\clx_i\}_{i=1}^m$ is a basis for $\mathfrak{p}$ and $\{\clx_i\}_{i=m+1}^d$ is a basis for $\mathfrak{h}.$ Let $\pi:G\rightarrow H/G$ be the quotient map given by $\pi(g)=Hg,$ for $g\in G.$ It follows that if $f\in C(H/G),$ $\clx_i(f\circ\pi)\equiv0$ for all $i=m+1,...d.$  The Laplace-Beltrami operator on $M$ is thus given by \begin{equation*}
\frac{1}{2}\Delta_{H/G}f(x)=\frac{1}{2}\sum_{i=1}^{m}\clx_i^2(f\circ\pi)(g),
\end{equation*}
where $f\in C^\infty(M)$ and $x=Hg,$ or in other words, if $\{W_t^{(i)}\}_{i=1}^d$ denote the standard Brownian motion in $\IR^d,$ the standard covariant Brownian motion on $M(\cong H/G),$ starting at $m$ is given by $\clb_t^m:=m.\clb_t$ where $\clb_t:=exp(\sum_{i=1}^mW_t^{(i)}\clx_i)$ and $exp$ denotes the exponential map of the Lie group $G$.
Now suppose $M$ is a compact Riemannian manifold such that the isometry group of $M,$ say $G,$ acts transitively on $M.$ The above discussion applies to $M$ and it may be noted in particular that in this case, the Laplace-Beltrami operator on $M$ coincides with the Hodge-Laplacian on $M$ restricted to $C^\infty(M).$ It follows from Proposition $2.8$ in page $51$ of \cite{mingliao} and the discussions preceeding it that a Riemannian Brownian motion on a compact Riemannian manifold $M$ is induced by a bi-invariant Brownian motion on $G,$ the isometry group of $M,$ if $G$ acts transitively on $M.$ Furthermore by Proposition \ref{podles}, it follows that if $G$ acts transitively on $M,$ then the action is ergodic i.e. $C(M)$ is homogeneous.  Motivated by this, we amy define a Quantum Brownian motion on a quantum space as follows:

Let $(\cla^\infty,\clh,D)$ be a spectral triple satisfying the conditions stated in subsection \ref{QISO}. Let $(\clq,\Delta)$ denote the quantum isometry group as obtained in Proposition \ref{DG}, $\alpha$ being the action. Suppose that $(\clq,\Delta)$ acts ergodically on $\cla^\infty,$ {\bf i.e. the quantum space $\cla:=\overline{\cla^\infty}^{\|\cdot\|_\infty}$ is homogeneous}. Let $l:\clq_0\rightarrow\IC$ be the generator of a bi-invariant quantum gaussian process $j_t(\cdot)$ on $\clq$ i.e. $(l\ot id)\circ\Delta=(id\ot l)\circ\Delta$ on $\clq_0.$ 

Define the process $k_t:=(id\ot l_t)\circ\Delta:\cla_0\rightarrow\cla\ot B(\Gamma(L^2(\IR_+,k_0)))$ on $\cla_0.$ 
Since $\alpha$ is an ergodic action, it is known that there exists an $\alpha$-invariant state $\tau$ on $\cla$ (see \cite{}). Moreover, in the notation of Proposition \ref{spectral subspaces of action}, $\tau$ is faithful on 
$\cla_0:=\oplus_{\gamma\in Irr_{_{\clq}}}\left(\oplus_{i\in I_{\gamma}}W_{_{\gamma i}}\right)$ and as a Hilbert space, $L^2(\tau):=\oplus_{\gamma\in Irr_{_{\clq}}}\left(\oplus_{i\in I_{\gamma}}W_{_{\gamma i}}\right).$
\bthm\label{QBM on quantum spaces}
There exists a unitary cocycle $(U_t)_{t\geq0}\in Lin(L^2(\tau)\ot\Gamma)$ satisfying an HP equation, where $\Gamma:=\Gamma(L^2(\IR_+,k_0))$ such that $k_t(x)=U_t(x\ot id_{\Gamma})U_t^*$ for $x\in\cla_0.$ Thus $k_t$ extends to a bounded map from $\cla$ to $\cla^{\prime\prime}\ot B(\Gamma).$ Moreover, $k_t$ satisfies an EH equation with coefficients $(\cll_{_{\cla}},\delta,\delta^\dagger),$ where $\cll_\cla:=(id\ot l)\circ\alpha,$ $\delta:=(id\ot \eta)\circ\alpha,$ and initial condition $j_0=id.$
\ethm

\begin{proof}
Observe that $(\alpha\ot id)\circ\alpha=(id\ot\Delta)\circ\alpha.$ Hence, proceeding as in subsection \ref{analytic}, with $H_\pi$ replaced by $W_{_{\gamma}}$ for $\gamma\in Irr_{_{\clq}},$ and $L^2(h)$ replaced by $L^2(\tau),$ we get the existence of a unitary cocycle $(U_t)_{t\geq0}$ satisfying an HP equation with coefficient matrix
$\begin{pmatrix}
iT-\frac{1}{2}R^*R&R^*\\
R&0
 \end{pmatrix},$
with the initial condition $U_0=I,$
where $T,R$ are the closed extensions of $\frac{1}{2i}(\cll_{_{\cla}}-(id\ot (l\circ\kappa))\circ\alpha)$ and $(id\ot\eta)\circ\alpha$ respectively. Now, proceeding as in Theorem \ref{EH elo}, we get our result.
\end{proof}

\bdfn
A generator of covariant quantum Gaussian process (QBM) on the non-commutative manifold $\cla$ is defined as a map of the form $l_{_{\cla}}:=(id\ot l)\circ\alpha,$ where $l$ is the generator of some bi-invariant quantum Gaussian process (QBM) on $\clq.$ In such a case, the EH flow $k_t$ obtained in Theorem \ref{QBM on quantum spaces} will be called covariant quantum-Gaussian process (QBM) with the generator $\cll_\cla.$
\edfn
We will usually drop the adjective `covariant'.
Observe that 
\begin{equation}\label{covariance condition}
\begin{split}
(\cll_{_{\cla}}\ot id_\clq)\alpha=&(id_\cla\ot l\ot id_\clq)(\alpha\ot id_\clq)\alpha\\
&=(id_\cla\ot l\ot id_\clq)(id_\cla\ot\Delta)\alpha\\
&=(id_\cla\ot(l\ot id_\clq)\Delta)\alpha\\
&=(id_\cla\ot(id_\clq\ot l)\Delta)\alpha~\mbox{(since $(l\ot id)\Delta=(id\ot l)\Delta$)}\\
&=(\alpha\ot l)\alpha=\alpha\circ \cll_{_{\cla}}.
\end{split}
\end{equation}

It is not clear whether the condition (\ref{covariance condition}) is equivalent to the bi-invariance of the Gaussian generator $l$ on $\clq.$ However, let us show that it is indeed so for the class of quantum spaces which are quotient (hence in particular for the classical ones).

We recall (see subsection \ref{3-spaces}) that $\cla$ will be called a quotient of the CQG $(\clq,\Delta)$ by a quantum subgroup $H$ if $\cla$ is $C^*$-algebra isomorphic to the algebra
$\{x\in \clq~:(\pi\ot id)\Delta(x)=1\ot x\},$ where $\pi:\clq\rightarrow H$ is the CQG morphism. 

\bthm\label{3-equivalent conditions}
Let $l:\clq_0\rightarrow\IC$ be the generator of a quantum Gaussian process on $\clq.$ Suppose $(\clq,\Delta)$ acts on a quantum space $\cla$ such that $\cla$ is a quotient space. Denote the action by $\alpha$ and define $\cll_{_{\cla}}:=(id_\cla\ot l)\alpha.$ Then the following conditions are equivalent:
\begin{enumerate}
\item $(l\ot id)\Delta=(id\ot l)\Delta.$
\item $(l\ot id_\clq)\alpha=(id_\cla\ot l)\alpha.$ 
\item $(\cll_{_{\cla}}\ot id_\clq)\alpha=\alpha\circ \cll_{_{\cla}}.$
\end{enumerate}
\ethm
\begin{proof}
It can be shown (see \cite{podles}) that $\alpha=\Delta|_{_{\cla}}$ in case of quotien spaces, where $\cla$ has been identified with the algebra $\{x\in \clq~:(\pi\ot id)\Delta(x)=1\ot x\}.$ Thus $(1)\Rightarrow(2)$ is trivial. Let us prove $(2)\Rightarrow(1).$ It can be shown (see \cite[page 5]{podles}) that if $\cla$ is a quotient space, then 
the subspaces $W_{\gamma i}$ for $\gamma\in Irr_{_{\clq}},$ as described in Proposition \ref{spectral subspaces of action} are spanned by $\{u^\gamma_{ij}\}_{j=1}^{d_\gamma}$ and cardinality of the set $I_\gamma$ is $n_\gamma.$ So for a fixed $i,j,~i=1,2,...n_\gamma;~j=1,2,....d_\gamma,$ we have 
\begin{equation*}
\begin{split}
&(l\ot id_\clq)\alpha(u^\gamma_{ij})=(id_\cla\ot l)\alpha(u^\gamma_{ij})\\
&i.e.\\
&\sum_{k=1}^{d_\gamma}l(u^\gamma_{ik})u^\gamma_{kj}=\sum_{k=1}^{d_\gamma}u^\gamma_{ik}l(u^\gamma_{kj})~;
\end{split}
\end{equation*}
comparing the coefficients, we get $l(u^\gamma_{ii})=l(u^\gamma_{jj}),~l(u^\gamma_{ij})=0$ for $i\neq j,$  where $1\leq i\leq n_\gamma$ and $1\leq j\leq d_\gamma.$ As a vector space, $\clq_0=\oplus_{\gamma\in Irr_{_{\clq}}}\oplus_{i=1}^{d_\gamma}W_{\gamma i}.$ From the preceding discussions, it follows that $(l\ot id)\Delta(u^\gamma_{ij})=(id\ot l)\Delta(u^\gamma_{ij})$ which implies that $(l\ot id)\Delta=(id\ot l)\Delta$ i.e. $(2)\Rightarrow(1).$ $(1)\Rightarrow(3)$ was already observed right after defining covariant quantum Gaussian process. The proof of the theorem will be completed if we show $(3)\Rightarrow(2).$ This can be argued as follows:

Since $\cla$ is a quotient, we have $\alpha=\Delta|_{_{\cla}}.$ Consider the functional $\epsilon|_{_{\cla_0}},$ where $\cla_0:=\cla\cap \clq_0.$ Note that $\epsilon|_{_{\cla_0}}\circ \cll_{_{\cla}}=l.$ So applying $\epsilon|_{_{\cla_0}}\ot id_\clq$ on both sides of $(3),$ we get $(l\ot id_\clq)\alpha=\cll_{_{\cla}}:=(id_\cla\ot l)\alpha.$ Thus $(3)\Rightarrow(2).$ 
\end{proof}

\subsection{Deformation of Quantum Brownian motion}
Recall the set-up and notations of section \ref{CQG}, where the Rieffel deformation of $(\clq,\Delta),$ denoted by $\clq_{\theta,-\theta},$ for some skew symmetric matrix $\theta,$ of a CQG was described. As a $C^*$-algebra, it is the fixed point subalgebra 
$(\clq\ot C^*(\IT^{2n}_\theta))^{\sigma\times\tau^{-1}},$ and has the same coalgebra structure as that of $\clq.$ 
\bthm\label{lifting of QBM}
Let $l$ be the generator of a quantum Gaussian process and $\cll:=\widetilde{l}.$ Suppose that $\cll\circ\sigma_{_{\overline{z}}}=\sigma_{_{\overline{z}}}\circ\cll,$ for $\overline{z}\in\IT^{2n}.$
Then we have the following: 
\begin{enumerate}
\item[(i)] $(\cll\ot id)((\clq_0\ota\clw)^{\sigma\times\tau^{-1}})\subseteq(\clq_0\ota \clw)^{\sigma\times\tau^{-1}};$
\item[(ii)] $\cll_\theta:=(\cll\ot id)|_{_{(\clq_0\ota\clw)^{\sigma\times\tau^{-1}}}}$ is a generator of a quantum Gaussian process;
\item[(iii)] with respect to the natural identification of $(\clq_{\theta,-\theta})_{-\theta,\theta}$ with $\clq,$
we have $(\cll_\theta)_{-\theta}=\cll.$
\end{enumerate}
\ethm
\begin{proof}
Notice that the counit $\epsilon$ and the coproduct $\Delta$ remains the same in the deformed algebra, as the coalgebra $\clq_0$ is vector space isomorphic to $(\clq_0\ota \clw)^{\sigma\times\tau^{-1}}.$ By our hypothesis, $\sigma_{_{\overline{z}}}\circ\cll=\cll\circ\sigma_{_{\overline{z}}},$ which implies $(i).$ 

Since $\cll$ is a CCP map, it follows that $\cll_\theta$ is a CCP map. Moreover, since we have the identity 
$l(abc)=l(ab)\epsilon(c)-\epsilon(ab)l(c)+l(bc)\epsilon(a)-\epsilon(bc)l(a)+l(ac)\epsilon(b)-\epsilon(ac)l(b)$ for $a,b,c\in\clq_0,$ it follows that $l_\theta:=\epsilon\circ\cll_\theta$ also satisfies the same identity on the coalgebra $(\clq_0\ota \clw)^{\sigma\times\tau^{-1}}.$ Thus $l_\theta,$ or equivalently $\cll_\theta,$ generates a quantum Gaussian process on $\clq_{\theta,-\theta},$ which proves $(ii).$

(iii) follows from the natural identification of $(\clq_{\theta,-\theta})_{-\theta,\theta}$ with $\clq$ and an application of the result in (ii).
\end{proof}
We have the following obvious corollary:

\begin{crlre}
For a bi-invariant quantum Gaussian process, the conclusion of Theorem \ref{lifting of QBM} hold.
\end{crlre}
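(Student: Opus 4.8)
The plan is to observe that the statement is nothing but Theorem \ref{lifting of QBM} applied once we have checked its sole hypothesis, namely $\cll\circ\sigma_{\overline{z}}=\sigma_{\overline{z}}\circ\cll$ for all $\overline{z}\in\IT^{2n}$; so the whole proof reduces to verifying this commutation relation from bi-invariance of $l$, and then quoting the theorem.

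First I would unwind the action $\sigma$ on $\clq$ that enters the Rieffel deformation. By the construction recalled in subsection \ref{deformation defined by reiffel}, for $\overline{z}=(s,u)\in\IT^{2n}$ we have $\sigma_{(s,u)}=\chi_{(s,u)}=(\Omega(s)\ot id)\circ\Delta\circ(id\ot\Omega(-u))\circ\Delta$, where $\Omega(s)=ev_s\circ\pi$. Writing $L_s:=(\Omega(s)\ot id)\circ\Delta$ and $R_u:=(id\ot\Omega(-u))\circ\Delta$ for the corresponding left and right translation operators, this is literally $\sigma_{(s,u)}=L_s\circ R_u$, so it suffices to show that $\cll=\widetilde{l}$ commutes with every $L_s$ and every $R_u$. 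Now the key point is that bi-invariance of $l$, i.e. $(l\ot id)\circ\Delta=(id\ot l)\circ\Delta$ on $\clq_0$, makes $\cll=\widetilde{l}=(id\ot l)\circ\Delta=(l\ot id)\circ\Delta$ \emph{both} left- and right-covariant: left covariance $(id\ot\cll)\circ\Delta=\Delta\circ\cll$ is the identity $(id\ot\widetilde{\clp})\Delta=\Delta\circ\widetilde{\clp}$ already recorded in subsection \ref{CQG}, while right covariance $(\cll\ot id)\circ\Delta=\Delta\circ\cll$ follows from $\cll=(l\ot id)\circ\Delta$ by coassociativity (a one-line Sweedler check: both sides send $x$ to $l(x_{(1)})\,x_{(2)}\ot x_{(3)}$). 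A direct Sweedler computation then yields $\cll\circ L_s=L_s\circ\cll$ from left covariance — namely $\cll(L_s x)=\Omega(s)(x_{(1)})\,\cll(x_{(2)})=L_s(\cll x)$ — and $\cll\circ R_u=R_u\circ\cll$ from right covariance, first on the Hopf $*$-algebra $\clq_0$ (which $\cll$, $L_s$, $R_u$ all preserve) and then on $\clq$ by density and closability of $\cll$. Composing, $\cll\circ\sigma_{(s,u)}=(\cll\circ L_s)\circ R_u=L_s\circ(\cll\circ R_u)=\sigma_{(s,u)}\circ\cll$, which is exactly the hypothesis of Theorem \ref{lifting of QBM}.

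With that hypothesis in hand I would simply invoke Theorem \ref{lifting of QBM} to obtain conclusions (i)--(iii) for this $l$, which is the content of the corollary. The only points requiring a little care — but not genuine obstacles — are the bookkeeping of which covariance (left versus right) is used to move $\cll$ past $L_s$ versus $R_u$, and checking that all operators in sight preserve the algebraic core $\clq_0$ so that the Sweedler manipulations are legitimate before one passes to the closure; both are routine given the material already developed in subsections \ref{CQG} and \ref{analytic}.
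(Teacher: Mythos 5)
Your proposal is correct and is exactly the argument the paper leaves implicit: the corollary is stated as an obvious consequence of Theorem \ref{lifting of QBM}, the point being that left covariance of $\cll=\widetilde{l}$ is automatic while bi-invariance $(l\ot id)\Delta=(id\ot l)\Delta$ gives right covariance, so $\cll$ commutes with both factors of $\sigma_{(s,u)}=(\Omega(s)\ot id)\Delta\circ(id\ot\Omega(-u))\Delta$ and hence with the $\IT^{2n}$-action, verifying the theorem's hypothesis. Your Sweedler bookkeeping on $\clq_0$ matches what the authors intend, so nothing further is needed.
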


Thus we have a $1-1$ correspondence given by $\cll\leftrightarrow\cll_\theta,$ between the set of quantum Gaussian processes on $\clq$ and $\clq_{\theta,-\theta}.$ In case $\clq$ is co-commutative, i.e. $\Sigma\circ\Delta=\Delta,$ where $\Sigma$ is the flip operation, it is easily seen that any quantum Gaussian process on $\clq$ will be bi-invariant and so the $1-1$ correspondence $\cll\leftrightarrow\cll_\theta$ holds for arbitrary quantum Gaussian processes in such a case. It is not clear, however, whether we can get $1-1$ correspondence between bi-invariant QBM on the deformed and undeformed CQGs.

\bthm\label{abelian Lie-algebra}
If in the setup of Theorem \ref{lifting of QBM}, we have $\clq=C(G)$ for a compact Lie-group $G$ with abelian Lie-algebra $\mathfrak{g},$ then the hypothesis of Theorem \ref{lifting of QBM} and hence the conclusion hold.
\ethm

\begin{proof}
Let $G=G_e\bigsqcup_{i\in\Lambda}G_i,$ where $e\in G$ is the identity element and $G_e,G_i$ are the connected components of $G,$ $G_e$ being the identity component. Let the coproduct of the Rieffel-deformed algebra $\clq_{\theta,-\theta}$ be denoted by $\Delta_\theta$ (note that it is the same coproduct as the original one). Observe that since the action $\sigma$ is strongly continuous, $\overline{z}\cdot G_e\subseteq G_e~\forall \overline{z}\in\IT^{2n},$ or equivalently, we have $\sigma_{_{\overline{z}}}(C(G_e))\subseteq C(G_e).$ Thus one has the following decomposition:
\begin{equation*}
(C(G))_{\theta,-\theta}:=(C(G_e))_{\theta,-\theta}\oplus(\clb)_{\theta,-\theta},
\end{equation*}
where $\clb:=\oplus_{i\in\Lambda}C(G_i)$ and $C(G_e)_{\theta,-\theta}$ itself is a quantum group satisfying $\Delta_\theta\left(C(G_e)_{\theta,-\theta}\right)\subseteq C(G_e)_{\theta,-\theta}\ot C(G_e)_{\theta,-\theta}.$ Note that since $G_e$ is an abelian Lie-group, $C(G_e)_{\theta,-\theta}$ is a co-commutative quantum group. We claim that $l$ is supported on $C(G_e)_{\theta,-\theta}.$ Observe that $\chi_{_{G_e}}$ (the indicator function of $G_e$)
$\in C(G_e).$ Moreover, we have $\sigma_{_{\overline{z}}}(\chi_{_{G_e}})=\chi_{_{G_e}}.$ Thus $\chi_{_{G_e}}$ is identified with $\chi_{_{G_e}}^\theta:=\chi_{_{G_e}}\ot1\in(C(G)\ot C^*(\IT^{2n}_\theta))^{\sigma\times\tau^{-1}}.$ In particular, $\chi^\theta_{_{G_e}}$ is a self-adjoint idempotent in $C(G)_{\theta,-\theta}.$ It now suffices to show that $l((1-\chi^\theta_{_{G_e}})a)=0$ for all 
$a\in(C(G)_0\ota \innerl U_i|i=1,2,...2n\innerr_\IC)^{\sigma\times\tau^{-1}}.$ Let $(l,\eta,\epsilon)$ be a Schurmann triple for $l.$ Now 
\begin{equation*}
l((1-\chi^\theta_{_{G_e}})a)=l(1-\chi^\theta_{_{G_e}})\epsilon(a)+\epsilon(1-\chi^\theta_{_{G_e}})l(a)+\innerl\eta(1-\chi^\theta_{_{G_e}}),\eta(a)\innerr.
\end{equation*}
Now as $(1-\chi^\theta_{_{G_e}})^2=(1-\chi^\theta_{_{G_e}}),$ and clearly $\epsilon(1-\chi^\theta_{_{G_e}})=0,$ which implies that $1-\chi^\theta_{_{G_e}}\in ker(\epsilon)^2.$ By conditions 2 and 6 of Proposition \ref{franz}, we have $l(1-\chi^\theta_{_{G_e}})=\eta(1-\chi^\theta_{_{G_e}})=0.$ This implies that $l((1-\chi^\theta_{_{G_e}})a)=0$ for all $a\in (C(G)_0\ota \innerl U_i|i=1,2,...2n\innerr_\IC)^{\sigma\times\tau^{-1}}.$ Now as $(C(G_e))_{\theta,-\theta}$ is a co-commutative quantum group, we have 
\begin{equation}\label{co-commutativity}
(l\ot id)\Delta_\theta=(id\ot l)\Delta_\theta~  \mbox{on}~C(G_e)_{\theta,-\theta}.
\end{equation}
Let $\overline{z}=(u,v)$ for $u,v\in\IT^n.$ Let us recall that 
$\sigma_{_{\overline{z}}}=(\Omega(u)\ot id)\Delta(id\ot\Omega(-v))\Delta,$ where we have $\Omega(u):=ev_{u}\circ\pi,$ $\pi:C(G)\rightarrow C(\IT^n)$ being the surjective CQG morphism. Let $R(x):=\sigma_{_{(0,x)}}$ and $L(x):=\sigma_{_{(x,0)}}$ for $x\in\IT^n.$ By equation (\ref{co-commutativity}), we have $l(R(u)a)=l(L(u)a)$ for all $a\in C(G_e)_{\theta,-\theta}.$ Now $L(u)(C(G_i)_{\theta,-\theta})\subseteq C(G_i)_{\theta,-\theta}$ and $R(u)(C(G_i)_{\theta,-\theta})\subseteq C(G_i)_{\theta,-\theta}$ for all $i$ and $l(C(G_i)_{\theta,-\theta})=0,$ which, in combination with equation (\ref{co-commutativity}), gives 
$l(R(u)a)=l(L(u)a)$ for all $a\in C(G)_{\theta,-\theta}.$ From this, it easily follows that 
$\cll\circ\sigma_{_{\overline{z}}}=\sigma_{_{\overline{z}}}\circ\cll$ for all $\overline{z}\in\IT^{2n}.$
\end{proof}
Moreover, in subsection \ref{computation of QBM}, we shall see that condition of Theorem \ref{lifting of QBM} is indeed necessary, i.e. there may not be a `deformation' of a general quantum Gaussian generator.

\subsection{Computation of Quantum Brownian motion}\label{computation of QBM}
In this subsection, we compute the generators of QBM on the QISO of various non-commutative manifolds. We refer the reader to subsection \ref{QISO} for a recollection of the description of QISO of the non-commutative manifolds which we will consider here.
\begin{enumerate}
\item[a.] {\bf non-commutative $2$-tori:} Recall from subsection \ref{QISO} that $C^*(\IT^2_\theta)$ is the universal $C^*$-algebra generated by a pair of unitaries $U,V$ satisfying the relation $UV=e^{2\pi i\theta}VU.$
The QISO of $C^*(\IT^2_\theta)$ is a Rieffel deformation of the compact quantum group\\ $C\left(\IT^2\rtimes(\IZ_2^2\rtimes \IZ_2)\right)$ (see \cite{jyoti}). Moreover, $\IT^2\rtimes(\IZ_2^2\rtimes \IZ_2)$ is a Lie-group with abelian Lie-algebra. Hence an application of Theorem \ref{abelian Lie-algebra} and Theorem \ref{lifting of QBM} leads to the conclusion that the generators of quantum Gaussian processes on the QISO of $C^*(\IT^2_\theta)$ are precisely those coming from QISO($C(\IT^2)$)=ISO($\IT^2$)$\cong C(\IT^2\rtimes(\IZ_2^2\rtimes \IZ_2))$ i.e. they are of the form $l_\theta,$ where $l$ is a generator of classical Gaussian process on $\IT^2\rtimes(\IZ_2^2\rtimes \IZ_2),$ i.e. on its identity component $\IT^2.$ It can be seen by a direct computation that the space of $\epsilon$-derivations on QISO($C^*(\IT^2_\theta)$) is same as the space of $\epsilon$-derivations on $C(\IT^2\rtimes(\IZ_2^2\rtimes \IZ_2)).$ Moreover, all the $\epsilon$-derivations are supported on the identity component namely $C(\IT^2),$ which remains undeformed as a quantum subgroup of QISO($C^*(\IT^2_\theta)$). Thus it follows that in this case, a QBM on the undeformed CQG remains a QBM on the deformed CQG.

Using the action $\alpha$ as described in subsection \ref{QISO}, we can construct a QBM on $C^*(\IT^2_\theta)$ as described in section \ref{notun section}, and conclude that 
\bthm
Any QBM $k_t$ on $C^*(\IT^2_\theta)$ is essentially driven by a classical Brownian motion on $\IT^2,$ in the sense that $k_t:C^*(\IT^2_\theta)\rightarrow C^*(\IT^2_\theta)^{\prime\prime}\ot B(\Gamma(L^2(\IR_+,\IC^2)))\cong B(L^2(\omega_1,\omega_2)),$ where $(\omega_1,\omega_2)$ is the 2-dimensional standard Wiener measure, is given by $k_t(a)(\omega_1,\omega_2)=\alpha_{_{(e^{2\pi i \omega_1},e^{2\pi i \omega_2})}}(a).$
\ethm 

We now give an intrinsic characterization of a qunatum Gaussian (QBM) generator on $C^*(\IT^2_\theta):$

Let $\cla_0$ denote the $\ast$-subalgebra spanned by the unitaries $U,V.$ 

\bthm
A linear CCP map $\cll:\cla_0\rightarrow\cla_0$ is a generator of a quantum Gaussian process (QBM) on $C^*(\IT^2_\theta)$ if and only if $\cll$ satisfies:
\begin{enumerate}
\item[1.] $\cll(abc)=\cll(ab)c-ab\cll(c)+\cll(bc)a-bc\cll(a)+\cll(ac)b-ac\cll(b),$ for all $a,b,c\in\cla_0.$
\item[2.] $(\cll\ot id)\circ\alpha=\alpha\circ\cll,$ where $\alpha$ is the action of $\IT^2$ on $C^*(\IT^2_\theta).$\\ Moreover, $\cll$ will generate a QBM if and only if $$l_{(1,1)}-l_{(1,0)}-l_{(0,1)}<2\sqrt{Re(l_{(1,0)})Re(l_{(0,1)})},$$ where $l(U)=l_{_{(1,0)}}U,~l(V):=l_{_{(0,1)}},~l(UV):=l_{_{(1,1)}}UV.$

\end{enumerate}
\ethm

\begin{proof}
Suppose that $\cll$ is the generator of a quantum Gaussian process (QBM) on $C^*(\IT^2_\theta).$ Notice that condition (2.) implies that $U,V,UV$ are the eigenvectors of $\cll.$ Let the eigenvalues be denoted by $l_{_{(1,0)}},l_{_{(0,1)}},l_{_{(1,1)}}$ respectively. Then there exists a Gaussian (Brownian) functional $l$ on QISO($C^*(\IT^2_\theta)$)($=\clq$) with surjective Schurmann triple $(l,\eta,\epsilon),$ such that $\cll=(id\ot l)\alpha.$  Let $(\eta_i)_{i=1,2}$ be the coordinates of $\eta.$ Then since 
$l(abc)=l(ab)\epsilon(c)-\epsilon(ab)l(c)+l(bc)\epsilon(a)-\epsilon(bc)l(a)+l(ac)\epsilon(b)-\epsilon(ac)l(b)$ for $a,b,c\in\clq_0,$ we have condition 1. of the present theorem. Condition (2.) follows by a direct computation, along with the fact that if $l$ is generates a QBM, then $\eta_1,\eta_2$ spans the space $\clv_{C^*(\IT^2_\theta)}.$

Conversely, suppose that we are given a CCP functional $\cll,$ satisfying conditions $(1.)$ and ($2.$). Choose two vectors $(c_1,c_2),(d_1,d_2)\in\IR^2$ such that $c_1^2+c_2^2=-2Re(l_{(1,0)}),$ $d_1^2+d_2^2=-2Re(l_{(0,1)}),$ and $c_1d_1+c_2d_2=l_{(1,1)}-l_{(1,0)}-l_{(0,1)}.$ Consider the two $\epsilon$-derivations $\eta_1:=c_1\eta_{(1)}+d_1\eta_{(2)}$ and $\eta_2:=c_2\eta_{(1)}+d_2\eta_{(2)}.$ Define a CCP finctional $l_{new}$ on $\clq$ as :
$l_{new}(U_{11})=l_{(1,0)}$ and $l_{new}(U_{12})=l_{(0,1)},$ $l_{new}(U_{kj})=0$ for $k>1,j=1,2,$ and extend the definition to $(\clq)_0$ by the rule $l(a^*b)=l(a^*)\epsilon(b)+l(b)\epsilon(a^*)+\sum_{p=1}^2\overline{\eta_{_{1}}(a^*)}\eta_{_{p}}(b).$ Note that we have $l_{new}(abc)=l_{new}(ab)\epsilon(c)-\epsilon(ab)l_{new}(c)+l_{new}(bc)\epsilon(a)-\epsilon(bc)l_{new}(a)+l_{new}(ac)\epsilon(b)-\epsilon(ac)l_{new}(b)$ for $a,b,c\in\clq_0.$ It follows that $\cll_{new}:=(id\ot\cll_{new})\alpha$ satisfies conditions (1.) and (2.) Thus $\cll=\cll_{new}$ on $\cla_0$ and since $\cll_{new}$ generates a quantum Gaussian process (QBM) on $C^*(\IT^2_\theta),$ so does $\cll.$
\end{proof}
\begin{rmrk}
It follows from this that in a similar way, we can also characterize generators of quantum Gaussian processes on quantum spaces on which $\IT^n$ acts ergodically.
\end{rmrk}

\item[b.] {\bf The $\theta$ deformed sphere $S^n_\theta$:} 
\bthm
\begin{enumerate}
\item[(i)] Suppose that $l$ is the generator of a  quantum Gaussian process on $O_\theta(2n).$ Then it satisfies the following:

There exists $2n$ complex numbers $\{z_1,z_2,.....z_{2n}\}$ with $Re(z_i)\leq0$ for all $i$ and ${\bf A}\in M_{2n}(\IC)$  with  $A_{ii}=0~\forall~i$ and $[A_{ij}-z_i-z_j]_{ij}\geq0,$ such that 
\begin{equation}\label{ekmatra condition}
l(a^i_i)=z_i,~l(a^{i\ast}_ia^j_j)=A_{ij}~i,j=1,2,....2n.
\end{equation}

Conversely, given $2n$ complex numbers $\{z_1,z_2,.....z_{2n}\}$ and ${\bf A}\in M_{2n}(\IC),$  such that $Re(z_i)\leq0,~A_{ii}=0~\forall~i$ and $[A_{ij}-\overline{z_i}-z_j]_{ij}\geq0,$ there exists a unique map $l,$ such that $l$ generates a quantum Gaussian process and satisfies equation (\ref{ekmatra condition}).
\item[(ii)] The generator of a quantum Gaussian process say $l$ generates a QBM if and only if the matrix 
\begin{equation}
\left[l(a^{\mu\ast}_\mu a^\nu_\nu)-l(a^{\mu\ast}_\mu)-l(a^\nu_\nu)\right]_{\mu,\nu}\in M_{2n}(\IC)
\end{equation}
is invertible.

\item[(iii)] $l$ generates a bi-invariant quantum Gaussian process if and only if $z_\beta=z$ for all $\beta=1,2,..2n,$ where $z\in\IR$ such that $z\leq0.$
\end{enumerate}
\ethm
\begin{proof}
Let us first calculate all possible $\epsilon$-derivations. Let $\eta$ be an $\epsilon$-derivation on this CQG. Put $\eta(a^\mu_\nu)=c^\mu_\nu,$ $\eta(\overline{a^\mu_\nu})=\widehat{c^\mu_\nu},$ $\eta(b^\mu_\nu)=d^\mu_\nu,$ $\eta(\overline{b^\mu_\nu})=\widehat{d^\mu_\nu},$ $\mu.\nu=1,2,...n.$ Using condition (a), we get 
\begin{equation*}
c^\mu_\nu\delta^\tau_\rho+c^\tau_\rho\delta^\mu_\nu=\lambda_{\mu\tau}\lambda_{\rho\nu}(c^\mu_\nu\delta^\tau_\rho+c^\tau_\rho\delta^\mu_\nu);
\end{equation*}
putting $\tau=\rho,$ we get $c^\mu_\nu=0$ for $\mu\neq\nu.$ Likewise using conditions (b) and (c), we get $\widehat{c}^\mu_\nu=d^\mu_\nu=\widehat{d}^\mu_\nu=0$ for $\mu\neq\nu.$ Using condition (d) with $\alpha=\beta,$ we arrive at the following relations:
\begin{equation*}
\begin{split}
&\widehat{c}^\alpha_\alpha+c^\alpha_\alpha=0~(since~\eta(1)=0),\\
&d^\alpha_\alpha+d^\alpha_\alpha=0,\\
&\widehat{d}^\alpha_\alpha+\widehat{d}^\alpha_\alpha=0;
\end{split}
\end{equation*}
this implies that $c^\alpha_\beta=c_\beta\delta_{\alpha\beta},$ $\widehat{c}^\alpha_\beta=-c_\beta\delta_{\alpha\beta}$ for $n$ complex numbers $\{c_1,c_2,...c_n\}.$ It may be noted that all the above steps are reversible, and hence this also characterizes $\epsilon$-derivations on $O_\theta(2n).$ Note that the space of $\epsilon$-derivations, $\clv_{O_\theta(2n)}$ is $2n$-dimensional and is spanned by $n$ $\epsilon$-derivations $\{\eta_{(1)},\eta_{(2)},...\eta_{(2n)}\},$ where $(\eta_{(k)}(a^\alpha_\beta))_{\alpha,\beta}=E_{kk},$ where $E_{ij}$ denote an elementary matrix.

Now we prove (i) as follows:

Let $l$ be the generator of a quantum Gaussian process. Let the surjective Schurmann triple of $l$ be $(l,\eta,\epsilon).$ Let $(\eta_i)_i$ be the coordinates of $\eta,$ which are $\epsilon$-derivations. By Lemma \ref{bounded by the noise space}, there can be atmost $2n$ such coordinates. Let $\eta_i(a^\alpha_\beta)=c^{(i)}_{\alpha\beta}$ and $\eta_i(a^{\alpha\ast}_\beta)=\widehat{c}^{(i)}_{\alpha\beta}$ such that $c^{(i)}_{\alpha\beta}=c^{(i)}_\beta\delta_{\alpha\beta}$ and $\widehat{c}^{(i)}_{\alpha\beta}=-c^{(i)}_\beta\delta_{\alpha\beta}.$ Suppose that $l(a^\alpha_\beta)=l_{\alpha\beta}$ and $l(b^\alpha_\beta)=l^\prime_{\alpha\beta}.$ Then using the relations among the generators of $O_\theta(2n),$ as given in subsection \ref{QISO}, we arrive at the following results:

\begin{equation*}
\begin{split}
& l_{\alpha\beta}=0~\mbox{for all $\alpha\neq\beta$},\\
& \overline{l}_{\alpha\alpha}+l_{\alpha\alpha}=-\sum_i|c_\alpha^{(i)}|^2~\mbox{for all $\alpha=1,2,...2n$},\\
& l^{\prime}_{\alpha\beta}=0 ~\mbox{for all $\alpha,\beta$}.
\end{split}
\end{equation*}
Moreover, we have $l(a^*b)-l(a^*)\epsilon(b)-\epsilon(a^*)l(b)=\innerl\eta(a),\eta(b)\innerr,$ so that by taking 
$z_i:=l(a^i_i),~{\bf A}:=[l(x^{i\ast}_ia^j_j)]_{ij}$  we have the result. 

Conversely, suppose that we are given $2n$ complex numbers $\{z_1,z_2,...z_{2n}\}$ such that $Re(z_i)\leq0$ for all $i$ and ${\bf A}\in M_{2n}(\IC),$ satisfying the hypothesis. Let ${\bf B}:=[A_{ij}-\overline{z_i}-z_j]_{ij}.$ Suppose that  ${\bf P}:={\bf B}^{\frac{1}{2}}.$ Let us define $2n$ $\epsilon$-derivations $(\eta_i)_{i=1}^{2n}$ by 
$\eta_k:=\sum_{i=1}^{2n}\overline{P_{ik}}\eta_{_{(i)}},$ $k=1,2,....2n.$ Let $\eta:=\sum_{i=1}^{2n}\eta_i\ot e_i,$ where $\{e_i\}_i$ is the standard basis of $\IC^{2n}.$ Define a CCP map $l$ on $O_\theta^{alg}(2n)$ by the prescription $l(a^i_i)=z_i,l(a^i_j)=0~\mbox{for}~i\neq j,$ $l(b^i_j)=0~\forall~ i,j$ and extending the map to $O_\theta^{alg}(2n)$ by the rule $l(a^*b)=l(a^*)\epsilon(b)+\epsilon(a^*)l(b)+\innerl\eta(a),\eta(b)\innerr.$ Such a map is clearly the generator of a quantum Gaussian process on $O_\theta(2n)$ and it satisfies $l(a^{i\ast}_ia^j_j)=A_{ij}.$ The uniqueness follows from the fact that a generator of a quantum Gaussian process on $O_\theta(2n)$ must satisfy the identity:
$$l(abc)=l(ab)\epsilon(c)-\epsilon(ab)l(c)+l(bc)\epsilon(a)-\epsilon(bc)l(a)+l(ac)\epsilon(b)-\epsilon(ac)l(b),$$ for all $a,b,c\in O_\theta^{alg}(2n).$ 

For proving (ii), let us proceed as follows:

Let $l$ be the generator of a QBM and let $(l,\eta,\epsilon)$ be the surjective Schurmann triple associated with $l.$
Suppose that $(\eta_i)_i$ are the coordinates of $\eta.$ Then by hypothesis, $\{\eta_1,\eta_2,...\eta_{2n}\}$ forms a basis for $\clv.$ Let $\eta_k=\sum_i c^{(k)}_i\eta_{(i)}.$ Consider the $2n\times 2n$ matrix ${\bf P}$ such that 
$P_{ij}:=\overline{c^{(j)}_i}.$ Then ${\bf P^*P}$ is an invertible matrix. Moreover, we have $[l(a^{i\ast}_ia^j_j)-l(a^{i\ast}_i)-l(a^j_j)]_{ij}={\bf P^*P},$ which proves our claim. 

Conversely, suppose that $l$ is the generator of a quantum Gaussian process, such that ${\bf B}:=[l(a^{i\ast}_ia^j_j)-l(a^{i\ast}_i)-l(a^j_j)]_{ij}$ is an invertible matrix. Let $(l,\eta,\epsilon)$ be the surjective Schurmann triple associated to $l.$ Let $(\eta_i)_i$ be the coordinates of $\eta.$ Let 
$\eta_k=\sum_i c^{(k)}_i\eta_{(i)},$ for all $k.$ Let ${\bf P}:=[c^{(j)}_i]_{ij}.$ Then we have ${\bf P^*P}={\bf B},$ which implies that the matrix ${\bf P}$ is invertible, and hence $\{\eta_i\}_{i=1}^{2n}$ forms a basis for $\clv_{_{O_\theta(2n)}},$ which proves the claim. 

(iii) follows by a direct computation using the formula for coproduct, as given in subsection \ref{QISO}.
\end{proof}
We have the following obvious corollary, which follows from $(iii)$ of the theorem above and the definition of quantum Gaussian process on quantum homogeneous space.
\begin{crlre}
A map $\cll_{_{S^{2n-1}_\theta}},$ which generates a qunatum Gaussian process on $S^{2n-1}_\theta,$ satisfy:
$\cll_{_{S^{2n-1}_\theta}}(z^\mu)=cz^\mu,$ for some real number $c\leq0.$
\end{crlre}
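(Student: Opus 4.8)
The plan is to unwind the definition of a generator of a covariant quantum Gaussian process on the quantum homogeneous space $S^{2n-1}_\theta$ and to combine it with the explicit description of the action $\alpha$ of $QISO^{\cll}=O_\theta(2n)$ on $S^{2n-1}_\theta$ together with part (iii) of the preceding theorem. Recall that, by definition, such a generator has the form $\cll_{S^{2n-1}_\theta}=(id\ot l)\circ\alpha$ for some generator $l$ of a \emph{bi-invariant} quantum Gaussian process on $O_\theta(2n)$, and that (by homogeneity of $S^{2n-1}_\theta$ under $O_\theta(2n)$, which makes this notion available) the action is given on the algebra generators by $\alpha(z^\mu)=\sum_{\nu=1}^{2n}\big(z^\nu\ot a^\mu_\nu+\overline{z}^\nu\ot b^\mu_\nu\big)$.

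First I would record the relevant values of $l$ on the matrix coefficients $a^\mu_\nu$ and $b^\mu_\nu$. From the computation carried out in the proof of part (i) of the preceding theorem, any Gaussian generator $l$ on $O_\theta(2n)$ satisfies $l(a^\mu_\nu)=0$ whenever $\mu\neq\nu$ and $l(b^\mu_\nu)=0$ for all $\mu,\nu$; and by part (iii), bi-invariance forces $l(a^\mu_\mu)=c$ for a single real constant $c\le 0$, independent of $\mu$. Feeding this into $(id\ot l)\circ\alpha$ gives
\[
\cll_{S^{2n-1}_\theta}(z^\mu)=\sum_{\nu=1}^{2n}\big(z^\nu\, l(a^\mu_\nu)+\overline{z}^\nu\, l(b^\mu_\nu)\big)=z^\mu\, l(a^\mu_\mu)=c\, z^\mu ,
\]
which is exactly the assertion.

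I do not expect any genuine obstacle: the corollary is essentially a substitution. The only point that requires a little care is to match the index convention in the formula for $\alpha$ with the off-diagonal vanishing of $l$ --- that is, to check that it is the lower index $\nu$ of $a^\mu_\nu$ that gets summed against $z^\nu$, so that only the diagonal term $\nu=\mu$ survives --- and to note explicitly that the homogeneity of $S^{2n-1}_\theta$ under its quantum isometry group $O_\theta(2n)$ is what legitimises speaking of a covariant quantum Gaussian process on $S^{2n-1}_\theta$ in the first place.
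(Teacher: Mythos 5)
Your proposal is correct and follows exactly the paper's intended route: the corollary is obtained by writing $\cll_{S^{2n-1}_\theta}=(id\ot l)\circ\alpha$ for a bi-invariant Gaussian generator $l$ on $O_\theta(2n)$, invoking the vanishing $l(a^\mu_\nu)=0$ for $\mu\neq\nu$ and $l(b^\mu_\nu)=0$ established in the proof of part (i), and then part (iii), which forces $l(a^\mu_\mu)=c$ with $c\in\IR$, $c\leq0$, so that only the diagonal term survives in the action formula. The paper states this as an ``obvious corollary'' of (iii) and the definition, and your write-up is simply a fuller spelling-out of the same substitution argument.
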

\begin{rmrk}
Notice that the space of $\epsilon$-derivations on the undeformed algebra $O(2n)$ has dimension more than $2n,$ since there are $\epsilon$-derivations, which takes non-zero values on $(b^\mu_\nu)_{\mu\nu}$ and hence there are quantum Gaussian processes on $O(2n)$ such that their generators take non-zero values on $b^\alpha_\beta,$ and so there is no 1-1 correspondence between quantum Gaussian processes on the deformed and undeformed algebra in this case. 
\end{rmrk}

\item [c.]{\bf The free orthogonal group  $O_+(2n)$}: 
We refer the reader to subsection \ref{QISO} again, for the definition and formulae for the free orthogonal group. Before stating the main theorem, we introduce some notations for convenience. Let 
${\bf A}\in M_{n(2n-1)}(\IC).$ We will index the elements of ${\bf A}$ by the set $\IN^4$ instead of $\IN^2$ as follows:

Let ${\bf A}=\begin{pmatrix}
{\bf A_1}\\            
{\bf A_2}\\
.\\
.\\
.\\
{\bf A_{2n-1}}
\end{pmatrix},$ where ${\bf A_i}$ is a $n(2n-1)\times(2n-i+1)$ matrix, such that
\begin{equation*}
\begin{split}
{\bf (A_i)_{kl}}=&
a_{_{(i,i+k,1,1+l)}}\chi_{_{\{1,2,...2n-1\}}}(l)\\
&+a_{_{(i,i+k,2,3+(l-2n))}}\chi_{_{\{2n,2n+1,...4n-3\}}}(l)\\
&+a_{_{(i,i+k,3,4+(l-(4n-2)))}}\chi_{_{\{4n-2,...6n-2\}}}(l)\\
&+\\
&.\\
&.\\
&.\\
&+a_{_{(i,i+k,2n-1,2n)}}\chi_{_{\{n(2n-1)\}}}(l),
\end{split}
\end{equation*}
for $k=1,2,...2n-i+1,$ where $\chi_{_{B}}$ denotes the indicator function of the set $B.$ We now state the main theorem:
\bthm
\begin{enumerate}
\item[(i)]There exists a 1-1 correspondence between generators of quantum Gaussian processes on $O_+(2n)$ and matrices ${\bf L}:=[L_{ij}]\in M_{2n}(\IC)$ and ${\bf A}:=[A_{ij}]\in M_{n(2n-1)}(\IC),$ satisfying
\begin{enumerate}
\item[{\bf a.}]~${\bf B}\in M_{n(2n-1)}(\IC),$ defined by ${\bf B}:=[a_{_{(i,j,k,l)}}-\overline{L_{ij}}-L_{kl}],~i<j,~k<l$ is positive definite,
\item[{\bf b.}]~$L_{ij}+L_{ji}:=-\sum_{k=1}^{i-1}a_{_{(k,i,k,j)}}+\sum_{k=i+1}^{j-1}a_{_{(i,k,k,j)}}-\sum_{k=j+1}^{2n}a_{_{(i,k,j,k)}},~i<j.$ 
\end{enumerate}
\item[(ii)] $l$ will generate a QBM if and only if the matrix ${\bf B},$ defined above, is invertible.
\item[(iii)] There exists no bi-invariant quantum Gaussian process on $O_+(2n).$
\end{enumerate}
\ethm
\begin{proof}
Using the relations among the generators, as given in subsection \ref{QISO}, it is seen that the epsilon-derivations on this algebra are given by 
\begin{equation*}
\eta(x_{ij})=A_{ij};
\end{equation*}
such that $A_{ij}=-A_{ji}.$ Clearly this characterizes the $\epsilon$-derivations on the CQG. Observe that the space of $\epsilon$-derivations, $\clv_{O_+(2n)}$ has dimension $n(2n-1).$ A basis for the space is given by 
$\{\eta_{(ij)}\}_{i<j},$ such that $\eta_{(ij)}(x_{ij})=1,~\eta_{(ij)}(x_{ji})=-1$ and $\eta_{(ij)}(x_{kl})=0$ for $k\neq i,j$ or $l\neq i,j.$ So after a suitable re-indexing, let us denote the basis by $\{\eta_{_{(p)}}\}_{p=1}^{n(2n-1)}.$

We prove (i):

Let $l$ be the generator of a quantum Gaussian process on $O_\theta(2n),$ with the surjective Schurmann triple
$(l,\eta,\epsilon).$  Let $(\eta_i)_i$ be the coordinates of $\eta.$ By Lemma \ref{bounded by the noise space}, there can be atmost $n(2n-1)$ coordinates. Let ${\bf A^{(i)}}:=((\eta_i(x_{kl})))_{kl}.$ Now using the relations among the generators, as described in subsection \ref{QISO}, we see that
\begin{equation*}
((l(x_{ij})))_{i,j}={\bf L}
\end{equation*}
such that 
\begin{equation*}
{\bf L}_{ij}+{\bf L_{ji}}=-\sum_{s=1}^{n(2n-1)}\sum_{k=1}^{2n}\overline{A^{(s)}_{ik}}A^{(s)}_{jk},~i<j.
\end{equation*}
Thus by taking $a_{_{(i,j,k,l)}}:=l(x_{ij}x_{kl})$ and $ L_{ij}:=l(x_{ij}),$ the conclusion follows. 

Conversely, suppose that we are given matrices ${\bf L}\in M_{2n}(\IC),~{\bf A}\in M_{n(2n-1)}(\IC)$ satisfying the hypothesis in (i). Let ${\bf P}:={\bf B}^{\frac{1}{2}}.$ Define $n(2n-1)$ $\epsilon$-derivations by $\eta_{_{p}}:=\sum_{k=1}^{n(2n-1)}{\bf P_{pk}}\eta_{_{(k)}},$\\ $p=1,2,...n(2n-1).$ Define a CCP map by the prescription $l(x_{ij}):= L_{ij},$ and extending the definition to $O_+^{alg}(2n)$ by the rule $l(a^*b)=l(a^*)\epsilon(b)+\epsilon(a^*)l(b)+\sum_{p=1}^{n(2n-1)}\overline{\eta_{_{p}}(a)}\eta_{_{p}}(b),$ where $O^{alg}_+(2n)$ is the $\ast$-algebra generated by $x_{ij},i,j=1,2,...n(2n-1).$ Clearly such a functional satisfies $l(x_{ij}x_{kl})=a_{_{(i,j,k,l)}},~i<j,~k<l.$ The uniqueness follows from the fact that $l$ satisfies $l(abc)=l(ab)\epsilon(c)-\epsilon(ab)l(c)+l(bc)\epsilon(a)-\epsilon(bc)l(a)+l(ac)\epsilon(b)-\epsilon(ac)l(b),~a,b,c\in O^{alg}_+(2n).$

(ii) follows from the fact that the invertibility of the matrix ${\bf B}$ implies the invertibility of the matrix ${\bf P}:={\bf B}^{\frac{1}{2}},$ so that $\{\eta_{_{i}}\}_{i=1}^{n(2n-1)},$ as defined in (i), forms a basis for $\clv_{_{O_+(2n)}}.$

(iii) can be proven as follows:

\bthm\label{non-existence of bi-invariant QBM}
Suppose $\cll$ is the generator of a bi-invariant QBM on the free orthogonal group. Then $\cll\equiv0.$
\ethm
\begin{proof}
Since $\cll$ is bi-invariant, we have 

\begin{equation}\label{first equation of bi-invariance}
(id\ot\cll)\Delta(x_{ij})=(\cll\ot id)\Delta(x_{ij})
\end{equation}
and 
\begin{equation}\label{second equation of bi-invariance}
(id\ot\cll)\Delta(x_{ij}x_{kl})=(\cll\ot id)\Delta(x_{ij}x_{kl})~\mbox{where $i\neq j$ and $k\neq l$};
\end{equation}
comparing the coefficients in (\ref{first equation of bi-invariance}) and (\ref{second equation of bi-invariance}), we get 
\begin{equation*}
\begin{split}
&\cll(x_{ij})=0~\mbox{for $i\neq j$};\\
&\cll(x_{ij}x_{kl})=0~\mbox{for $i\neq j$ and $k\neq l$};
\end{split}
\end{equation*}
substituting $k=i,~l=j,(i\neq j)$ in the second equation, we get 
\begin{equation*}
0=\cll(x_{ij}x_{kl})=\sum_{p\geq1}\overline{\eta_{_{(p)}}(x_{ij})}\eta_{_{(p)}}(x_{ij})=\sum_{p\geq1}|\eta_{_{(p)}}(x_{ij})|^2,~i\neq j;
\end{equation*}
where $\eta_{_{p}}$ is an $\epsilon$-derivation for each $p.$  This implies $\eta_{_{(p)}}\equiv0,$ since $\eta_{_{(p)}}(x_{ii})=0.$ Thus $\cll$ becomes an $\epsilon$-derivation. But $\cll(x_{ij})=0$ for $i\neq j.$ Thus we have $\cll\equiv0.$ 
\end{proof}
\begin{rmrk}
Theorem \ref{non-existence of bi-invariant QBM} implies that there does not exists any quantum Brownian motion on the quantum space $S_{2n-1}^+$ (i.e. the free sphere) in the sense described in subsection \ref{notun section}.
\end{rmrk}
\end{proof}

\end{enumerate}

\section{Exit time of Quantum Brownian motion on non-commutative torus.}
\subsection{Motivation and formulation}
We shall first recast the classical results about the assymptotics of exit time of Brownian motion in a form which will be easily generalized to the quantum set-up.

Let $M$ be a Riemannian manifold of Dimension $d$ which is also a homogeneous space. Therefore $M$ can be realized as $K/G$, where $G$ is the isometry group of $M$ and $K$ is a compact subgroup of $G$. For $m\in M,$ let $\clb_t^m$ denote the standard Brownian motion on $M$ starting at $m$, as described in section \ref{notun section}. Let $\wA$ denote the universal enveloping von-Neumann algebra of $C(M)$. Let us define a map $j_t:\wA\rightarrow\wA\ot B(L^2(\IP))$ by:
$j_t(f)(x,\omega):=f(\clb_t^x(\omega))$, for $f\in C(M)$ and extending the map to $\wA$, where $\IP$ denote the $d$-dimensional Wiener measure. 

Let $B^x_r$ denote a ball of radius $r$ around $x\in M.$ Let $\tau_{_{B^x_r}}$ be the exit time of the Brownian motion from the ball $B^x_r.$ Then $\{\tau_{_{B^x_r}}>t\}=\{\clb_s^x\in B^x_r\forall~0\leq s\leq t\},$ so that we have\\ $\chi_{_{\{\tau_{B^x_r}>t\}}}=\bigwedge_{s\leq t}\left(\chi_{_{\{\clb_s^x\in B^x_r\}}}\right),$ where $\bigwedge$ denotes infimum and for a set $A$, $\chi_A$ denotes the indicator function on the set $A$. In terms of the map $j_t$, we have $$\chi_{_{\{\tau_{_{B^x_r}}>t\}}}(\cdot)=\bigwedge_{s\leq t}~j_s(\chi_{_{B^x_r}})(x,\cdot)=\bigwedge_{s\leq t}((ev_x\ot id)\circ j_s(\chi_{_{B^x_r}}))(\cdot).$$  
Now by the Wiener-It$\hat{o}$ isomorphism (see \cite{krp}), $L^2(\IP)\wtil{=}\Gamma(L^2(\IR_+,\IC^d)).$ Thus we may view $\tau_{_{B^x_r}}$ as a family of projections in $\wA\ot B(\Gamma(L^2(\IR_+,\IC^d)))$ defined by $$\tau_{_{B^x_r}}\left([0,t)\right)={\bf 1}-\wedge_{s\leq t}( j_s(\chi_{_{B^x_r}}))~.$$ 
\paragraph{}
We recall from subsection \ref{Classical Brownian motion}, the asymptotic behaviour of $\IE(\tau_{_{B^x_r}})$ as $r\rightarrow0.$
Now one has\\ $\IE(\tau_{_{B^x_r}})=\int_0^\infty\IP(\tau_{_{B^x_r}}>t)dt=\int_0^\infty\innerl e(0),\{(ev_x\ot1)\left(\wedge_{s\leq t} j_s(\chi_{_{B^x_r}})\right)\}e(0)\innerr dt,$ since $\tau_{_{B^x_r}}$ is a positive random variable. Note that the points of $M$ are in $1-1$ correspondence with the pure states and $\{P_r=\chi_{B^x_r}\}_{r\geq0}$ is a family of projections on $\wA$ satisfying $vol(P_r)\rightarrow0$ as $r\rightarrow0$ and $ev_x(P_r)=1~\forall r.$ One can slightly generalize this as follows:

Choose a sequence $(x_n)_n\in M$ and positive numbers $\epsilon_n$ such that $x_n\rightarrow x$ and $\epsilon_n\rightarrow0.$ Now for large $n_0$ the random variable $\chi_{_{\{\clb_s^{x_n}\in B^{x_n}_{\epsilon_n}\}}}(\cdot)$ has the same distributioin as the random variable $\chi_{_{\{\clb^x_s\in B^x_{\epsilon_n}\}}}$ for each $s\geq0.$ 
Thus, 
$$\IE(\tau_{_{B_{\epsilon_n}^{x_n}}})=\IE(\tau_{_{B^x_{\epsilon_n}}})=\int_0^\infty\innerl e(0),\{(ev_{x_n}\ot id)\left(\wedge_{s\leq t} j_s(\chi_{_{B^{x_n}_{\epsilon_n}}})\right)\}e(0)\innerr dt$$ which implies that the asymptotic behaviour of $\IE(\tau_{_{B_{\epsilon_n}^{x_n}}})$ and $\IE(\tau_{_{B^x_{\epsilon_n}}})$ will be the same.

For a non-commutative generalization of the above, we need the notion of quantum stop time. There are several formulations of this concept \cite{attal-kbs,krp-kbs,barnett}. The one most suitable for us is the following:

\bdfn\cite{barnett}\label{qstop}[Barnette]
Let $(\mathfrak{A}_t)_{t\geq0}$ be an increasing family of von-Neumann algebras (called a filtration). A quantum random time or stop time adapted to the filtration $(\mathfrak{A}_t)_{t\geq0}$ is an increasing family of projections $(E_t)_{t\geq0},~E_\infty=I$ such that $E_t$ is a projection in $\mathfrak{A}_t$ and $E_s\leq E_t$ whenever $0\leq s\leq t<+\infty.$ Furthermore, for $t\geq s,$ 
$E_t\downarrow E_s$ as $t\downarrow s.$
\edfn

Observe that by our definition, $\tau_{_{B_r}}([0,t))$ is adapted to the  filtration $(\mathfrak{A}_t)_{t\geq0},$ where\\ $\mathfrak{A}_t:=\wA\ot B(\Gamma_{t]})$ $\left(\Gamma_{t]}:=\Gamma\left(L^2([0,t],\IC^n)\right)~\right)$, for $\tau_{_{B_r}}([0,t])\in\mathfrak{A}_t\ot1_{\Gamma_{[t}}.$ 

Suppose that we are given an E-H flow $j_t:\cla\rightarrow\cla^{\prime\prime}\ot B(\Gamma(L^2(\IR_+,k_0))),$ where $\cla$ is a $C^*$ or von-Neumann algebra. For a projection $P\in\cla,$ the family $\{{\bf 1}-\wedge_{s\leq t}\left(j_s(P)\right)\}_{t\geq0}$ defines a quantum random time adapted to the filtration $\left(\cla^{\prime\prime}\ot B(\Gamma_{t]})\right)_{t\geq0}.$ Let us assume, furthermore, that $\cla$ is the $C^*$ or von-Neumann
closure of the `smooth algebra' $\cla^\infty$ of a $\Theta$-summable, admissible spectral triple and $j_t$ is a QBM on it.
\bdfn
We refer to the quantum random time $\{1-\bigwedge_{s\leq t}j_s(P)\}_{t\geq0}$ as the `exit time from the projection $P$'.
\edfn
Motivated by the Propostion \ref{pinsky} and the discussion after it, we would like to formulate a quantum analogue of the exit time asymptotics and study it in concrete examples.

Let $\tau$ be the non-commutative volume form corresponding to the spectral triple, and assume that we are given a family $\{P_n\}_{n\geq1}$ of projections in $\cla,$ and a family $\{\omega_n\}_{n\geq1}$ of pure states of $\cla$ such that 
\begin{itemize}
\item $\omega_n$ is weak$^\ast$ convergent to a pure state $\omega,$
\item $\omega_n(P_n)=1$ for all $n,$ 
\item $v_n\equiv\tau(P_n)\rightarrow0$ as $n\rightarrow\infty.$
\end{itemize}

\bdfn
Let $\gamma_n:=\int_0^\infty dt \innerl e(0),(\omega_n\ot id)\circ\bigwedge_{s\leq t}j_s(P_n)e(0)\innerr.$ We say that there is an exit time asymptotic for the family $\{P_n;\omega_n\}$ of intrinsic dimension $n_0$ if 
$$\lim_{n\rightarrow\infty}\frac{\gamma_n}{v_n^{\frac{2}{m}}}=
\begin{cases}
\infty~\mbox{if $m$ is just less than $n_0$}\\
\neq0~\mbox{if $m\neq n$}\\
=0~\mbox{if $m>n$}
\end{cases}
$$ 
and 
\begin{equation}\label{assymptotic expansion}
\gamma_n=c_1v_n^{\frac{2}{n_0}}+c_2 v_n^{\frac{4}{n_0}}+\cdot\cdot\cdot c_kv_n^{\frac{2^k}{n_0}}+O(v_n^{\frac{2^{k+1}}{n_0}})~\mbox{as}~n\rightarrow\infty.
\end{equation}
\edfn

It is not at all clear whether such an asymptotic exists in general, and even if it exists, whether it is independent of the choice of the family $\{P_n;\omega_n\}.$ If it is the case, one may legitimately think of $c_1,c_2$ as geometric invariants and imitating the classical formulae (\ref{intrinsic dimension}) and (\ref{formula for mean curvature}), the extrinsic dimension $d$ and the mean curvature $H$ of the non-commutative manifold may be defined to be 
\begin{equation}\label{extrinsic dimension}
d:=\frac{1}{2c_1}(\frac{n_0}{\alpha_{n_0}})^{\frac{2}{n_0}}+1,
\end{equation}
\begin{equation}\label{formula for mean curvature1}
H^2:=8(d+1)c_2(\frac{\alpha_{n_0}}{n_0})^{\frac{4}{n_0}}.
\end{equation}
\subsection{A case-study: non-commutative Torus.}
Fix an irrational number $\theta\in [0,1].$
We refer the reader to [\cite{davidson},page 173] for a natural class of projections in $C^*(\IT^2_\theta),$ which we will be using in this section.
\paragraph{}
Let $tr$ be the canonical trace in $C^*(\IT^2_\theta),$ given by $tr(\sum_{m,n}a_{mn}U^mV^n)=a_{00}.$ This trace will be taken as an analogue of the volume form in $C^*(\IT^2_\theta).$ Throughout the section, we will consider $C^*(\IT^2_\theta)$ as a concrete $C^*$-subalgebra of $B(\wH),$ where $\wH$ denote the so-called universal enveloping Hilbert space for $C^*(\IT^2_\theta),$ and let $W^*(\IT^2_\theta)$ be the universal enveloping von-Neumann algebra of it. i.e. the weak closure of $C^*(\IT^2_\theta)$ in $B(\wH).$ For $(x,y)\in\IT^2,$ let $\alpha_{_{(x,y)}}$ denote the canonical action of $\IT^2$ on $C^*(\IT^2_\theta)$ given by $\alpha_{_{(x,y)}}(\sum_{m,n}a_{mn}U^mV^n)=\sum_{m,n}x^my^na_{mn}U^mV^n.$ For a projection $P,$ let $A_{(t,s)}(P):=A_{_{s,t}}(P).$
Note that each $\alpha_{(x,y)}$ extends as a normal automorpihsm of $W^*(\IT^2_\theta).$ On $C^*(\IT^2_\theta),$ there are two conditional expectations denoted by $\phi_1,\phi_2,$ which are defined as:
\begin{equation*}
\phi_1(A):=\int_0^1\alpha_{_{(1,e^{2\pi it})}}(A)dt,~~\phi_2(A):=\int_0^1\alpha_{_{(e^{2\pi it},1)}}(A)dt.
\end{equation*}
By universality of $W^*(\IT^2_\theta),$ $\phi_1,\phi_2$ extend on $W^*(\IT^2_\theta)$ as well.

Let $\mathfrak{X}=\{A\in W^*(\IT^2_\theta)|~A=f_{-1}(U)V^{-1}+f_0(U)+f_1(U)V,~f_1,f_0\in L^\infty(\IT), f_{-1}(t):=\overline{f_1(t+\theta)}\}.$
\begin{lmma}\label{closure-WOT}
The subspace $\mathfrak{X}$ is closed in the ultraweak topology. 
\end{lmma}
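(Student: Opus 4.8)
The plan is to exhibit $\mathfrak{X}$ as the intersection of an ultraweakly closed subspace with the set on which two ultraweakly continuous maps agree; both ingredients will be built from the circle (``gauge'') action $y\mapsto\alpha_{(1,y)}$ that is already implicit in the conditional expectation $\phi_1$.

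First I would introduce, for $k\in\IZ$, the twisted average $\Phi_k(A):=\int_0^1 e^{-2\pi i k t}\,\alpha_{(1,e^{2\pi i t})}(A)\,dt$ for $A\in W^*(\IT^2_\theta)$, the integral understood ultraweakly; note $\Phi_0=\phi_1$. Since $t\mapsto\alpha_{(1,e^{2\pi it})}$ is a pointwise ultraweakly continuous group of normal automorphisms and $e^{-2\pi ikt}\in L^1[0,1]$, each $\Phi_k$ is a well-defined normal (i.e.\ ultraweakly continuous) linear map. A change of variables gives $\alpha_{(1,y)}\circ\Phi_k=y^k\Phi_k$, so $\Phi_k(A)V^{-k}$ is fixed by every $\alpha_{(1,y)}$; the fixed-point algebra of the gauge action is the range of the normal conditional expectation $\phi_1$, namely the von Neumann algebra generated by $U$, which we identify via functional calculus with $L^\infty(\IT)$ exactly as in the statement. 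Hence $\Phi_k(A)=f_k(U)V^k$ for a unique $f_k\in L^\infty(\IT)$, and, using $\alpha_{(1,y)}(g(U)V^j)=y^jg(U)V^j$, one gets $\Phi_j\circ\Phi_k=\delta_{jk}\Phi_k$.

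Next, put $E:=\Phi_{-1}+\Phi_0+\Phi_1$. By the last relation $E$ is a normal idempotent, so $\operatorname{ran}E=\ker(\operatorname{id}-E)$ is ultraweakly closed; moreover $\operatorname{ran}E$ equals the subspace $\mathfrak{Y}$ of all $f_{-1}(U)V^{-1}+f_0(U)+f_1(U)V$ with $f_{-1},f_0,f_1\in L^\infty(\IT)$ (each such element is $E$-fixed because $\Phi_k(f_j(U)V^j)=\delta_{jk}f_j(U)V^j$, and conversely $E(A)=\sum_{|k|\le1}\Phi_k(A)$ has this form by the previous paragraph). It remains to account for the relation $f_{-1}(t)=\overline{f_1(t+\theta)}$. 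From $UV=e^{2\pi i\theta}VU$ one has $V^{-1}g(U)V=g(e^{2\pi i\theta}U)$, whence $(f_1(U)V)^*=V^{-1}\overline{f_1}(U)=h(U)V^{-1}$ with $h(t)=\overline{f_1(t+\theta)}$; so for $A\in\mathfrak{Y}$ the constraint defining $\mathfrak{X}$ is precisely $\Phi_{-1}(A)=(\Phi_1(A))^*$. Therefore $\mathfrak{X}=\mathfrak{Y}\cap\{A: \Phi_{-1}(A)=(\Phi_1(A))^*\}$, and the latter is the set on which the ultraweakly continuous maps $A\mapsto\Phi_{-1}(A)$ and $A\mapsto(\Phi_1(A))^*$ (the second being a composition of the normal map $\Phi_1$ with the ultraweakly continuous involution) coincide, hence ultraweakly closed. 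So $\mathfrak{X}$ is ultraweakly closed.

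The only non-routine points are the ultraweak continuity of $\Phi_k$ and the identification of the gauge fixed-point algebra with $L^\infty(\IT)$, and this is where I expect the (mild) effort to go. For the first, one uses the standard fact that an ultraweak integral of a pointwise ultraweakly continuous family of normal maps against an $L^1$ weight is normal: test $\Phi_k$ against an element of the predual and invoke dominated convergence, using that $C^*(\IT^2_\theta)$ is ultraweakly dense in $W^*(\IT^2_\theta)$ and that the gauge action is ultraweakly continuous. For the second, $\phi_1$ is a normal conditional expectation whose range is ultraweakly closed and contains $C^*(U)\cong C(\IT)$, while a gauge-invariant element has vanishing $V$-Fourier coefficients of nonzero degree, forcing the range to be $\{U\}''$. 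The remaining manipulations are bookkeeping with the commutation relation of $U$ and $V$.
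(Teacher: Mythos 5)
Your argument is, at its core, the same mechanism the paper uses: everything rests on the normality of the gauge average $\phi_1$ together with the ultraweak continuity of multiplication by $V^{\pm1}$ and of the involution, since your twisted averages are nothing but $\Phi_k(A)=\phi_1(AV^{-k})V^k$. The paper runs this through a convergent net, reading the three coefficients off $\phi_1(A_\beta)$, $\phi_1(A_\beta V)$, $\phi_1(A_\beta V^{-1})$; you instead exhibit $\mathfrak{X}$ as the intersection of the range of the normal idempotent $E=\Phi_{-1}+\Phi_0+\Phi_1$ with the equalizer of the normal maps $\Phi_{-1}$ and $A\mapsto\Phi_1(A)^*$. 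That is a tidier, net-free packaging, and your observation that the constraint $f_{-1}(t)=\overline{f_1(t+\theta)}$ is exactly the ultraweakly closed condition $\Phi_{-1}(A)=\Phi_1(A)^*$ is more explicit than the paper's ``clearly''.

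One step, as written, does not stand: on the universal enveloping von Neumann algebra $W^*(\IT^2_\theta)$ the extended gauge action is \emph{not} pointwise ultraweakly continuous (in a representation where $V$ has an eigenvalue, pair the corresponding spectral projection of $V$, viewed in $W^*(\IT^2_\theta)$, against the vector state of an eigenvector: the orbit map is $1$ at $y=1$ and $0$ at nearby points), so you cannot define $\Phi_k$ by an ultraweak integral over $W^*(\IT^2_\theta)$, nor identify the fixed points of the extended action with the range of $\phi_1$, without further argument. The repair is exactly the device the paper already invokes for $\phi_1$: define $\Phi_k$ on $C^*(\IT^2_\theta)$ by the norm-convergent Bochner integral and extend normally by universality. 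Your identities $\Phi_j\Phi_k=\delta_{jk}\Phi_k$, $\alpha_{(1,y)}\circ\Phi_k=y^k\Phi_k$ and $\Phi_k(A)V^{-k}=\phi_1(AV^{-k})$ then hold on $W^*(\IT^2_\theta)$ because both sides are normal and agree on the ultraweakly dense subalgebra $C^*(\IT^2_\theta)$, and the last identity gives $\Phi_k(A)=f_k(U)V^k$ directly, with no appeal to a fixed-point-algebra computation. (Your identification of the range of $\phi_1$ with ``$L^\infty(\IT)$'' is at the same level of informality as the paper's own statement of the lemma, so I do not count that against you.) With this rerouting the proof is complete.
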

\begin{proof}
Let $A_\beta:=f^{(\beta)}_{_{-1}}(U)V^{-1}+f^{(\beta)}_{_{0}}(U)+f^{(\beta)}_{_{1}}(U)V$ be a convergent net in the ultraweak topology. Now $\phi_1(A_\beta)=f_{_{0}}^{(\beta)}(U),~\phi_1(A_\beta V)=f^{(\beta)}_{_{-1}}(U)$ and $\phi_1(A_\beta V^{-1})=f^{(\beta)}_{_{1}}(U)$ Since $\phi_1$ is a normal map, which implies that $f_{_{0}}^{(\beta)}(U),~f^{(\beta)}_{_{1}}(U)~and~f^{(\beta)}_{_{-1}}(U)$ (all of which are elements of $L^\infty(\IT)$) are ultraweakly convergent, to $f_0(U),f_1(U),f_{-1}(U)$ (say), and clearly $f_{-1}(t)=\overline{f_1(t+\theta)}.$ 
\end{proof}
\begin{lmma}\label{CHI}
Suppose $f_1(t)f_1(t+\theta)=0$ and $A\in\mathfrak{X}.$ Define $$A_{s,t}:=f_{-1}(e^{2\pi is}U)V^{-1}e^{-2\pi it}+f_0(e^{2\pi is}U)+f_1(e^{2\pi is}U)Ve^{2\pi it}.$$ Suppose $s,s^\prime\in[0,1)$ be such that $|s-s^\prime|\leq\frac{\epsilon}{4}$ where $0<\epsilon<\theta,$ and $|supp(f_1)|<\epsilon,$ where $|C|$ denotes the Lebesgue measure of a Borel subset $C\subseteq\IR.$ Then $A_{s,t}\cdot A_{s^\prime,t^\prime}\in\mathfrak{X}.$
\end{lmma}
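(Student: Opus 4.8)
The plan is to put $A_{s,t}\cdot A_{s',t'}$ into the three-band normal form $F_{-1}(U)V^{-1}+F_{0}(U)+F_{1}(U)V$ by hand, show that the two ``outer'' bands (the coefficients of $V^{\pm 2}$) vanish, and then check that the surviving coefficients satisfy the defining constraint of $\mathfrak{X}$. First I would record the Weyl relation in the convenient form $V^{m}g(U)=g(e^{-2\pi i m\theta}U)V^{m}$ for $g\in L^{\infty}(\IT)$, i.e.\ conjugation by $V^{m}$ rotates the symbol of an element of $L^{\infty}(\IT)$ by $-m\theta$. Writing $A_{s,t}=a_{-1}(U)V^{-1}+a_{0}(U)+a_{1}(U)V$ with $a_{j}(u)=e^{2\pi i j t}f_{j}(u+s)$, and analogously $b_{j}(u)=e^{2\pi i j t'}f_{j}(u+s')$ for $A_{s',t'}$, the product becomes the nine-term sum $\sum_{j,k\in\{-1,0,1\}}a_{j}(U)\,b_{k}(e^{-2\pi i j\theta}U)\,V^{j+k}$, which I would regroup according to the exponent $m=j+k\in\{-2,-1,0,1,2\}$.

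The heart of the matter is that the $V^{\pm 2}$ bands vanish. The coefficient of $V^{2}$ is produced only by the pair $(j,k)=(1,1)$ and, up to a unimodular scalar, equals the symbol $u\mapsto f_{1}(u+s)\,f_{1}(u+s'-\theta)$; the coefficient of $V^{-2}$ comes only from $(j,k)=(-1,-1)$ and, after substituting $f_{-1}(r)=\overline{f_{1}(r+\theta)}$, is the complex conjugate of a translate of the same kind of product. After an affine change of variable, both vanish once one knows that $f_{1}(w)\,f_{1}(w-\theta+\delta)=0$ for a.e.\ $w$, where $\delta:=s'-s$ and $|\delta|\le\epsilon/4$. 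To see this I would argue on the circle $\IT=\IR/\IZ$: the hypothesis $f_{1}(t)f_{1}(t+\theta)=0$ says that $S:=\mathrm{supp}(f_{1})$ and $S+\theta$ are essentially disjoint, and since in addition $S$ is carried by an arc of length $|S|<\epsilon<\theta$, the sets $S$ and $S+\theta$ are separated by an arc of length exceeding $\epsilon/4$ on either side, so translating $S+\theta$ by the small amount $\delta$ cannot reintroduce overlap; hence $S\cap(S+\theta-\delta)$ is Lebesgue-null, which is exactly the assertion. This is the step where the strict inequality $\epsilon<\theta$ and the $\epsilon/4$ slack are spent, and where one must watch the wrap-around on $\IT$ carefully; I expect it to be the only genuine obstacle.

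With the outer bands gone, the product is of the form $F_{-1}(U)V^{-1}+F_{0}(U)+F_{1}(U)V$ with $F_{-1},F_{0},F_{1}\in L^{\infty}(\IT)$ (each is a finite sum of products of bounded translates of $f_{0},f_{1}$, so boundedness is automatic). It then remains to verify the constraint $F_{-1}(t)=\overline{F_{1}(t+\theta)}$ defining $\mathfrak{X}$: I would write $F_{\pm 1}$ out explicitly from the regrouped sum and, using $f_{-1}(r)=\overline{f_{1}(r+\theta)}$ repeatedly, reduce the difference $F_{-1}(t)-\overline{F_{1}(t+\theta)}$ to a sum of terms each of which carries a factor $f_{1}(\,\cdot\,)$ evaluated at a point which the support/separation estimate of the previous paragraph forces outside $S$, hence $0$; so $F_{-1}=\overline{F_{1}(\,\cdot\,+\theta)}$. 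Finally the product is a bounded operator in $W^{*}(\IT^{2}_{\theta})$ with the three-band symbol just computed, so it lies in $\mathfrak{X}$; alternatively one may invoke the ultraweak closedness of $\mathfrak{X}$ from Lemma \ref{closure-WOT}. The bookkeeping in this last paragraph is routine, but it is the second place where the precise hypothesis $f_{1}(t)f_{1}(t+\theta)=0$ (and not merely smallness of $\mathrm{supp}\,f_{1}$) is used.
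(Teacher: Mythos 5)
Your treatment of the outer bands is exactly the paper's proof: the paper's argument consists of computing the $V^2$ coefficient $f_1(s+l)f_1(s'+l-\theta)e^{2\pi i(t+t')}$ and killing it by the support hypotheses, and your normal-form computation is the same (two small remarks: the $V^{-2}$ band corresponds to the shift $\theta+\delta$ rather than $\theta-\delta$, so it is disposed of by the same argument with $\delta$ replaced by $-\delta$, not literally by the single identity $f_1(w)f_1(w-\theta+\delta)=0$ you state; and your intermediate claim that $S$ and $S+\theta$ are separated by arcs of length exceeding $\epsilon/4$ does not follow from $|S|<\epsilon<\theta$ alone, since $|\mathrm{supp}f_1|<\epsilon$ bounds the measure, not the diameter, and even for an arc, $S=[0,0.9\epsilon]$ with $\theta=1.1\epsilon$ leaves a gap of only $0.2\epsilon$, where the conclusion actually fails. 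The paper's own one-line justification is equally loose, and in the intended application $S\subseteq[\theta,\theta+\epsilon]$ with $\theta=2\epsilon$ everything is fine, so this is a shared imprecision rather than a defect of your route.)

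The genuine problem is your final step, the verification of $F_{-1}(t)=\overline{F_1(t+\theta)}$ for the product. Writing $a_j,b_j$ for the band symbols of $A_{s,t}$ and $A_{s',t'}$, one finds $F_{-1}(u)-\overline{F_1(u+\theta)}=b_{-1}(u)\left(a_0(u)-\overline{a_0(u+\theta)}\right)+a_{-1}(u)\left(b_0(u+\theta)-\overline{b_0(u)}\right)$, so the obstruction sits in $f_0$, not in $f_1$, and it is not removed by any support consideration. Indeed the relation is simply false for the product in general: already for the paper's own projection $P$ (with its real ramp function $f_0$), taking $s=s'$ and $t\neq t'$ the difference equals $f_{-1}(u+s)\,(e^{-2\pi it'}-e^{-2\pi it})\,(f_0(u+s)-f_0(u+s+\theta))$, which is nonzero, e.g.\ near $u+s=\epsilon/4$ where $f_0(\epsilon/4)=\tfrac14$ and $f_0(\theta+\epsilon/4)=\tfrac34$. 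So that part of your plan cannot be carried out; the lemma holds (and is later used) only in the sense that the product has no components outside the bands $V^{-1},1,V$, which is precisely what the paper's proof establishes --- the defining relation of $\mathfrak{X}$ is recovered only for the final infimum, which is self-adjoint. Your instinct that the displayed definition of $\mathfrak{X}$ demands more than the vanishing of the $V^{\pm2}$ coefficients was reasonable, but the verification you propose is incorrect, and no correct one exists for a general non-self-adjoint product.
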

\begin{proof}
It suffices to show that the coefficient of $V^2$ in $A_{s,t}\cdot A_{s^\prime,t^\prime}$ is zero. By a direct computation, the coefficient of $V^2$ is $g(l):=f_1(s+l)f_1(s^\prime+l-\theta)e^{2\pi i(t+t^\prime)}.$  But $|(s+l)-(s^\prime+l-\theta)|=|\theta+s-s^\prime|>\epsilon.$ Now by hypothesis, we have $|supp(f_1)|<\epsilon,$ so that $f_1(s+l)\cdot f_1(s^\prime+l-\theta)=0$ and hence the lemma is proved.
\end{proof}
\begin{lmma}\label{V^2=0}
Suppose $A=f_{-1}(U)V^{-1}+f_0(U)+f_1(U)V$ and $f_1(l)f_1(l+\theta)=0,$ for $l\in[0,1).$ Then $A^{2n}\in\mathfrak{X},$ for $n\in\IN.$
\end{lmma}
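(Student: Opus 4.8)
The plan is to exploit the fact that $A$ is supported on only the three central "$V$-diagonals" and that the hypothesis $f_1(l)f_1(l+\theta)=0$ is exactly a strict-triangularity condition on the off-diagonal part, so that no $V^{\pm 2}$ (or higher) terms can be produced even after multiplying $2n$ copies of $A$.

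First I would rewrite $A$ more symmetrically. Put $X:=f_1(U)V$. Using $VU^{m}=e^{-2\pi i m\theta}U^{m}V$ one gets $(f_1(U)V)^{*}=V^{-1}\overline{f_1}(U)=\overline{f_1}(e^{2\pi i\theta}U)V^{-1}$, whose coefficient (as a function on $\IT$) is $\overline{f_1(t+\theta)}$; since membership in $\mathfrak X$ means precisely $f_{-1}(t)=\overline{f_1(t+\theta)}$, this says $f_{-1}(U)V^{-1}=X^{*}$, i.e.
\[
A=X+f_0(U)+X^{*}.
\]
Next comes the key structural identity: for every $g\in L^{\infty}(\IT)$ and every $j\ge 0$,
\[
X\,g(U)^{j}\,X=0 \qquad\text{and}\qquad X^{*}\,g(U)^{j}\,X^{*}=0 .
\]
For the first, $X g(U)^{j}X=f_1(U)\,g(e^{-2\pi i\theta}U)^{j}\,f_1(e^{-2\pi i\theta}U)\,V^{2}$, whose coefficient at $V^{2}$ is $f_1(t)\,g(t-\theta)^{j}\,f_1(t-\theta)$, and this equals $g(t-\theta)^{j}$ times $f_1(t)f_1(t-\theta)=0$ (the hypothesis re-indexed by $l=t-\theta$); the statement for $X^{*}$ is the adjoint computation. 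Conceptually: if $S$ is the essential support of $f_1$ and $Q:=\chi_{S}(U)$, then $QX=X$ while $XQ=f_1(U)\chi_{S+\theta}(U)V=0$ (again the hypothesis, now as $S\cap(S+\theta)=\emptyset$), so $X=QXQ^{\perp}$ and $X^{*}=Q^{\perp}X^{*}Q$ are strictly triangular with respect to $H=QH\oplus Q^{\perp}H$, while $f_0(U)$ is block-diagonal; the displayed identities are then immediate.

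Now I would expand $A^{2n}=(X+f_0(U)+X^{*})^{2n}$ into monomials in the three letters $X$, $f_0(U)$, $X^{*}$. The identities above show that any monomial containing two $X$'s (or two $X^{*}$'s) with only $f_0(U)$-letters between them vanishes; hence in each surviving monomial the letters $X$ and $X^{*}$ occur strictly alternately, so the $V$-degree $\#X-\#X^{*}$ of each surviving monomial lies in $\{-1,0,1\}$. Therefore $A^{2n}=g_{-1}(U)V^{-1}+g_0(U)+g_1(U)V$ with $g_{-1},g_0,g_1\in L^{\infty}(\IT)$ — no term of $V$-degree $\ge 2$ in absolute value survives. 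Finally, since the elements of $\mathfrak X$ in play are self-adjoint ($f_0$ real-valued), $A^{2n}=(A^{2n})^{*}$, and comparing the $V^{\pm 1}$-parts of $A^{2n}$ and $(A^{2n})^{*}$ forces $g_{-1}(t)=\overline{g_1(t+\theta)}$ (and $g_0$ real). Hence $A^{2n}\in\mathfrak X$.

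The only genuine obstacle is the degree bound: a priori a product of $2n$ elements of $\mathfrak X$ spreads over $V^{-2n},\dots,V^{2n}$, and it is precisely the support condition — packaged as $Xg(U)^{j}X=0$, equivalently the one-sided relation $XQ=0$ — that collapses everything back onto the three central diagonals. Once this combinatorial point (alternation of $X$ and $X^{*}$ in surviving monomials) is in hand, the remaining verification of the coefficient relation $g_{-1}(t)=\overline{g_1(t+\theta)}$ is routine from self-adjointness.
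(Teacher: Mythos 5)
Your degree-bound argument is correct, and it is a genuinely different route from the paper's. The paper proceeds by iterated squaring: it checks that the $V^{\pm2}$-coefficient of $A^2$ is $f_1(l)f_1(l\pm\theta)=0$, identifies the new $V$-coefficient $f_1^{(2)}=f_1\,(f_0+\tau_\theta f_0)$, observes that $f_1^{(2)}(l)f_1^{(2)}(l+\theta)=0$ again, and repeats. Your packaging of the hypothesis as strict triangularity, $X=QXQ^{\perp}$ with $Q=\chi_S(U)$, so that $Xg(U)^jX=0=X^*g(U)^jX^*$ and hence the off-diagonal letters must alternate in any surviving word, is cleaner: it disposes of \emph{every} power $A^m$ in one stroke (note that the paper's iteration, read literally, only produces the powers $A^{2^k}$, which happens to suffice for the strong-limit argument in Lemma \ref{projection in X}), and it explains structurally why nothing of $V$-degree $\pm2$ or beyond can appear, rather than recomputing coefficients at each stage.

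The last step, however, is a genuine gap. You deduce $g_{-1}(t)=\overline{g_1(t+\theta)}$ from $A^{2n}=(A^{2n})^*$, justified by the claim that ``the elements of $\mathfrak X$ in play are self-adjoint ($f_0$ real-valued).'' Neither the lemma nor the definition of $\mathfrak X$ assumes $f_0$ real: membership only ties $f_{-1}$ to $f_1$. Worse, at the one place the lemma is invoked (Lemma \ref{projection in X}) it is applied to $A_{s,t}(P)\cdot A_{s',t'}(P)$, a product of two \emph{distinct} projections, which is not self-adjoint and whose degree-zero coefficient is not real in general, so self-adjointness cannot be invoked there. Nor is the relation automatic: already for $A^2$ a direct computation gives $g_{-1}=f_{-1}\,(f_0+f_0(\cdot+\theta))$ while $\overline{g_1(\cdot+\theta)}=f_{-1}\,\overline{(f_0+f_0(\cdot+\theta))}$, so the relation holds precisely when $f_{-1}\cdot\mathrm{Im}\bigl(f_0+f_0(\cdot+\theta)\bigr)=0$, not as a formal consequence of the hypotheses. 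The paper's own proof quietly sidesteps this point: it only ever verifies the vanishing of the $V^{\pm2}$ coefficients, i.e.\ the tri-diagonal form, which (together with the ultraweak closedness of Lemma \ref{closure-WOT}) is what the subsequent limit argument actually uses, the final infimum being a projection and hence self-adjoint in any case. So you should either restrict your concluding paragraph to self-adjoint $A$ (which does not cover the intended application), or drop the self-adjointness claim and record that what your argument establishes is the tri-diagonal form of $A^{2n}$; as a proof that $A^{2n}$ satisfies the coefficient relation built into the literal definition of $\mathfrak X$, the step as written does not stand.
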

\begin{proof}
The coefficient of $V^2$ in $A^2$ is $f_1(l)f_1(l+\theta)$ for $l\in[0,1)$ and this is zero by the hypoethesis. Hence $A^2\in\mathfrak{X}.$ The coefficient of $V$ in $A^2$ is $f_1^{(2)}(l):=f_1\left(f_0+\tau_{_\theta}(f_{0})\right),$ where $\tau_{_{\theta}}$ is left translation by $\theta.$ We have $f_1^{(2)}(l)f_1^{(2)}(l+\theta)=0,$ so that applying the same argument as before, we conclude that $A^4\in\mathfrak{X}.$ Proceeding like this we get the required result.
\end{proof}

\begin{lmma}\label{projection in X}
Suppose $P=f_{-1}(U)V^{-1}+f_0(U)+f_1(U)V,$ such that $P^2=P$ and $|supp(f_1)|<\epsilon.$ Then $\left(A_{_{s,t}}(P)\right)\bigwedge\left(A_{_{s^\prime,t^\prime}}(P)\right)\in\mathfrak{X}$ for $|s-s^\prime|<\frac{\epsilon}{4}.$
\end{lmma}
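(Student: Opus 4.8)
The plan is to exhibit the meet $A_{s,t}(P)\wedge A_{s',t'}(P)$ as an ultraweak limit of elements of $\mathfrak{X}$ and then to invoke the closedness of $\mathfrak{X}$ from Lemma~\ref{closure-WOT}. Write $Q_{1}:=A_{s,t}(P)$ and $Q_{2}:=A_{s',t'}(P)$. Since $A_{s,t}(P)=\alpha_{(e^{2\pi i s},e^{2\pi i t})}(P)$ and each $\alpha_{(x,y)}$ is a normal $\ast$-automorphism of $W^{*}(\IT^{2}_{\theta})$, the operators $Q_{1},Q_{2}$ are projections, and both lie in $\mathfrak{X}$: explicitly $Q_{1}=\tilde f_{-1}(U)V^{-1}+\tilde f_{0}(U)+\tilde f_{1}(U)V$ with $\tilde f_{1}(l)=e^{2\pi i t}f_{1}(s+l)$, and similarly $Q_{2}=\tilde g_{-1}(U)V^{-1}+\tilde g_{0}(U)+\tilde g_{1}(U)V$ with $\tilde g_{1}(l)=e^{2\pi i t'}f_{1}(s'+l)$. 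Comparing the coefficients of $V^{2}$ in the identity $P^{2}=P$ forces $f_{1}(l)f_{1}(l+\theta)=0$ for a.e.\ $l$, hence also $\tilde f_{1}(l)\tilde f_{1}(l+\theta)=0$ and $\tilde g_{1}(l)\tilde g_{1}(l+\theta)=0$ a.e.

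Next I would put $C:=\frac{1}{2}(Q_{1}+Q_{2})$. Since $\mathfrak{X}$ is closed under addition and under multiplication by real scalars, $C\in\mathfrak{X}$; writing $C=c_{-1}(U)V^{-1}+c_{0}(U)+c_{1}(U)V$ we have $c_{1}=\frac{1}{2}(\tilde f_{1}+\tilde g_{1})$. The key observation is that $c_{1}(l)c_{1}(l+\theta)=0$ for a.e.\ $l$: the two ``diagonal'' products vanish by the previous paragraph, while the cross products $\tilde f_{1}(l)\tilde g_{1}(l+\theta)$ and $\tilde g_{1}(l)\tilde f_{1}(l+\theta)$ vanish by exactly the support disjointness estimate of the proof of Lemma~\ref{CHI}, since $\mathrm{supp}(\tilde f_{1})$ and $\mathrm{supp}(\tilde g_{1})$ are translates of $\mathrm{supp}(f_{1})$ of length $<\epsilon$ at mutual distance $<\epsilon/4$, so the further shift by $\theta>\epsilon$ pulls $\mathrm{supp}(\tilde f_{1})$ off $\mathrm{supp}(\tilde g_{1})-\theta$. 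Thus $C$ is an element of $\mathfrak{X}$ of the precise shape to which Lemma~\ref{V^2=0} applies, and therefore $C^{2n}\in\mathfrak{X}$ for every $n$.

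Finally I would run a spectral argument. $C$ is a positive contraction, so by bounded convergence in the spectral calculus $C^{m}$ converges in the strong operator topology to the spectral projection $E$ of $C$ at the point $1$. Moreover $C\xi=\xi$ is equivalent to $\langle C\xi,\xi\rangle=\|\xi\|^{2}$, i.e.\ to $\|Q_{1}\xi\|=\|Q_{2}\xi\|=\|\xi\|$, i.e.\ to $Q_{1}\xi=\xi=Q_{2}\xi$; hence $E$ is the projection onto $\mathrm{ran}(Q_{1})\cap\mathrm{ran}(Q_{2})$, that is $E=Q_{1}\wedge Q_{2}$. In particular the even powers $C^{2n}$, which still converge to $E$, tend to $Q_{1}\wedge Q_{2}$ strongly; being norm-bounded they also converge ultraweakly, and as every $C^{2n}$ lies in $\mathfrak{X}$ and $\mathfrak{X}$ is ultraweakly closed (Lemma~\ref{closure-WOT}), we conclude $Q_{1}\wedge Q_{2}\in\mathfrak{X}$.

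I expect the one genuinely delicate step to be the verification that $C$ satisfies the hypothesis $c_{1}(l)c_{1}(l+\theta)=0$ of Lemma~\ref{V^2=0}: this is precisely where all three quantitative assumptions ($|\mathrm{supp}(f_{1})|<\epsilon$, $|s-s'|<\epsilon/4$, $\epsilon<\theta$) are used, together with the idempotent relation $f_{1}(l)f_{1}(l+\theta)=0$ coming from $P^{2}=P$, and it amounts to repackaging the disjointness bookkeeping already done in Lemmas~\ref{CHI} and~\ref{V^2=0}. The remaining ingredients --- restricting to even powers so as to stay inside $\mathfrak{X}$, the spectral identification of $Q_{1}\wedge Q_{2}$, and the ultraweak closedness of $\mathfrak{X}$ --- are routine.
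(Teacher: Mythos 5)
Your argument is correct and reaches the conclusion by a genuinely different route than the paper. The paper starts from von Neumann's alternating-product formula $Q_1\wedge Q_2=\mathrm{SOT}\mbox{-}\lim_n(Q_1Q_2)^n$ with $Q_1=A_{s,t}(P)$, $Q_2=A_{s',t'}(P)$, and so must first invoke Lemma \ref{CHI} to show that the product $Q_1Q_2$ lies in $\mathfrak{X}$, then compute the $V$-coefficient $f_1^{(2)}$ of that product and check $f_1^{(2)}(l)f_1^{(2)}(l+\theta)=0$ before applying Lemma \ref{V^2=0} and the closedness of $\mathfrak{X}$ (Lemma \ref{closure-WOT}). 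You instead use the averaging formula $Q_1\wedge Q_2=\mathrm{SOT}\mbox{-}\lim_n C^n$ with $C=\frac12(Q_1+Q_2)$, which you justify by a short and correct spectral argument ($C^m\to\chi_{\{1\}}(C)$ strongly, and $\ker(1-C)=\mathrm{ran}(Q_1)\cap\mathrm{ran}(Q_2)$ because $\langle Q_i\xi,\xi\rangle\leq\|\xi\|^2$). This buys you that the approximants stay in $\mathfrak{X}$ essentially for free: $\mathfrak{X}$ is a real-linear subspace, so $C\in\mathfrak{X}$ without the product computation of Lemma \ref{CHI}, and the only quantitative input is the vanishing of the cross terms $\tilde f_1(l)\tilde g_1(l+\theta)$, which rests on exactly the same support-disjointness estimate (from $|\mathrm{supp}(f_1)|<\epsilon$, $|s-s'|<\epsilon/4$, $\epsilon<\theta$) that the paper uses inside Lemma \ref{CHI}, together with $f_1(l)f_1(l+\theta)=0$ extracted from $P^2=P$ --- a point the paper uses only implicitly when it appeals to Lemma \ref{CHI}, and which you rightly make explicit. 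What the paper's route buys in exchange is that von Neumann's formula can be quoted without proof, whereas you must supply the spectral identification of the meet; also note that both routes inherit the same slight imprecision from Lemma \ref{CHI}, namely treating a translate by more than $\epsilon$ of a set of measure less than $\epsilon$ as disjoint from it (harmless for the interval-supported $f_1$ used later, but not a consequence of the measure bound alone). Your final steps --- restricting to the even powers $C^{2n}$ so Lemma \ref{V^2=0} applies (harmless, since every subsequence of $C^m$ has the same strong limit), and passing from strong to ultraweak convergence on a norm-bounded sequence before using Lemma \ref{closure-WOT} --- are all in order, and are if anything handled more carefully than in the paper.
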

\begin{proof}
We start with the following well-known formula due to von-Neumann:

$$P\wedge Q=SOT-\lim_{n\rightarrow\infty}(P\cdot Q)^n,$$ where $P,Q$ are projections and $P\bigwedge Q$ denotes the projection onto $R(P)\cap R(Q).$

Thus in particular:
$$A_{_{s,t}}(P)\bigwedge A_{_{s^\prime,t^\prime}}(P)=SOT-\lim_{n\rightarrow\infty}\{A_{_{s,t}}(P)\cdot A_{_{s^\prime,t^\prime}}(P)\}^n.$$ Now by the hypothesis, $|s-s^\prime|<\frac{\epsilon}{4}$ and $|supp(f_1)|<\epsilon.$ It follows by Lemma \ref{CHI}, that\\
$A_{_{s,t}}(P)\cdot A_{_{s^\prime,t^\prime}}(P)\in\mathfrak{X}.$ The coefficient of $V$ in 
$A_{_{s,t}}(P)\cdot A_{_{s^\prime,t^\prime}}(P)$ is 
$$f_1^{(2)}(l):=\{f_1(s+l)f_0(s^\prime+t-\theta)e^{2\pi it}+f_0(s+l)f_1(s^\prime+t)e^{2\pi it^\prime}\}.$$ One may check that $f_1^{(2)}(l)f_1^{(2)}(l+\theta)=0$ for $|s-s^\prime|<\frac{\epsilon}{4}.$ Thus by Lemma \ref{V^2=0},\\ $\{A_{_{s,t}}(P)\cdot A_{_{s^\prime,t^\prime}}(P)\}^{2n}\in\mathfrak{X}$ for $n\geq1.$ Now by Lemma \ref{closure-WOT}, the subspace $\mathfrak{X}$ is closed in the SOT topology. Thus $$SOT-\lim_{n\rightarrow\infty}\{A_{_{s,t}}(P)\cdot A_{_{s^\prime,t^\prime}}(P)\}^{2n}\in\mathfrak{X};$$ i.e. 
$\left(A_{_{s,t}}(P)\right)\bigwedge\left(A_{_{s^\prime,t^\prime}}(P)\right)\in\mathfrak{X}.$
\end{proof}
\begin{lmma}\label{solve for projection}
Let $P=f_{-1}(U)V^{-1}+f_0(U)+f_1(U)V$ and $A=f_{_{-1}}^{(A)}(U)V^{-1}+f_{_0}^{(A)}(U)+f_{_1}^{(A)}(U)V$ be projections, $(f_{-1},f_0,f_1)$ and $(f_{_{-1}}^{(A)},f_{_0}^{(A)},f_{_{1}}^{(A)})$ satisfying the conditions given in [\cite{davidson},page 173]. Then $A\leq A_{_{s,t}}(P)$ and $A\leq A_{_{s^\prime,t^\prime}}(P)$ if and only if the following hold:
\begin{itemize}
\item $f_1(s+l)f_{_1}^{(A)}(l-\theta)=0;$
\item $f_{-1}(s+l)f_{_{-1}}^{(A)}(l+\theta)=0;$
\item $f_0(s+l)f_{_0}^{(A)}(l)+f_1(s+l)f_{_{-1}}^{(A)}(l-\theta)e^{2\pi it}+f_{-1}(s+l)f_{_{1}}^{(A)}(l+\theta)e^{-2\pi it}=f_{_0}^{(A)}(l);$
\item $f_1(s+l)f_{_{0}}^{(A)}(l-\theta)e^{2\pi it}+f_0(s+l)f_{_{1}}^{(A)}(l)=f_{_{1}}^{(A)}(l);$
\item $f_{-1}(s+l)f_{_{0}}^{(A)}(l+\theta)e^{-2\pi i t}+f_0(s+l)f_{_{-1}}^{(A)}(l)=f_{_{-1}}^{(A)}(l);$
\item $f_1(s^\prime+l)f_{_1}^{(A)}(l-\theta)=0;$
\item $f_{-1}(s^\prime+l)f_{_{-1}}^{(A)}(l+\theta)=0;$
\item $f_0(s^\prime+l)f_{_0}^{(A)}(l)+f_1(s^\prime+l)f_{_{-1}}^{(A)}(l-\theta)e^{2\pi it^\prime}+f_{-1}(s^\prime+l)f_{_{1}}^{(A)}(l+\theta)e^{-2\pi it^\prime}=f_{_0}^{(A)}(l);$
\item $f_1(s^\prime+l)f_{_{0}}^{(A)}(l-\theta)e^{2\pi it^\prime}+f_0(s^\prime+l)f_{_{1}}^{(A)}(l)=f_{_{1}}^{(A)}(l);$
\item $f_{-1}(s^\prime+l)f_{_{0}}^{(A)}(l+\theta)e^{-2\pi i t^\prime}+f_0(s^\prime+l)f_{_{-1}}^{(A)}(l)=f_{_{-1}}^{(A)}(l);$
\end{itemize}
for $l\in[0,1).$
\end{lmma}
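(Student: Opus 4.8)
The plan is to reduce both operator inequalities $A\le A_{s,t}(P)$, $A\le A_{s',t'}(P)$ to operator \emph{equalities} of the form $BA=A$, and then to expand the relevant product in powers of $V$ and read off, coefficient by coefficient, a system of scalar a.e.\ identities of $L^\infty(\IT)$-functions, which will turn out to be exactly the ten displayed relations.

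First I would record the elementary fact that for projections $A,B$ in a von Neumann algebra, $A\le B\iff BA=A\iff AB=A$ (the range of $A$ sits inside that of $B$). Next, since $\alpha$ acts by $\ast$-automorphisms, $A_{s,t}(P)=\alpha_{(e^{2\pi is},e^{2\pi it})}(P)$ is again a projection of the special form $g_{-1}(U)V^{-1}+g_0(U)+g_1(U)V$, with coefficient functions $g_1(l)=f_1(s+l)e^{2\pi it}$, $g_0(l)=f_0(s+l)$, $g_{-1}(l)=f_{-1}(s+l)e^{-2\pi it}$, and similarly for $A_{s',t'}(P)$. Hence the pair of inequalities in the statement is equivalent to $A_{s,t}(P)\cdot A=A$ together with $A_{s',t'}(P)\cdot A=A$.

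The heart of the argument is then a direct expansion of $A_{s,t}(P)\cdot A$. Using the defining relation $UV=e^{2\pi i\theta}VU$, which gives the commutation rule $g(U)V=V\,g(e^{2\pi i\theta}U)$, equivalently $V^{-1}g(U)V$ corresponds to the function $l\mapsto g(l+\theta)$ and $Vg(U)V^{-1}$ to $l\mapsto g(l-\theta)$, one obtains
$$A_{s,t}(P)\cdot A=\sum_{j=-2}^{2}c_j(U)V^j,$$
where each $c_j$ is an explicit sum of products of $g_{\pm1},g_0$ with the $\theta$-translates of $f^{(A)}_{\pm1},f^{(A)}_0$. Since $A=f^{(A)}_{-1}(U)V^{-1}+f^{(A)}_0(U)+f^{(A)}_1(U)V$ lies in $\mathfrak{X}$ and so has no $V^{\pm2}$-component, the equation $A_{s,t}(P)\cdot A=A$ forces $c_{\pm2}\equiv0$ and $c_j=f^{(A)}_j$ for $j=-1,0,1$. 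To pass from these operator identities to pointwise (a.e.) identities of the coefficient functions I would apply the conditional expectation $\phi_1$ just as in the proof of Lemma \ref{closure-WOT}: $c_0(U)=\phi_1(A_{s,t}(P)A)$, $c_{-1}(U)=\phi_1(A_{s,t}(P)AV)$, $c_1(U)=\phi_1(A_{s,t}(P)AV^{-1})$, and then use that $f\mapsto f(U)$ is injective on $L^\infty(\IT)$. A routine bookkeeping check identifies $c_{2}\equiv0$, $c_{-2}\equiv0$, $c_0=f_0^{(A)}$, $c_1=f_1^{(A)}$, $c_{-1}=f_{-1}^{(A)}$ with the first five displayed conditions respectively; repeating the identical computation with $(s',t')$ in place of $(s,t)$ yields the remaining five. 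For the converse one simply reads this computation backwards: if all ten scalar identities hold for a.e.\ $l$, then $A_{s,t}(P)A=A$ and $A_{s',t'}(P)A=A$ as operators, whence $A$ is dominated by both projections.

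The only genuinely delicate aspect is the careful (but elementary) bookkeeping inside the expansion: keeping track of the correct direction of the $\theta$-shifts when moving $V^{\pm1}$ past the $f^{(A)}$-factors, and of the phases $e^{\pm2\pi it}$, $e^{\pm2\pi it'}$ carried by $A_{s,t}(P)$ and $A_{s',t'}(P)$. Everything else is either standard (the characterization $A\le B\iff BA=A$ for projections, the fact that $A_{s,t}(P)$ is again a projection of the same form) or already available from the proof of Lemma \ref{closure-WOT} (extraction of the three coefficient functions via $\phi_1$).
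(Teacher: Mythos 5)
Your proposal is correct and follows essentially the same route as the paper, which simply compares coefficients in the equations $A_{s,t}(P)A=A$ and $A_{s',t'}(P)A=A$; your expansion, phase bookkeeping and use of $\phi_1$ to extract the coefficient functions are just a more detailed write-up of that computation. You also make explicit a point the paper's one-line proof glosses over, namely that the first two bullets in each group arise from the vanishing of the $V^{\pm 2}$ coefficients of the product.
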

\begin{proof}
It follows by comparing the coefficients of $V^{-1},V$ and $1$ from the equations
\begin{equation*}
A_{_{s,t}}(P)A=A;~~A_{_{s^\prime,t^\prime}}(P)A=A.
\end{equation*}
\end{proof}
\begin{lmma}\label{B<A}
For two projections $A$ and $B$ such that $$A=f^{(A)}_{_{-1}}(U)V^{-1}+f^{(A)}_{_{0}}(U)+f^{(A)}_{_{1}}(U)V,$$
$$B=f^{(B)}_{_{-1}}(U)V^{-1}+f^{(B)}_{_{0}}(U)+f^{(B)}_{_{1}}(U)V;$$ we have $A\leq B$ if and only if 
\begin{itemize}
\item $\fonedown{(B)}(l)\fonedown{(A)}(l-\theta)=0;$
\item $\fonedown{(B)}(l+\theta)\fonedown{(A)}(l+2\theta)=0;$
\item $\fzerodown{(B)}(l)\fzerodown{A}(l)+\fonedown{(B)}(l)\fzerodown{(A)}(l)+\fonedown{(B)}(l+\theta)\fonedown{(A)}(l+\theta)=\fzerodown{(A)}(l);$
\item $\fonedown{(B)}(l)\fzerodown{(A)}(l-\theta)+\fzerodown{(B)}(l)\fonedown{(A)}(l)=\fonedown{(A)}(l);$
\item $\fonedown{(B)}(l+\theta)\fzerodown{(A)}(l+\theta)+\fzerodown{(B)}(l)\fonedown{(A)}(l+\theta)=\fonedown{(A)}(l+\theta);$
\end{itemize}
for $l\in[0,1).$
\end{lmma}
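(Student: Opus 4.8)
The plan is to convert the order relation $A \le B$ into a single algebraic identity and then compare the coefficients of the powers of $V$. Recall that, for self-adjoint projections in a von Neumann algebra, $A \le B$ is equivalent to $\mathrm{Ran}(A) \subseteq \mathrm{Ran}(B)$, and hence to $BA = A$ (and then, by taking adjoints, also to $AB = A$); so everything reduces to rewriting the identity $BA = A$. Observe also that self-adjointness of $A$ and $B$ forces $f^{(A)}_{0}, f^{(B)}_{0}$ to be real-valued and $f^{(A)}_{-1}(t) = \overline{f^{(A)}_{1}(t+\theta)}$ (and likewise for $B$); in other words $A, B \in \mathfrak{X}$, and these two constraints are what will be used to bring the final set of equations into the stated normalized form.

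First I would expand the product $BA$. The Weyl relation $UV = e^{2\pi i\theta}VU$ gives, for every bounded Borel function $g$, the identity $V g(U) V^{*} = g(e^{-2\pi i\theta}U)$, so that $V^{n} g(U) = g(e^{-2\pi i n\theta}U)\,V^{n}$; in the shorthand of the lemma (where $g(l)$ stands for $g(e^{2\pi i l})$ and a shift $l \mapsto l+c$ corresponds to multiplying the argument by $e^{2\pi i c}$) this simply says that pushing a $V^{n}$ to the right of $g(U)$ replaces $g(l)$ by $g(l - n\theta)$. Writing $A = \sum_{k=-1}^{1} f^{(A)}_{k}(U)V^{k}$ and $B = \sum_{m=-1}^{1} f^{(B)}_{m}(U)V^{m}$, this yields $BA = \sum_{j=-2}^{2} c_{j}(U)\,V^{j}$, where $c_{j}(l) = \sum_{m+k=j,\; m,k\in\{-1,0,1\}} f^{(B)}_{m}(l)\, f^{(A)}_{k}(l - m\theta)$.

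Next, I would read off the scalar identities. Such a Laurent-in-$V$ expansion has unique coefficients: applying the conditional expectation $\phi_{1}(X) = \int_{0}^{1}\alpha_{_{(1,e^{2\pi it})}}(X)\,dt$ (which annihilates every $V^{n}$ with $n\ne 0$) to $X\,V^{-j}$ returns the $V^{j}$-coefficient of $X$, exactly as in the proof of Lemma \ref{closure-WOT}. Hence $BA = A$ holds if and only if the five identities $c_{2}=0$, $c_{-2}=0$, $c_{1}=f^{(A)}_{1}$, $c_{0}=f^{(A)}_{0}$, $c_{-1}=f^{(A)}_{-1}$ hold as elements of $L^{\infty}(\IT)$, i.e. for a.e. $l$ (and for all $l\in[0,1)$ after passing to the natural representatives). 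Expanding each $c_{j}$ from the formula above: vanishing of $c_{2}$ is the first displayed identity directly; vanishing of $c_{-2}$ becomes the second after substituting $f^{(A)}_{-1}(t) = \overline{f^{(A)}_{1}(t+\theta)}$ (and the same for $B$) and conjugating; $c_{1} = f^{(A)}_{1}$ is literally the fourth identity; $c_{0} = f^{(A)}_{0}$ is the third; and $c_{-1} = f^{(A)}_{-1}$ becomes the fifth once the $V^{-1}$-coefficients are re-expressed through $f_{1}$ by the $\mathfrak{X}$-constraint. Since each step is an equivalence, the converse direction is obtained by running the computation backwards.

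There is no conceptual obstacle here --- the statement is a bookkeeping lemma. The one point that needs care, and that I would write out most carefully, is tracking the translations $l \mapsto l - m\theta$ produced by commuting the $V$'s past functions of $U$, and then using $f^{(A)}_{-1}(t) = \overline{f^{(A)}_{1}(t+\theta)}$ together with the reality of $f_{0}$ to normalize the $V^{0}$- and $V^{-1}$-equations into the displayed form. It is also worth flagging at the outset that $BA$ has a priori nonzero $V^{\pm 2}$ components, so two of the five conditions are precisely the requirement that these extra bands vanish, which is what produces the "support-separation" identities in the first two bullets.
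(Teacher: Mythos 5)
Your proposal is correct and takes essentially the same route as the paper, whose entire proof is ``compare coefficients in $BA=A$''; you merely write out the Weyl-relation bookkeeping, the extraction of coefficients via $\phi_1$, and the normalization using $f_{-1}(l)=\overline{f_1(l+\theta)}$ and the reality of $f_0$ explicitly. If anything, your version is slightly more complete, since the vanishing of the $V^{\pm2}$ coefficients (which yields the first two bullets) is part of the argument, while the paper's one-line proof mentions only the $V$, $V^{-1}$ and $1$ components.
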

\begin{proof}
It follows by comparing the coefficients of $V,V^{-1},1$ in the equation $BA=A.$
\end{proof}
\begin{lmma}\label{000}
Let $P=f_{-1}(U)V^{-1}+f_0(U)+f_1(U)V$ such that $P$ is a projection and suppose $f_0(t)=0$ for some $t.$ Then $f_1(t)=f_1(t+\theta)=0.$
\end{lmma}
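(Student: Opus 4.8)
The plan is to use the projection property $P^2 = P$ and extract the relevant coefficient relations. Writing $P = f_{-1}(U)V^{-1} + f_0(U) + f_1(U)V$ with $f_{-1}(t) = \overline{f_1(t+\theta)}$, I would compute $P^2$ by multiplying out the nine terms, keeping in mind the commutation relation $U V = e^{2\pi i\theta}VU$, which translates into $g(U)V = V g(e^{-2\pi i\theta}U)$ (equivalently, conjugation by $V$ shifts the argument of a function of $U$ by $\theta$). Collecting the coefficient of $V^0$ (i.e.\ the part lying in the copy of $L^\infty(\IT)$ generated by $U$), the equation $P^2 = P$ forces, at the level of functions on $[0,1)$,
\begin{equation*}
f_0(l)^2 + f_1(l)f_{-1}(l+\theta) + f_{-1}(l)f_1(l-\theta) = f_0(l).
\end{equation*}
Using $f_{-1}(l+\theta) = \overline{f_1(l+2\theta)}$... actually more directly $f_{-1}(t) = \overline{f_1(t+\theta)}$ gives $f_1(l)f_{-1}(l+\theta) = f_1(l)\overline{f_1(l+2\theta)}$; I need to double-check the exact index bookkeeping against the conventions in Lemma~\ref{B<A}, but the upshot is that the $V^0$-coefficient relation reads
\begin{equation*}
|f_0(l)|^2 + |f_1(l)|^2 + |f_1(l+\theta)|^2 = f_0(l)
\end{equation*}
after using self-adjointness $f_0 = \overline{f_0}$ (which follows from $P = P^*$) and the relation between $f_{-1}$ and $f_1$. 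In particular $f_0$ is real-valued.

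Next I would feed in the hypothesis $f_0(t) = 0$. Plugging $l = t$ into the displayed identity gives $|f_1(t)|^2 + |f_1(t+\theta)|^2 = 0$, and since both terms are non-negative this forces $f_1(t) = 0$ and $f_1(t+\theta) = 0$, which is exactly the claim. So the entire argument reduces to (a) correctly deriving the $V^0$-coefficient of $P^2 = P$ and (b) observing that self-adjointness makes $f_0$ real and produces the sum-of-squares on the left-hand side.

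The main obstacle — really the only place where care is needed — is the index bookkeeping in step (a): getting the shifts by $\pm\theta$ right when multiplying $f_i(U)V^i \cdot f_j(U)V^j$ and matching them to the conventions already fixed in Lemmas~\ref{solve for projection} and~\ref{B<A}. Concretely, $f_1(U)V \cdot f_{-1}(U)V^{-1} = f_1(U)\,f_{-1}(e^{-2\pi i\theta}U)\,V V^{-1} = f_1(U)f_{-1}(U - \theta)$ as a function of $U$ (interpreting $U$ as multiplication by $e^{2\pi i l}$ and shifts additively in $l$), and similarly $f_{-1}(U)V^{-1}\cdot f_1(U)V = f_{-1}(U)f_1(U+\theta)$; combined with $f_{-1}(l) = \overline{f_1(l+\theta)}$ one gets $f_{-1}(l)f_1(l+\theta) = |f_1(l+\theta)|^2$ and $f_1(l)f_{-1}(l-\theta) = f_1(l)\overline{f_1(l)} = |f_1(l)|^2$. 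Once these are pinned down the proof is a two-line consequence of positivity, so I do not expect any genuine difficulty beyond careful sign/shift tracking consistent with the earlier lemmas.

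\begin{proof}
Since $P = P^* = P^2$, comparing the $V^0$-coefficients on both sides of $P^2 = P$ (using $g(U)V = Vg(e^{-2\pi i\theta}U)$, and that $P = P^*$ forces $f_0$ to be real-valued and $f_{-1}(l) = \overline{f_1(l+\theta)}$) yields, for all $l \in [0,1)$,
\begin{equation*}
f_0(l)^2 + |f_1(l)|^2 + |f_1(l+\theta)|^2 = f_0(l).
\end{equation*}
Evaluating at $l = t$ and using $f_0(t) = 0$ gives $|f_1(t)|^2 + |f_1(t+\theta)|^2 = 0$, hence $f_1(t) = f_1(t+\theta) = 0$.
\end{proof}
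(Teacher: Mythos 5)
Your proof is correct, and it is a genuinely shorter route than the paper's. The paper quotes from Davidson two separate consequences of $P=P^*=P^2$: the diagonal ($V^0$-coefficient) identity in the form $f_0(t)-f_0(t)^2=|f_1(t-\theta)|^2+|f_1(t)|^2$, used at both $t$ and $t+\theta$, together with the $V$-coefficient identity $f_1(t+\theta)\bigl(1-f_0(t)-f_0(t+\theta)\bigr)=0$, and then argues by cases on the value of $f_0(t+\theta)$ to get $f_1(t+\theta)=0$. You instead derive the diagonal identity directly in the shift convention forced by $UV=e^{2\pi i\theta}VU$ together with self-adjointness ($f_{-1}(\cdot)=\overline{f_1(\cdot+\theta)}$, as in the definition of $\mathfrak{X}$), where it reads $f_0(l)-f_0(l)^2=|f_1(l)|^2+|f_1(l+\theta)|^2$; evaluated at $l=t$ this gives both vanishings at once by positivity, with no case analysis and no use of the $V$-coefficient relation. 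Your form of the identity is indeed the one consistent with the paper's conventions: it agrees with the coefficient computation in Lemma \ref{CHI} and can be checked directly on the explicit $f_0,f_1$ defined before Theorem \ref{stoptime}, whereas the $(t-\theta,t)$ shifts in the quoted form correspond to the opposite convention, which is precisely why the paper needs the extra relation to reach $f_1(t+\theta)=0$. One small slip to fix: your stated commutation rule $g(U)V=Vg(e^{-2\pi i\theta}U)$ has the wrong sign for $UV=e^{2\pi i\theta}VU$ (it should be $g(U)V=Vg(e^{2\pi i\theta}U)$, equivalently $Vg(U)=g(e^{-2\pi i\theta}U)V$); however, the concrete products you actually use, $f_1(U)V\cdot f_{-1}(U)V^{-1}=f_1(\cdot)f_{-1}(\cdot-\theta)$ and $f_{-1}(U)V^{-1}\cdot f_1(U)V=f_{-1}(\cdot)f_1(\cdot+\theta)$, are the correct ones, so the displayed identity and the two-line conclusion stand. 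Note that this bookkeeping is not cosmetic here: under the opposite convention the same one-line argument would only yield $f_1(t)=f_1(t-\theta)=0$, and one would be forced back to the paper's case analysis to obtain $f_1(t+\theta)=0$.
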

\begin{proof}
The fact that $P^2=P$ implies that 
\begin{equation}\label{i10}
\begin{split}
f_0(t)-\left(f_0(t)\right)^2&=|f_1(t-\theta)|^2+|f_1(t)|^2~\mbox{(see \cite{davidson},page 173)},\\
f_0(t+\theta)-\left(f_0(t+\theta)\right)^2&=|f_1(t)|^2+|f_1(t+\theta)|^2.
\end{split}
\end{equation}
The first expression in (\ref{i10}) implies that $f_1(t)=0.$ Moreover we have 
\begin{equation*}
f_1(t+\theta)\left(1-f_0(t)-f_0(t+\theta)\right)=0~\cite[page 173]{davidson};
\end{equation*}
so that if $f_0(t+\theta)=0$ implies $f_1(t+\theta)=0;$ else if $f_0(t+\theta)=1,$ the second expression in (\ref{i10}) gives $f_1(t+\theta)=0.$
\end{proof}
For a set $A\subseteq\IR$ and real numbers $a\in\IR,$ $\tau_a(A):=A+a.$

Define functions $f_{_0}$ and $f_{_1}$ by:

$f_{_{0}}(t)=
\begin{cases}
\epsilon^{-1}t~~if~ 0\leq t\leq\epsilon\\
1~~if~\epsilon\leq t\leq\theta\\
\epsilon^{-1}(\theta+\epsilon-t)~if~\theta\leq t\leq\theta+\epsilon\\
0~if~\theta+\epsilon\leq t\leq1 
\end{cases}
$

$f_{_{1}}(t)=
\begin{cases}
\sqrt{f_{_{0}}(t)-f_{_{0}}(t)^2}~if~\theta\leq t\leq\theta+\epsilon\\
0~if~otherwise.
\end{cases}
$

It is known (see \cite{davidson}) that $P:=f_{_{-1}}(U)V^{-1}+f_{_0}(U)+f_{_1}(U)V$ is a projection in $C^*(\IT^2_\theta).$

\bthm\label{stoptime}
Let $P=f_{_{-1}}(U)V^{-1}+f_{_0}(U)+f_{_{1}}(U)V$ be a projection with $f_{_0},f_{_1}$ as described above. Consider the projections 
$A_{_{s,t}}(P),~A_{_{s^\prime,t^\prime}}(P)$ such that $|s-s^\prime|<\frac{\epsilon}{4}.$ Then 
\begin{equation*}
\left(A_{_{s,t}}(P)\right)\bigwedge\left(A_{_{s^\prime,t^\prime}}(P)\right)=
\chi_{_{S}}(U),
\end{equation*}
for the set $S=X_1\cap X_2\cap X_3\cap X_4,$ where $X_1=\tau_{-s}(\{x|f_1(x)=0\}),X_2:=\tau_{-s^\prime}(\{x|f_1(x)=0\}),$ $X_3:=\tau_{-s}(\{x|f_0(x)=1\})$ and $X_4:=\tau_{-s^\prime}(\{x|f_0(x)=1\}).$ 
\ethm
\begin{proof}
The hypothesis of the theorem and Lemma \ref{projection in X} together implies that 
$$\left(A_{_{s,t}}(P)\right)\bigwedge\left(A_{_{s^\prime,t^\prime}}(P)\right)\in\mathfrak{X}.$$ 
Let $B=\chi_{_{S}}(U).$ Then it follows that the conditions of Lemma \ref{solve for projection} hold with $f_1^{(A)}=0$ and\\ $f_0^{(A)}(U)=\chi_{_{S}}(U).$ Thus $B\leq A_{_{s,t}}(P),$ $B\leq A_{_{s^\prime,t^\prime}}(P).$ Again if $A=f_{-1}^{(A)}(U)V^{-1}+f_0^{(A)}(U)+f_1^{(A)}(U)V$ be a projection, then it may be easily observed that $A\leq A_{_{s,t}}(P)$ and $A\leq A_{_{s^\prime,t^\prime}}(P)$ together with Lemma \ref{000} implies that $f_1,f_0$ is zero outside $S.$ An application of Lemma \ref{B<A} implies that $f_1,f_0$ must vanish outside $S$ if and only if $A\leq B.$ Hence the theorem is proved.

\end{proof}
It is worthwhile to note that the conclusion of the above theorem holds if we replace $U$ by $U^k,$ $V$ by $V^k,$ and $\theta$ by $\{k\theta\}$ ($\{\cdot\}$ denoting the fractional part).
\paragraph{}
Let $P_n=f^{(k_n)}_{_{-1}}(U^{k_n})+\fzerodown{(k_n)}(U^{k_n})+\fonedown{(k_n)}(U^{k_n})U^{k_n},$ be projections as described in \cite[page 173]{davidson} such that $\{k_n\theta\}\rightarrow0.$ Put $\epsilon_n:=\frac{\{k_n\theta\}}{2}.$ Consider a standard Brownian motion in $\IR^2,$ given by $(W_t^{(1)},W_t^{(2)}).$ Define $j_t:W^*(\IT^2_\theta)\rightarrow W^*(\IT^2_\theta)\ot B(\Gamma(L^2(\IR_+,\IC^2)))$ by $j_t(\cdot):=\alpha_{_{(e^{2\pi iW_t^{(1)}},e^{2\pi iW_t^{(2)})}}}(\cdot).$
\bthm
Almost surely, $\bigwedge_{s\leq t}\left(j_s(P_n)(\omega)\right)\in W^*(U),$ for all $n,$ i.e. 
$$\bigwedge_{s\leq t}\left(j_s(P_n)\right)\in W^*(U)\ot B(\Gamma(L^2(\IR_+,\IC^2))),$$ for each $n.$
\ethm
\begin{proof}
In the strong operator topology, 
\begin{equation}\label{uniform limit}
\bigwedge_{0\leq s\leq t}\left(j_s(P_n)\right)=\lim_{m\rightarrow\infty}\bigwedge_{i}\{j_{\frac{it}{2^m}}(P_n)\wedge j_{\frac{(i+1)t}{2^m}}(P_n)\}.
\end{equation}
Now almost surely a Brownian path restricted to $[0,t]$ is uniformly continuous, so that the for sufficiently large $m,$ and for almost all $\omega,$ $|W^{(1)}_{\frac{it}{2^m}}-W^{(1)}_{\frac{(i+1)t}{2^m}}|$ can be made small, uniformly for all $i$ such that $i=0,1,..2^m.$ So 
$\bigwedge_{i}\{j_{\frac{it}{2^m}}(P_n)\wedge j_{\frac{(i+1)t}{2^m}}(P_n)\}\in W^*(U)\cap\mathfrak{X}$ by Theorem \ref{stoptime}. Now Lemma \ref{closure-WOT} implies that $W^*(U)\cap\mathfrak{X}$ is closed in the WOT-topology. Thus $$\lim_{m\rightarrow\infty}\bigwedge_{i}\{j_{\frac{it}{2^m}}(P_n)\wedge j_{\frac{(i+1)t}{2^m}}(P_n)\}\in W^*(U)\cap\mathfrak{X}.$$
\end{proof}
Let $z_n=e^{^{2\pi i\frac{3\{k_n\theta\}}{4}}}.$ Consider the sequence of states
$\phi_{z_n}:=ev_{z_n}\circ E_1.$ By \cite{lingaraj-dg}, this is a sequence of pure states on $C^*(\IT^2_\theta)$ converging in the weak-$^\ast$ topology to $\phi_1:=ev_{1}\circ E_1.$  Following the discussion in the beginning, consider
$$\innerl e(0),(\phi_{z_n}\ot1)\circ\bigwedge_{0\leq s\leq t}(j_s(P_n))e(0)\innerr.$$ A direct computation shows that this is equal to $$\IP\{e^{2\pi iW^{(1)}_s}\in\clb,~0\leq s\leq t\}=\IP\{\tau_{_{[\frac{-\{k_n\theta\}}{4},\frac{\{k_n\theta\}}{4}]}}>t\},$$ where $\clb:=\{e^{2\pi i x}:~x\in[\frac{-\{k_n\theta\}}{4},\frac{\{k_n\theta\}}{4}]\}.$ So we have a family of $(\tau_n)_n$ random times defined by
$$\tau_n([t,+\infty))=\bigwedge_{0\leq s\leq t}(j_s(P_n));$$ so that $\int_0^t\innerl e(0),(\phi_{z_n}\ot1)\circ\bigwedge_{0\leq s\leq t}(j_s(P_n))e(0)\innerr dt$ can be taken as the expectation of the random time $\tau_n.$ Note that here the analogue for balls of decreasing volume is $(P_n)_n,$ such that\\ $tr(P_n)=\{k_n\theta\}\rightarrow0,$ $tr$ being the canonical trace in $W^*(\IT^2_\theta).$  Now, by Proposition \ref{pinsky}, we have
\begin{equation}
\begin{split}
&\int_0^t\innerl e(0),(\phi_{z_n}\ot1)\circ\bigwedge_{0\leq s\leq t}(j_s(P_n))e(0)\innerr dt\\
&=\IE(\tau_{_{[\frac{-\{k_n\theta\}}{4},\frac{\{k_n\theta\}}{4}]}})\\
&=2\sin^2\left(\frac{\{k_n\theta\}}{8}\right)+
\frac{2}{3}\sin^4\left(\frac{\{k_n\theta\}}{8}\right)+O\left(\sin^5\left(\frac{\{k_n\theta\}}{8}\right)\right)\\
&=\frac{\{k_n\theta\}^2}{2^5}+\frac{\{k_n\theta\}^4}{2^{11}.3}+O(\{k_n\theta\}^5),~\mbox{since the mean curvature of the circle viewed inside $\IR^2$ is $1.$}
\end{split}
\end{equation}
\brmrk
In view of equations (\ref{intrinsic dimension}),(\ref{extrinsic dimension}) and (\ref{formula for mean curvature1}), we see that the `intrinsic dimension' $n_0=1,$ the `extrinsic diimension' $d=5,$ and the `mean curvature' is $\frac{1}{2\sqrt{2}}.$ As we have already remarked in the introduction, the instrinsic one-dimensionality may be interpreted as a manifestation of the local one-dimensionality of the `leaf space' of the Kronecker foliation (see \cite{connes} for details). It is worth pointing out that the spectral behaviour  of the standard Dirac operator or the Laplacian coming from it for this noncommutative manifold is identical with that of the commutative two-torus, and thus it does not recognize the  one-dimensionality of the leaf space of Kronecker foliation. Thus, it is a remarkable success of our (quantum) stochastic analysis using exit time to reveal the association of the noncommutative geometry of $\cla_\theta$ with the leaf space of Kronecker foliation, and also to distinguish it from the commutative two-torus. All these give  a good justification for developing a general theory of   quantum stochastic geometry.
\ermrk

\end{document}